\newcommand{\1}[1]{{\mathbf 1}_{\{#1\}}}
\newcommand{\vr}{\varrho}
\newcommand{\eps}{\varepsilon}
\newcommand{\Z}{{\mathbb Z}}
\newcommand{\C}{{\mathbb C}}
\newcommand{\B}{{\mathsf B}}
\newcommand{\V}{{\mathcal V}}
\newcommand{\X}{{\mathcal X}}
\newcommand{\D}{{\mathcal D}}
\newcommand{\R}{{\mathbb R}}
\newcommand{\RR}{{\mathcal R}}
\newcommand{\RI}{\mathop{\mathrm{RI}}}
\newcommand{\BRI}{\mathop{\mathrm{BRI}}}
\newcommand{\s}{{\widehat S}}
\newcommand{\I}{{\mathcal I}}
\newcommand{\QQ}{{\mathcal Q}}
\let\phi=\varphi
\newcommand{\E}{{\mathbb E}}
\newcommand{\ha}{{\hat a}}
\newcommand{\hb}{{\hat b}}
\newcommand{\hc}{{\hat c}}
\newcommand{\tX}{{\widetilde X}}
\newcommand{\tZ}{\widetilde{Z}}
\newcommand{\diam}{{\mathop{\mathrm{diam}}}}
\newcommand{\8}{{\infty}}
\newcommand{\6}{{\partial}}
\newcommand{\nn}{{\nonumber}}
\newcommand{\eqlaw}{\stackrel{\text{\tiny law}}{=}}
\newcommand{\convlaw}{\stackrel{\text{\tiny law}}{\longrightarrow}}
\newcommand{\IP}{{\mathbb P}}
\newcommand{\IE}{{\mathbb E}}
\newcommand{\Cor}{\mathop{\mathrm{Cor}}}
\DeclareMathSymbol{\widehatsym}{\mathord}{largesymbols}{"62}
\newcommand\lowerwidehatsym{%
  \text{\smash{\raisebox{-1.3ex}{%
    $\widehatsym$}}}}
\newcommand\fixwidehat[1]{%
  \mathchoice
    {\accentset{\displaystyle\lowerwidehatsym}{#1}}
    {\accentset{\textstyle\lowerwidehatsym}{#1}}
    {\accentset{\scriptstyle\lowerwidehatsym}{#1}}
    {\accentset{\scriptscriptstyle\lowerwidehatsym}{#1}}
}
\newcommand{\hW}{\widehat{W}}
\newcommand{\hZ}{\widehat{Z}}
\newcommand{\JJ}{{\mathfrak J}}
\newcommand{\capa}{\mathop{\mathrm{cap}}}
\newcommand{\Var}{\mathop{\mathrm{Var}}}
\newcommand{\hm}{\mathop{\mathrm{hm}}\nolimits}
\newcommand{\hhm}{\mathop{\widehat{\mathrm{hm}}}\nolimits}
\newcommand{\dist}{\mathop{\mathrm{dist}}}
\newcommand{\htau}{\widehat{\tau}}
\newtheorem{theo}{Theorem}[section]
\newtheorem{lem}[theo]{Lemma}
\newtheorem{df}[theo]{Definition}
\newtheorem{prop}[theo]{Proposition}
\newtheorem{cor}[theo]{Corollary}
\newtheorem{rem}[theo]{Remark}
\numberwithin{equation}{section}
\title{Two-dimensional Brownian random interlacements}
\author{Francis Comets$^{1}$ \and
 Serguei~Popov$^{2}$}
\begin{document}

\maketitle

% \boxed{21/02/17, {\rm modifs\ jusqu'a\ sect.} 2.2}
% \bigkskip

{\footnotesize 
\noindent $^{~1}$Universit\'e Paris Diderot. Math\'ematiques, 8 place Aur\'elie Nemours, 75013 Paris, France. \\
 Laboratoire de Probabilit\'es, Statistique et Mod\'elisation (LPSM), UMR 8001 CNRS-UPD-SU.
\\
\noindent e-mail:
\texttt{comets@lpsm.paris}

\noindent $^{~2}$Department of Statistics, Institute of Mathematics,
 Statistics and Scientific Computation, University of Campinas --
UNICAMP, rua S\'ergio Buarque de Holanda 651,
13083--859, Campinas SP, Brazil\\
\noindent e-mail: \texttt{popov@ime.unicamp.br}

}

\begin{abstract}
We introduce the model of two-dimensional continuous random
 interlacements, which is constructed using the Brownian
trajectories \emph{conditioned} on not hitting a fixed set 
(usually, a disk). This model yields the local picture of Wiener sausage on the torus around a late point. As such, it can be seen as a continuous analogue
of discrete two-dimensional random interlacements~\cite{CPV16}.
At the same time, one can view it 
as (restricted) Brownian loops through infinity.
We establish a number of results analogous to these of~\cite{CP16,CPV16},
as well as the results specific to the continuous case.
%These include the rescaling properties and analysis
%of size and shape of uncovered cells.
\\[.3cm]\textbf{Keywords:} Brownian motion, conditioning,
transience, Wiener moustache, logarithmic capacity, Gumbel process.
%Brownian amoeba
\\[.3cm]\textbf{AMS 2010 subject classifications:}
Primary: 60J45; Secondary: 60G55, 60J65, 60K35.

\end{abstract}

{\small
\tableofcontents
}

\section{Introduction}
\label{s_intro}
The model of  random interlacements in dimension $d\geq 3$ has been introduced in the discrete setting in~\cite{Szn10}  to describe the local picture of the trace left by the random walk on a large 
torus at a large time. 
It consists in a Poissonian soup of (doubly infinite) 
random walk paths modulo time-shift,  which is a natural and general 
construction~\cite{LeJanSF}. 
It has soon attracted the interest of the community, 
the whole field has now come to a maturity, as can be seen 
in two books~\cite{CT12,DRS14} dedicated to the model.

 In the continuous setting and dimension $d\geq 3$, 
Brownian  random interlacements
bring the similar limit description for the Wiener sausage 
on the torus~\cite{LS15,Szn13}.  
Continuous setting is very appropriate to capture the geometrical properties of the sausage in their full  complexity, 
see~\cite{GdH14}. We also mention the ``general'' 
definition~\cite{R14} of continuous RI,
and an alternative definition of Brownian interlacements
 via Kuznetsov measures~\cite{DD15}.

The model of two-dimensional (discrete) random interlacements was
introduced in~\cite{CPV16}.  (For completeness, we mention here the 
one-dimensional case that has been studied in~\cite{CDarcyP16}.)
The point is that, in two dimensions, even a single trajectory
of a simple random walk is space-filling, so the existence 
 of the model 
has to be clarified together with its  meaning.
To define the process in a meaningful way on the planar lattice, 
one uses the SRW's trajectories \emph{conditioned}
on never hitting the origin, and it gives the local limit of the trace 
of the walk on a large torus at a large time given that the origin has 
not been visited so far. 
We mention also the recent alternative definition~\cite{Rodr17} of the 
two-dimensional  random interlacements in the spirit of thermodynamic limit.
A sequence of finite volume approximation is introduced, consisting in 
random walks killed with intensity depending on the spatial scale
and restricted to paths avoiding the origin. 
Two versions are presented, using
Dirichlet boundary conditions outside a box  or by killing the walk 
at an independent exponential random time.

The interlacement gives the structure of late points and accounts for their clustering studied in~\cite{DPRZ06}. 
They convey information on the cover time, for which the LLN was 
obtained in~\cite{DPRZ04}. 
Let us mention recent results for the cover time in two dimensions in the continuous case: 
(i) computation of the second order correction 
to the typical cover time on the 
torus~\cite{BK14}, 
(ii) tightness of the cover time relative to its median 
for Wiener sausage on the Riemann sphere~\cite{BRZ17}.  
%Cover process: , DPRZ 2006 late points, GRZ 2017, GdH 2015.
%Tightness for the Cover Time of $S^2$, \cite{BRZ17}. Belius, Rosen, Zeitouni: 
%$\sqrt {\tau_\epsilon}-2\sqrt2 (\ln \epsilon^{-1} -(1/4) \ln_2 \epsilon^{-1})$ tight.
\medskip

In this paper we define the two-dimensional Brownian random interlacements,
implementing the program of~\cite{CPV16} in the continuous setting;
similarly to the discrete case, they are made of \emph{conditioned}
(on not hitting the unit disk) Brownian paths.
Again, similarly to the discrete case, it holds that 
the Brownian random interlacements arise as a limit of the picture
seen from a fixed point on the torus, given that this point
remains uncovered by the Brownian sausage.
In fact, we find that the purity of the concepts and the geometrical 
interest of the interlacement and of its vacant set are uncomparably
stronger here. From a different perspective, we also obtain fine 
properties of Brownian loops through infinity
%, cf.\ Proposition~\ref{p_Lawler_werner} 
which shows that the two models are equivalent by inversion in 
the complex plane.

%% \textbf{By the way, shall we use $\R^2$, $\C$, or both?}
%
%Say that we are motivated also by the questions about microscopic
%structure of the uncovered set with respect to Brownian
%motion's trajectory, on a plane or on a two-dimensional
%torus, see e.g.\ \cite{HNPS16,Wer94}.
%
% Using another point of view, we may say that
% this paper is devoted to a study of some fine properties
%of Brownian loops through infinity etc.,
%cf.\ Proposition~\ref{p_Lawler_werner} which shows that 
%the two models are equivalent.

%
%% cf talks
%
%

For our purpose, we introduce the Wiener moustache, which is the doubly 
infinite path used in the soup. The Wiener moustache is obtained by pasting
two independent Brownian motions conditioned to stay outside the unit
disk starting from a random point on the circle.
The conditioned  Brownian motion is itself an interesting object,
it seems to be overlooked in spite of the extensive literature on
the planar Brownian motion, cf.~\cite{LeGall}.
It is defined as a
Doob's $h$-transform, see Chapter X of Doob's book~\cite{Doob84} about conditional Brownian motions; see also~\cite{Doob57} for an earlier approach via transition semigroup.
A modern exposition of methods and difficulties in conditioning
a process by a negligible set is~\cite{RoynetteYor}.
For a  perspective from non equilibrium physics, 
see~\cite[Section 4]{ChetriteTouchette}.

We also study the process of distances of a point to the BRI as 
the level increases. After a non-linear transformation in time 
and space, the process %exhibits aging with a limit
has a large-density limit given by a pure jump process with a drift. 
The limit is stationary with the negative of a Gumbel distribution 
for invariant measure, and it is in fact related to models 
for congestion control on Internet (TCP/IP), 
see~\cite{baccelli07, baccelli09}.

%%%%%%%%%%
\section{Formal definitions and results}
\label{s_defs_results}
In Section~\ref{s_moustache}
we discuss the Brownian motion conditioned on never returning
to the unit disk (this is the analogue of the walk~$\s$ in the discrete
case, cf.~\cite{CP16,CPV16}), 
and define an object called \emph{Wiener moustache},
which will be the main ingredient for constructing the 
Brownian random interlacements in two dimensions. 
In Section~\ref{s_df_BRI}
we formally define the Brownian interlacements, and 
in Section~\ref{s_results} we state our main results.

In the following, we will identify~$\R^2$ and~$\C$ 
via $x=(x_1,x_2)=x_1+i x_2$,
$\|\cdot\|$ will denote the Euclidean norm
in~$\R^2$ or $\Z^2$ as well as the modulus in~$\C$, 
and let $\B(x,r)=\{y: \|x-y\|\leq r\}$ be the 
(closed) disk of radius~$r$ centered in~$x$,
and abbreviate $\B(r):=\B(0,r)$.

\subsection{Brownian motion conditioned on staying outside a disk, 
and Wiener moustache}
\label{s_moustache}

Let $W$ be a standard two-dimensional Brownian motion.
For $A\subset \R^2$ define the stopping time
\begin{equation}
 \label{entrance_W}
 \tau(A) = \inf\{t> 0: W_t\in A\},
\end{equation}
and let us use the shorthand $\tau(r):=\tau\big(\partial \B(r)\big)$.
It is well known that $h(x)=\ln \|x\|$ is a fundamental solution of Laplace equation, 
%%% voir http://djalil.chafai.net/blog/2014/03/09/de-la-vallee-poussin-on-uniform-integrability/
\begin{equation}
\nn
\frac{1}{2} \Delta h = \pi \delta_0,
\end{equation}
% attention au signe, et au facteur 1/2
with $\delta_0$ being the Dirac mass at the origin
and 
% $\Delta=\frac{\partial^2}{\partial_{x_1}^2}
% +\frac{\partial^2}{\partial_{x_2}^2}$ 
$\Delta={\partial_{x_1}^2}+{\partial_{x_2}^2}$ 
%$\Delta=(\partial^2/{\partial{x_1}^2})+(\partial^2/{\partial{x_2}^2})$
 the Laplacian.
 
An easy consequence of~$h$ being harmonic away from the origin 
and of the optional stopping theorem is that,
for any $0<a<\|x\|<b<\infty$, 
\begin{equation}
\label{hitting_BM}
 \IP_x[\tau(b)<\tau(a)] = \frac{\ln (\|x\|/a)}{\ln (b/a)}\;.
\end{equation}
Since $h$ is non-negative outside the ball $\B(1)$ and vanishes 
on the boundary, a further consequence of harmonicity is that, under $\IP_x$
for $\|x\| > 1$, 
\begin{equation}\nn
 \1{t <\tau(1)} \frac{h(W_t)}{h(x)} \equiv  \frac{h (W_{t\wedge\tau(1)})}{h(x)}
\end{equation}
is a non-negative martingale with mean 1. Thus,
the formula
\begin{equation}
\label{def:hat_W} 
\IP_x \big[ \hW \in A \big] 
=\E_x \Big(\1{W \in A} \1{t <\tau(1)} \frac{h(W_t)}{h(x)} \Big)\;,
\end{equation}
for all $t >0$, for $A \in {\cal F}_t$ ($\sigma$-field generated by the evaluation maps at times $\leq t$ in $ {\cal C}(\R_+, \R^2 )$),
 defines a continuous process $\hW$ on $[0,\8)$ starting at $x$ and taking  values in $\R^2 \setminus \B(1)$.

The process defined as the Doob's $h$-transform of the standard
Brownian motion by the function $h(x)=\ln\|x\|$,  can be seen as the \emph{Brownian motion conditioned on never
hitting}~$\B(1)$, as it appears in  Lemma~\ref{l_conn_W_hW}  below.
Similarly to~\eqref{entrance_W}, we define
\begin{equation}
 \label{entrance_hW}
 \htau(A) = \inf\big\{t> 0: \hW_t\in A\big\}
\end{equation}
and use the shorthand 
$\htau(r):=\htau\big(\partial \B(r)\big)$.
For a $\R^2$-valued process $X=(X_t, t\geq 0)$ 
we will distinguish its geometric range~$X_{\I}$  %(trace? image?) 
on some time interval $\I$ from its restriction $X_{\big|\I}$,
\begin{equation} 
\label{eq:distinguish}
X_{\I} = \bigcup_{t \in \I} \{X_t\}\;,
\qquad X_{\big \vert \I}: t \in \I \mapsto X_t\;. 
\end{equation} 
\begin{lem}
\label{l_conn_W_hW}
 For all~$R>1$ and all $x\in\R^2$ such that $1<\|x\|<R$ we have
\begin{equation}
\label{eq_conn_W_hW}
\IP_x\Big[W_{\big \vert [0,\tau(R)]}
\in\cdot \mid \tau(R)<\tau(1)\Big] 
  = \IP_x \Big[\hW_{\big \vert [0,\htau(R)]}\in\cdot\,\Big].
\end{equation}
\end{lem}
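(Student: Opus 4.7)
The plan is to derive \eqref{eq_conn_W_hW} directly from the definition \eqref{def:hat_W} of the $h$-transformed process, together with the escape probability \eqref{hitting_BM}, by an optional stopping argument at $\tau(R)$.

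First, I would fix a bounded measurable functional $F$ on the path space $\cCC([0,\8),\R^2)$ that depends only on the trajectory up to its first hitting of $\6\B(R)$ (formally, $F$ is measurable with respect to the stopped $\sigma$-field at $\tau(R)$, resp.\ $\htau(R)$). The goal is then to check the identity
\begin{equation}\nn
 \E_x\bigl[F(\hW)\bigr]
 = \E_x\bigl[F(W)\mid \tau(R)<\tau(1)\bigr]\;.
\end{equation}
Since $h(\cdot)=\ln\|\cdot\|$ is harmonic on $\R^2\setminus\{0\}$ and vanishes on $\6\B(1)$, the process $M_t := \1{t<\tau(1)} h(W_t)/h(x)$ is a non-negative martingale under $\IP_x$ for $\|x\|>1$, as noted before \eqref{def:hat_W}. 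On the stopped filtration at $\tau(R)\wedge t$, this martingale is bounded by $(\ln R)/(\ln\|x\|)$, since $h$ is bounded on $\B(R)\setminus\B(1)$.

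Next I would apply the optional sampling theorem at the bounded stopping times $\tau(R)\wedge n$ to the martingale $M$, combined with the defining identity \eqref{def:hat_W}. Because $W$ is recurrent in $\R^2$ we have $\tau(R)<\8$ almost surely, and on the event $\{\tau(R)<\tau(1)\}$ one has $h(W_{\tau(R)})=\ln R$; on the complementary event $\{\tau(1)<\tau(R)\}$, the indicator $\1{\,\cdot\,<\tau(1)}$ kills the contribution. Passing to the limit $n\to\8$ via bounded convergence yields
\begin{equation}\nn
 \E_x\bigl[F(\hW)\bigr]
 \;=\; \E_x\!\left[F(W)\,\1{\tau(R)<\tau(1)}\,\frac{h(W_{\tau(R)})}{h(x)}\right]
 \;=\; \frac{\ln R}{\ln\|x\|}\;\E_x\!\left[F(W)\,\1{\tau(R)<\tau(1)}\right].
\end{equation}
Finally, I would invoke \eqref{hitting_BM} in the form $\IP_x[\tau(R)<\tau(1)] = \ln\|x\|/\ln R$ to identify the prefactor $(\ln R)/(\ln\|x\|)$ as $1/\IP_x[\tau(R)<\tau(1)]$, which delivers exactly the conditional expectation on the right-hand side of \eqref{eq_conn_W_hW}. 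A monotone-class argument then promotes this identity from bounded functionals $F$ to arbitrary events in the law of the stopped path.

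The main obstacle is the optional stopping step: the stopping time $\tau(R)$ is not bounded, and \eqref{def:hat_W} is stated only for deterministic times $t$. The remedy is the localization $\tau(R)\wedge n$ together with the uniform bound $M_{\cdot\,\wedge\tau(R)}\le (\ln R)/(\ln\|x\|)$, which makes bounded convergence applicable; one has to verify carefully that on $\{\tau(1)<\tau(R)\}$ the martingale $M$ has already been set to zero by its indicator factor, so that no boundary contribution from $\6\B(1)$ appears in the limit. Everything else is bookkeeping based on harmonicity of $h$ away from $0$.
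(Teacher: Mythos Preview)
Your proposal is correct and follows essentially the same route as the paper: apply the defining change of measure \eqref{def:hat_W} at the stopping time $\tau(R)$, use $h(W_{\tau(R)})=\ln R$ on $\{\tau(R)<\tau(1)\}$, and identify the normalizing factor via \eqref{hitting_BM}. The only difference is that you spell out the optional-stopping localization $\tau(R)\wedge n$ and the bounded-convergence passage to the limit, whereas the paper simply asserts the identity at $\tau(R)$ directly.
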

%%%
\begin{proof} 
  From~\eqref{def:hat_W} it follows that for $A \in {\cal F}_{\tau(R)}$, 
\begin{align*}
\IP_x \big[ \hW \in A \big] &=\E_x \Big(\1{W \in A} \1{\tau(R) <\tau(1)} \frac{h(W_{\tau(R)})}{h(x)} \Big)\\
&=\E_x \Big(\1{W \in A} \1{\tau(R) <\tau(1)} \Big) \frac{h(R)}{h(x)} 
\;,
\end{align*}
 which is the desired equality in view of~\eqref{hitting_BM}.
\end{proof}
%%%%%
 From~\eqref{def:hat_W} we derive 
 the transition kernel of $\hW$: for  $\|x\|>1, \|y\|\geq 1$, 
\begin{equation}
\label{df_hat_p}
 {\hat p}(t,x,y) = p_0(t,x,y)\frac{\ln\|y\|}{\ln\|x\|} .
\end{equation}
where~$p_0$ denotes
the transition subprobability density of~$W$ killed on hitting
 the unit disk~$\B(1)$. 
%
% 
% 
% 
% 
%
%
%From~\eqref{df_hat_p},
Thus,
the semigroup $\fixwidehat{P}_t$ of the process~$\hW$ is given by
\begin{equation*}
\fixwidehat{P}_t f (x) = h(x)^{-1} P^0_t\big(hf\big)(x) ,
\end{equation*}
 for bounded functions $f$ vanishing on $\B(1)$,
where $P^0_t= e^{(t/2)\Delta_0}$ with~$\Delta_0$ being
the Laplacian with Dirichlet boundary conditions
on $\R^2 \setminus \B(1)$. From the above formula we compute 
its generator,
\begin{align} \nn
\fixwidehat{L}f(x) &=  \lim_{t \searrow 0} t^{-1} 
\big[  \fixwidehat{P}_t f (x) - f(x) \big]  \\ \nn
 &=  h(x)^{-1} \lim_{t \searrow 0} t^{-1} 
\big[ P^0_t (hf) (x) - (hf)(x) \big]  \\ 
 &= \frac{1}{2 h(x)}  \Delta \big(hf\big)(x) 
 \label{generatorsym} \\ 
&=
\frac{1}{2} \Delta f + \frac{x}{\|x\|^2\ln \|x\|} \cdot \nabla f ,
 \label{generator}
\end{align}
using $\Delta_0 (hf)=\Delta (hf)$ and
harmonicity of~$h$.  
Thus the diffusion~$\hW$
obeys the stochastic differential equation
\begin{equation}
\label{differential_W}
 d\hW_t = \frac{\hW_t}{\|\hW_t\|^2\ln \|\hW_t\|} dt
    +dW_t.
\end{equation}
Sometimes it will be useful to consider an alternative definition of 
the diffusion~$\hW$ using polar coordinates, 
$\hW_t = (\RR_t\cos \Theta_t, \RR_t\sin \Theta_t)$. With~$W^{(1,2)}$
two independent standard linear Brownian motions, consider the stochastic differential equations
\begin{align}
 d \RR_t &= \Big(\frac{1}{\RR_t\ln \RR_t} + \frac{1}{2\RR_t}\Big)dt 
 + dW_t^{(1)},
   \label{df_Rt} \\ 
\intertext{and}    
\label{df_Theta_t}
d \Theta_t &=  \frac{1}{\RR_t} dW_t^{(2)}
\end{align}
where the diffusion~$\Theta$ takes values on the whole~$\R$;
it is an easy exercise in stochastic calculus to show
that~\eqref{differential_W} is equivalent 
to~\eqref{df_Rt}--\eqref{df_Theta_t}. 
%In other words, we regard the complex-valued Brownian motion
%as a projection of the Brownian motion on the Riemann surface.
%%The advantage of this definition is that the complex-valued 
%%power function is more naturally defined for this (generalized)
%%polar representation: 
%%\begin{equation}
%%\label{df_power}
%% \text{for }r>0 \text{ and } \theta\in\R, \quad (r,\theta)^\lambda 
%% := (r^\lambda, \lambda\theta);
%%\end{equation}
%%so, one can naturally speak about 
%%the image of the $\hW$-trajectory

Since the norm of the 2-dimensional Brownian motion~$W$ 
is a BES$^2$ process and~$\hW$ is~$W$ conditioned to
 have norm larger than~$1$ (recall Lemma~\ref{l_conn_W_hW}),
the process $\RR=\|\hW\|$ is itself a BES$^2$ conditioned 
on being in~$(1,\8)$. 
For further use, we give an alternative proof of this fact. 
The BES$^2$ process 
has generator and scale function\footnote{Recall that the scale 
function of a one-dimensional diffusion is a strictly monotonic 
function~$s$ such that, for all $a<x<b$, the probability 
starting at~$x$ to exit interval $[a,b]$ to the right is equal
 to $(s(x)-s(a))/(s(b)-s(a))$.}
 given by
\begin{equation} 
\label{eq:BES2}
L_{{\rm BES}^2}f(r)= \frac{1}{2r} 
\big( rf'(r)\big)'\;,\qquad s(r)= \ln r.
\end{equation}
Following Doob~\cite{Doob57}, the infinitesimal generator of 
BES$^2$ conditioned on being in~$[1, \8)$ is 
\begin{equation}
 \label{eq:BES2cond}
[s \! - \! s(1)]^{-1} L_{{\rm BES}^2}
\big( [s \! - \! s(1)] f \big) 
= \frac{1}{2} \Big(f'' + \Big(\frac{1}{r\ln r} 
+ \frac{1}{2r}\Big) f'\Big) ,
\end{equation}
which coincides with the one of the process~$\RR$.
%%%
% \medskip

It is elementary to obtain from~\eqref{df_Rt} that 
\begin{equation} \nn
% \label{eq_(ln_R)-1_mart}
   d\frac{1}{\ln \RR_t} = \frac{-1}{\RR_t\ln^2 \RR_t} dW_t^{(1)},
\end{equation}
so $(\ln \RR_t)^{-1} = \big(\ln\|\hW_t\|\big)^{-1}$ is a local martingale. 
(Alternatively, this can be seen directly from~\eqref{df_hat_p}.)

We will need to consider  the process~$\hW$ 
starting from $\hW_0=w$ with unit norm. 
Since the definition~\eqref{differential_W} makes sense only when 
starting from $w, \|w\|>1$, we now extend it to the case when $\|w\|=1$.
Consider $X$ a 3-dimensional Bessel process 
BES$^3(x)$, i.e., the norm of a 
3-dimensional Brownian motion starting form a 
point of norm~$x$. It solves the SDE
\begin{equation} \nn
%\label{eq:bessel3}
dX_t= \frac{1}{X_t} dt + d B_t , \qquad X_0=x \geq 0, 
\end{equation}
with $B$ a 1-dimensional Brownian motion on some probability
space $(\Omega, {\mathcal A}, P)$. Denoting by ${\mathcal F}_t$ 
the $\sigma$-field generated by $B$ up to time $t$,
we consider the probability measure $Q$ on ${\mathcal F}_\8$
given by ${\frac{d Q}{d P}}\big|_{{\mathcal F}_t}= Z_t, t \geq 0$, 
where
\begin{equation}
\label{eq:Z_t}
Z_t = \exp \Big( \int_0^t \phi (X_s) dB_s - \frac{1}{2}
\int_0^t \phi(X_s)^2 ds \Big),
\end{equation}
with
\begin{equation} \nn
\phi (x) = 
\begin{cases}
 \displaystyle\frac{1}{2(1+x)} 
   + \displaystyle\frac{1}{(1+x) \ln (1+x)} 
- \displaystyle\frac{1}{x},
  & \text{ if }   x >0, \\
%\displaystyle\frac{1}{2},
1/2, \phantom{\int\limits_a^B}
  & \text{ if }  x=0.  
\end{cases}
\end{equation}
Then, $\phi(X_t)$ is progressively measurable and bounded, 
so~$Z_t$ is a $P$-martingale. By Girsanov theorem, we see that 
\[
 \tilde B_t = B_t - \int_0^t \phi(X_s)\, ds
\]
is a Brownian motion under $Q$. Now, the integral formula
\begin{equation} \nn
X_t = X_0 + \int_0^t  \left( \frac{1}{2(1+X_s)} + 
\frac{1}{(1+X_s) \ln (1+X_s)} \right) ds + {\tilde B}_t ,
\qquad t \geq 0, 
\end{equation}
and uniqueness of the solution of the above SDE
show that the law of~$1+X$ under~$Q$ is the law of~$\RR$ under~$P$, 
provided that $X_0>0$.  
%%%%

\begin{df}
\label{df_RR}
We define the process~$\RR$ starting from
 $\RR_0=1$ in the following way:
it has the same law as $1+X$
under~$Q$ with $X_0=0$. Similarly, we define the law of the process~$\hW$
starting from $w \in \R^2$ with unit norm 
as the law of $(\RR_t, \Theta_t)_t$ 
with~$\RR$ as above and~$\Theta$ given 
by its law
conditional on~$\RR$ as in \eqref{df_Theta_t}. 
\end{df}

Then, the process $\RR$ (respectively, $\hW$)
 is the limit as $\eps\to 0$ of processes started 
from $1+\eps$ 
(respectively, from $(1+\eps)w $).
  This follows from the identities
\begin{equation} \nn
\E f(\RR_t) = E^Q f(1+X_t) = E^P \big[ Z_t f(1+X_t) \big] ,
\end{equation}
with $Z_t$ from \eqref{eq:Z_t} and $X$ depending 
continuously on its initial condition $X_0\geq 0$.

Let us mention some elementary but useful reversibility and scaling
properties of~$\hW$. 
%%%%
\begin{prop} 
\label{prop:elemhW}
\begin{itemize}
 \item[(i)] The diffusion~$\hW$ is reversible for the measure 
with density $\ln^2 \|x\|$ with respect to the Lebesgue measure 
on $\R^2 \setminus \B(1)$. 
 \item[(ii)] The diffusion~$\RR$ is reversible for the measure 
$r \ln^2 r \, dr$  on $(1,\8)$. 
 \item[(iii)] Let $1 \leq a \leq \|x\|$. 
Then, the law of~$\hW$ starting from~$x$ conditioned on never 
hitting~$\B(a)$ is equal to the law of the 
process $(a \hW(ta^{-2}); t \geq 0)$ with~$\hW$ starting from~$x/a$.
 \item[(iv)] For $1 \leq a \leq r$, the law of~$\RR$ 
 starting from~$r$ conditioned on staying strictly greater than~$a$
 is equal to that of $(a \RR( a^{-2}t))_{t \geq 0}$ with $\RR_0=r/a$. 
\end{itemize}
\end{prop}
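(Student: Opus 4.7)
The plan is to treat (i) and (ii) via the Doob $h$-transform structure of $\hW$ and $\RR$, and to derive (iii) and (iv) from a composition-of-conditionings identity combined with Brownian scaling.

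For (i), I would start from the identity $\hat L f = h^{-1}\cdot\tfrac12\Delta(hf)$ in \eqref{generatorsym}, with $h(x)=\ln\|x\|$. For smooth $f,g$ with compact support in $\R^2\setminus\B(1)$, Green's identity yields
\[
\int f\,\hat L g \cdot h^2\,dx \;=\; \tfrac12\int (hf)\,\Delta(hg)\,dx \;=\; \tfrac12\int (hg)\,\Delta(hf)\,dx \;=\; \int g\,\hat L f \cdot h^2\,dx,
\]
the boundary integrals on $\partial\B(1)$ vanishing since $h\equiv 0$ there; the symmetry of $\hat L$ in $L^2(h^2\,dx)$ is the claimed reversibility. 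For (ii), the same recipe applies in one dimension: by \eqref{eq:BES2cond} the generator of $\RR$ is the conjugate of $L_{{\rm BES}^2}$ by $\ln r$, while ${\rm BES}^2$ is itself reversible for its speed measure $r\,dr$ on $(0,\8)$ (as read off from \eqref{eq:BES2}), so conjugation multiplies the reversible measure by $\ln^2 r$, giving $r\ln^2 r\,dr$. Alternatively, (ii) follows from (i) by integrating in the angular variable, since the pushforward of $\ln^2\|x\|\,dx$ under $x\mapsto\|x\|$ is proportional to $r\ln^2 r\,dr$.

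For (iii), I would proceed in two steps. Step one is a composition-of-conditionings identity, in the spirit of Lemma~\ref{l_conn_W_hW}. For $1\leq a<\|x\|<R$, on the event $\{\tau(R)<\tau(a)\}$ one automatically has $\tau(R)<\tau(1)$ and $\|W_{\tau(R)}\|=R$, so substituting into \eqref{def:hat_W} and normalizing yields
\[
\IP_x\bigl[\hW_{\big\vert[0,\htau(R)]}\in\,\cdot \,\bigm|\,\htau(R)<\htau(a)\bigr]
\;=\;\IP_x\bigl[W_{\big\vert[0,\tau(R)]}\in\,\cdot \,\bigm|\,\tau(R)<\tau(a)\bigr].
\]
Letting $R\to\8$, both sides converge (using the explicit hitting probability $\IP_y[\tau(R)<\tau(a)]=\ln(\|y\|/a)/\ln(R/a)$) to the law of the Doob $h_a$-transform of $W$, with $h_a(x)=\ln(\|x\|/a)$, that is, of Brownian motion conditioned never to hit $\B(a)$. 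Step two is Brownian scaling: the map $W_t\mapsto aW_{t/a^2}$ carries ``start at $x/a$ and avoid $\B(1)$'' to ``start at $x$ and avoid $\B(a)$'', so the process $(a\hW(ta^{-2}))_{t\geq 0}$ with $\hW_0=x/a$ is also the $h_a$-transform of $W$ starting at $x$. Equating the two representations gives (iii). Statement (iv) follows either by taking norms in (iii), or by rerunning the same two-step argument for $\RR$ using the ${\rm BES}^2$ scale function $\ln r$ together with the scaling $\RR_t\mapsto a\RR(ta^{-2})$.

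The main delicate point I anticipate is the passage $R\to\8$ in step one of (iii): one has to check convergence of the finite-$R$ conditioned laws to the $h_a$-transform law on cylinder events, the conditioning event $\{\tau(a)=\8\}$ itself being null. This is routine given the explicit exit probabilities above, but it is the only place where a genuine limiting argument enters; the rest is either symmetry of an elliptic operator (for (i)--(ii)) or an exact scaling identity (for the scaling part of (iii)--(iv)).
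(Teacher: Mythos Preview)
Your proposal is correct and follows essentially the same approach as the paper. For (i)--(ii) you use the $h$-transform structure and integration by parts to exhibit the symmetry of the generator, exactly as the paper does (the paper writes the Dirichlet form $-\int f\,\hat L g\,h^2\,dx=\tfrac12\int\nabla(hf)\cdot\nabla(hg)\,dx$, and for (ii) puts $L_\RR$ in divergence form $L_\RR f=\tfrac{1}{2r\ln^2 r}(f'r\ln^2 r)'$). For (iii)--(iv) your two-step argument---composition of conditionings plus Brownian scaling---matches the paper's proof; the paper is simply more terse, asserting in one line that ``$W$ conditioned on never hitting $\B(a)$ has the same law as $\hW$ starting from $x$ conditioned on never hitting $\B(a)$'' without spelling out the finite-$R$ identity and limit that you make explicit.
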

%%%%
\begin{proof} For smooth $f, g : \R^2 \setminus \B(1) \to \R$, we have
\[
\hat{\varepsilon}(f,g):=
- \int_{ \R^2 \setminus \B(1) } f(x) 
\fixwidehat{L}g(x) h^2(x) \, dx \stackrel{}{=} \frac{1}{2} 
\int_{ \R^2 \setminus \B(1) }  {\nabla(hf)} \cdot \nabla (hg)\, dx,
\]
by \eqref{generatorsym} and integration by parts. Since this
is a symmetric function of $f,g$, claim~(i) follows.
For~(ii), using~\eqref{df_Rt}, we write 
the generator of~$\RR$ as 
\begin{equation}\nn
L_{\RR}f(r)= \frac{1}{2r \ln^2 r} \big( f'(r) r \ln^2r \big)' ,
\end{equation}
for smooth $f: (1, \8) \to \R$,
and conclude by a similar computation.

For a Brownian motion~$W$ starting from~$x/a$, 
the scaled process
 $W^{(a)}(t)=aW(t/a^2)$ is a Brownian motion
starting from~$x$. Since~$\hW$ is the Brownian motion conditioned
 on never hitting $\B(1)$,
the process $(a \hW(ta^{-2}); t \geq 0)$ 
 has the law of~$W$ conditioned on never hitting $\B(a)$.
In turn, the latter has the same law as~$\hW$ starting from~$x$
 conditioned on never hitting~$\B(a)$. 
 We have proved~(iii). 
Claim~(iv) follows directly from~(iii) 
and the definition of~$\RR$ as the norm of~$\hW$.
\end{proof}

Since $\big(\!\ln\|\hW_t\|\big)^{-1}$
is a local martingale, the optional stopping theorem implies
that for any $1<a<\|x\|<b<\infty$
\begin{equation}
\label{hitting_condBM}
 \IP_x[\htau(b)<\htau(a)] = \frac{(\ln a)^{-1}-(\ln \|x\|)^{-1}}
 {(\ln a)^{-1}-(\ln b)^{-1}}
   =\frac{\ln(\|x\|/a)  \times \ln b}{\ln (b/a) \times\ln \|x\|}.
\end{equation}
Sending $b$ to infinity in~\eqref{hitting_condBM}
we also see that for $1\leq a\leq \|x\|$
\begin{equation}
\label{escape_condBM}
 \IP_x[\htau(a)=\infty] = 1-\frac{\ln a}{\ln \|x\|}.
\end{equation}
%The next property justifies our opinion 
%that the process~$\hW$ is complex and fascinating
%(see also the display just after~(36) in~\cite{CPV16}).
% ..............
% Using the above, it is straightforward to obtain the following
% fact:
% \begin{prop}
% \label{p_basic_W}
% The diffusion~$\RR$ (and therefore $\hW$) is transient, in the sense
% $\lim_{t\to\infty} \RR_t = +\infty$ a.s. Moreover, 
% \begin{equation}
% \label{eq_basic_W}
%  \IP_y\big[\text{there exists }t>0 \text{ such that }
% \RR_t\leq 1\big]=0 
%  \text{ for all }y\geq 1.
% \end{equation}
% \end{prop}
% % \textbf{(wasn't it proved before already?)}
% 
% \begin{proof}
%  Let $1<b<y$ and define 
%  $p_b=\IP_y[\text{there exists }t>0 \text{ such that }\RR_t\leq b]$.
%  Using~\eqref{escape_condBM}, we 
% obtain that $p_b = \frac{\ln b}{\ln y} < 1$. In fact, it holds that
% $p_b\to 0$ as $b\downarrow 1$, and this implies the claim for $y>1$.
% 
% For the case $y=1$, the claim follows from the fact that 
% for any~$\eps>0$ a.s.\ there exists $t\in(0,\eps)$ 
% such that~$\RR_t>1$,
% together with the (strong) Markov property.
% \end{proof}

Now, we introduce one of the main objects of this paper:
\begin{df}
\label{df_moustache}
 Let~$U$ be a random variable with uniform
 distribution on $[0,2\pi]$, and let
 $(\RR^{(1,2)},\Theta^{(1,2)})$
be two independent copies of the two-dimensional 
diffusion on $\R_+\times \R$
defined by \eqref{df_Rt}--\eqref{df_Theta_t}, 
with common initial point~$(1,U)$.
Then, the {\bf Wiener moustache}~$\eta$ is defined as
 the union of ranges of the two trajectories, i.e., 
\[
\eta = \big\{(r,\theta): \text{ there exist }k\in\{1,2\}, t\geq 0
 \text{ such that } \RR^{(k)}_t=r, \Theta^{(k)}_t=\theta\big\}. 
\]
When there is no risk of confusion, 
we also call the Wiener moustache
the image of the above object under 
the map $(r,\theta)\mapsto (r\cos\theta, r\sin \theta)$
(see below).
\end{df}
We stress that we view the trajectory of a process 
as a geometric subset of~$\R^2$, forgetting its time parametrization.

Informally, the Wiener moustache is just the union of
 two independent Brownian trajectories
started from a random point on the boundary of the unit disk, conditioned 
on never re-entering this disk, see Figure~\ref{f_moustache}.
\begin{figure}
\begin{center}
\includegraphics{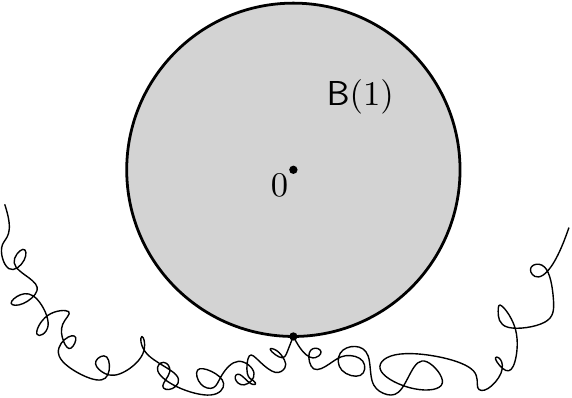}
\caption{An instance of Wiener moustache (for aestetical reasons,
the starting point was randomly chosen to be $x=(0,-1)$).}
\label{f_moustache}
\end{center}
\end{figure}
One can also represent the Wiener moustache as one doubly-infinite
trajectory $(\hW_t, t\in \R)$, where $\|\hW_0\|=1$, $\|\hW_t\|>1$
for all $t\neq 0$.

\begin{rem}
\label{rem_hW_b}
The Brownian motion $\hW^{b}$ conditioned not to enter in the ball~$\B(b)$
 of radius $b>0$ can be defined similarly to above. 
% It is easy to check by scaling that it is given by
Proposition~\ref{prop:elemhW} (iii) and Lemma~\ref{l_conn_W_hW}
imply that, in law, 
\[
\hW^{b}(t) = b \hW(b^{-2}t)\;,\quad t \geq 0.
\]
Therefore, the set of visited points and the connected component 
of~$0$ in its complement are simply the $b$-homothetics 
of the Wiener moustache.
\end{rem}

\subsection{Two-dimensional Brownian random interlacements}
\label{s_df_BRI}
Now, we are able to define the model of Brownian random 
interlacements in the plane. We prefer not to imitate the corresponding
construction of~\cite{CPV16} of discrete random interlacements
which  uses a general construction of~\cite{T09}
of random interlacements on transient weighted graphs;
instead of defining a consistent family of probability 
measures on closed subsets of bounded regions, we rather 
give an %``explicit description'' 
``infinite volume description'' of the model.
\begin{df}
\label{df_BRI}
 Let~$\alpha>0$ and consider a Poisson point process
$(\rho_k^\alpha, k\in\Z)$ on~$\R_+$ 
with intensity~$r(\rho)=\frac{2\alpha}{\rho}, \rho\in\R_+$.
Let $(\eta_k, k\in\Z)$ be an independent sequence of i.i.d.\ Wiener moustaches.
Fix also $b\geq 0$.
Then, the model of Brownian Random Interlacements (BRI) on level~$\alpha$
truncated at~$b$ is defined as the following subset of $\R^d \setminus \B(1)$
(see Figure~\ref{f_BRI_def}):
\begin{equation}
\label{eq_BRI}
\BRI(\alpha;b) = \bigcup_{k:\rho_k^\alpha \geq b}\rho_k^\alpha \eta_k\,.
\end{equation}
\end{df}
Let us abbreviate $\BRI(\alpha):=\BRI(\alpha;1)$.
As shown on Figure~\ref{f_BRI_def}
on the left, the Poisson process with 
rate~$r(\rho)=\frac{2\alpha}{\rho}$
can be obtained from a two-dimensional Poisson point process
with rate~$1$ in the first quadrant, 
by projecting onto the horizontal axis those points which lie below
$r(\rho)$. Since the area under~$r(\rho)$
is infinite in the neighborhoods of~$0$ and~$\infty$, there 
is a.s.\ an infinite number of points of the Poisson
process in both $(0,\eps)$ and~$(M,\infty)$ for all 
positive~$\eps$ and~$M$. 

An important observation is that the above Poisson process
is the image of a homogeneous 
 Poisson process of rate~$1$ in~$\R$ under the map 
$x\mapsto e^{x/2\alpha}$
(or, equivalently, the image of a homogeneous 
 Poisson process of rate~$2\alpha$ %in~$\R$
 under the map 
$x\mapsto e^{x}$); this is a straightforward 
consequence of the Mapping Theorem
for Poisson processes (see e.g.\ Section~2.3 of~\cite{K93}).
In particular, we may write
\begin{equation}
\label{rho_exponential}
 \rho_k^\alpha = \exp\Big(\frac{Y_1+\cdots+Y_k}{2\alpha}\Big),
\end{equation}
where $Y_1,\ldots,Y_k$ are i.i.d.\ Exponential(1) 
random variables.

\begin{rem}
\label{rem_simultaneous} 
 From the above, it follows 
%Also, it is important to note 
that we can construct
$\BRI(\alpha;0)$ -- and hence also $\BRI(\alpha;b)$ for all~$b>0$ --
\emph{simultaneously} for all~$\alpha>0$
in the following way (as shown on the left side
of Figure~\ref{f_BRI_def}): consider a 
\emph{two-dimensional} Poisson point process
of rate~$1$ in~$\R^2_+$, and then take the abscissa of the points 
below the graph of $r(\rho) = \frac{2\alpha}{\rho}$
to be the distances to the origin of the corresponding
Wiener's moustaches. In view of the previous observation,
an equivalent way to do this is to consider a Poisson point process
 of rate~$1$ in~$\R\times \R_+$, take the first coordinates of points
with second coordinate at most~$2\alpha$, and exponentiate.
\end{rem}

Observe also that, by construction, for all positive 
$\alpha, \beta, b$ it holds that
\begin{equation}
\label{superp_BRI}
 \BRI(\alpha;b)\oplus \BRI(\beta;b) \eqlaw \BRI(\alpha+\beta;b),
\end{equation}
where $\oplus$ means superposition of independent copies.

\begin{figure}
\begin{center}
\includegraphics[width=\textwidth]{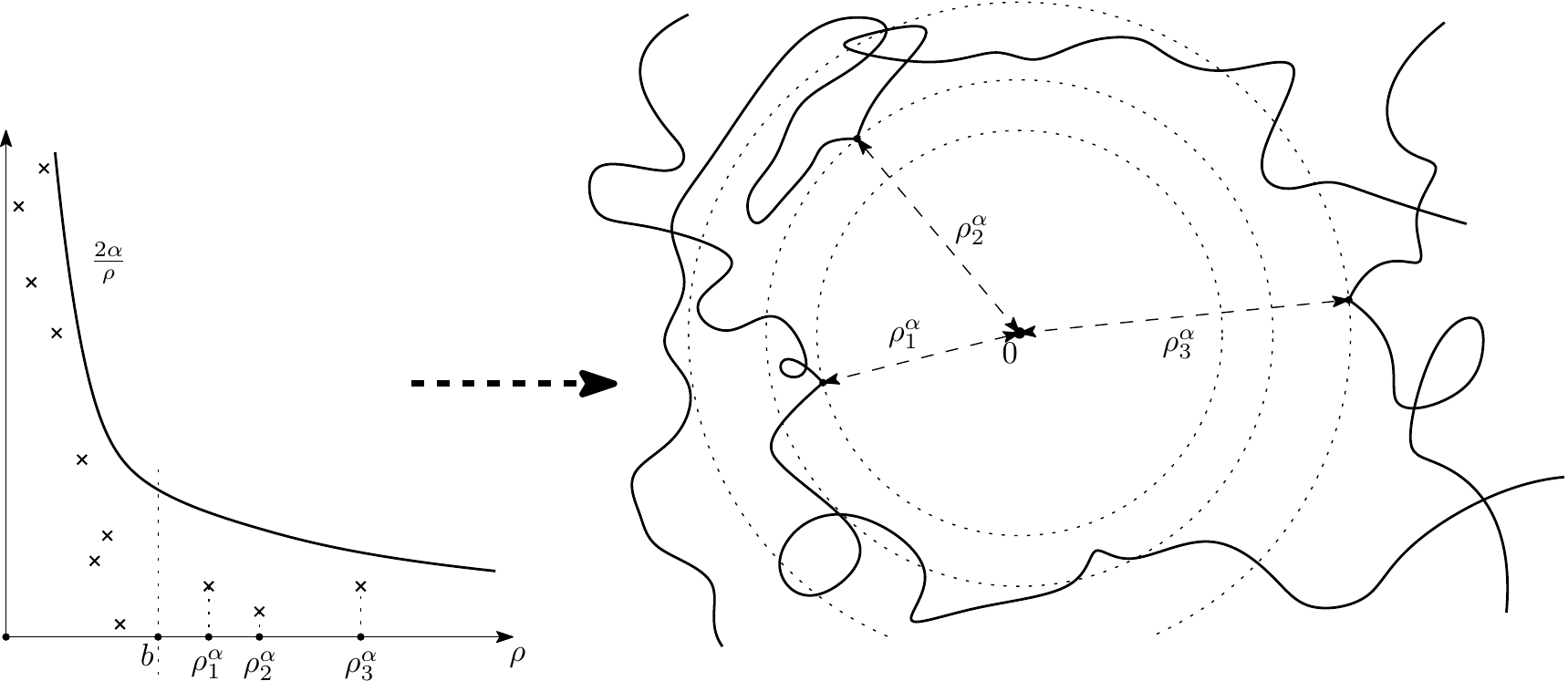}
\caption{On the definition of $\BRI(\alpha;b)$}
\label{f_BRI_def}
\end{center}
\end{figure}

At this point it is worth mentioning that the (discrete) 
random interlacements may be regarded as Markovian loops ``passing
through infinity'', see e.g.\ Section~4.5 of~\cite{Szn12}.
In the continuous case, we note that the object we just constructed can be viewed
as ``Brownian loops through infinity''. More precisely, we have a simple relation to the Brownian loop measure defined in~\cite{LW04} (see also~\cite{LSW03}) and studied in~\cite{W12}:
\begin{theo}
\label{p_Lawler_werner}
 Consider a Poisson process of loops rooted in the origin
with the intensity measure $2\pi\alpha\mu(0,0)$, 
with $\mu(\cdot,\cdot)$ as defined in Section~3.1.1
of~\cite{LW04}.  Then, the inversion (i.e., the image
under the map $z\mapsto 1/z$) of this family of loops
is~$\BRI(\alpha;0)$. The inversion of the loop process
restricted on~$\B(1)$ is~$\BRI(\alpha)$. 
\end{theo}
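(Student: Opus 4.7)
The plan is to describe both sides as Poisson point processes on the space of (time-unparametrized) planar paths and match their intensity measures via a disintegration by the extremum of the modulus along each path --- the minimum for $\BRI(\alpha;0)$ and the maximum for loops rooted at $0$, quantities exchanged by the inversion $\iota(z)=1/z$. For $\BRI(\alpha;0)$, Definition~\ref{df_BRI} already provides such a disintegration: the $\rho_k^\alpha$'s form a Poisson process on $(0,\infty)$ with intensity $(2\alpha/\rho)\,d\rho$, and conditional on $\rho_k^\alpha=\rho$ the trajectory $\rho\eta_k$ is, by Remark~\ref{rem_hW_b}, the union of two independent $\hW^\rho$-branches from a common uniform point on $\partial\B(\rho)$.

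Next, for the loop soup with intensity $2\pi\alpha\,\mu(0,0)=\int_0^\infty (\alpha/t)\,\IP^{0\to 0}_t\,dt$ (with $\IP^{0\to 0}_t$ denoting the bridge of duration $t$), I compute the marginal intensity of the maximum modulus via Brownian scaling. Since the bridge maximum $M_t\eqlaw\sqrt{t}\,M_1$, the substitution $u=M/\sqrt{t}$ in
\begin{equation*}
f(M) = \int_0^\infty \frac{1}{2\pi t^{3/2}}\,f_{M_1}(M/\sqrt{t})\,dt
\end{equation*}
collapses to $f(M)=(\pi M)^{-1}$. Thus the maxima under $2\pi\alpha\,\mu(0,0)$ form a Poisson process with intensity $(2\alpha/M)\,dM$, which $\sigma=1/M$ converts to exactly $(2\alpha/\sigma)\,d\sigma$, matching the interlacement side. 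To match the conditional laws, fix $M$: rotational invariance places the argmax $z=Me^{i\Theta}$ uniformly on $\partial\B(M)$, and the loop splits at the a.s.\ unique argmax time into two conditionally independent halves; after time-reversal, each half is planar Brownian motion from $z$ conditioned (via Doob's $h$-transform) to remain in $\B(M)$ and converge to $0$, i.e.\ the $h$-transform by $g_M(w)=\ln(M/|w|)$. The crucial identity
\begin{equation*}
g_M(w) = \ln(M/|w|) = \ln\bigl(|\iota(w)|/(1/M)\bigr)
\end{equation*}
says that, under the conformal change of variables $w\mapsto\iota(w)$, $g_M$ becomes precisely the $h$-function defining $\hW^{1/M}$ in Lemma~\ref{l_conn_W_hW} together with Remark~\ref{rem_hW_b}. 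Conformal invariance of planar Brownian motion and of its Doob $h$-transforms then identifies each inverted half with a branch of $\hW^{1/M}$ started at $\iota(z)\in\partial\B(1/M)$, with $\iota(z)$ uniform since $\Theta$ is; reversibility of $\hW^{1/M}$ (Proposition~\ref{prop:elemhW}(i)) ensures that the two branches glue symmetrically at this starting point. So conditional on the max modulus $M$, the inverted loop is a Wiener moustache of scale $\rho=1/M$, matching the first paragraph.

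Combining these disintegrations yields equality in law of $\iota(\text{loop soup})$ and $\BRI(\alpha;0)$. The restriction statement is then immediate, since a loop stays in $\B(1)$ iff its max modulus is at most $1$, which under $\iota$ corresponds to minimum modulus at least $1$, i.e.\ $\BRI(\alpha;1)=\BRI(\alpha)$. The main obstacle is the conditional-law step: one has to make sense of the regular conditional distribution on the zero-probability event $\{\|\gamma\|_\infty=M\}$ under $\mu(0,0)$, implement a Williams-type decomposition at the argmax time cleanly in two dimensions (including entrance from the singular points $0$ and $z\in\partial\B(M)$), and combine conformal invariance with the displayed $h$-function identity --- while checking that the implicit time change from conformal invariance is invisible at the level of geometric ranges used throughout Definition~\ref{df_BRI}.
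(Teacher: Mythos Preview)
Your approach is correct and is essentially an explicit unpacking of the paper's two-line proof, which simply cites Theorem~1 of Pitman--Yor~\cite{PY96} (the decomposition at the maximum) together with conformal invariance of Brownian trajectories. Your disintegration by the maximal modulus, the scaling computation giving the $(2\alpha/M)\,dM$ intensity, and the $h$-function identity $\ln(M/|w|)=\ln(|\iota(w)|/(1/M))$ are precisely the ingredients hidden behind that citation.
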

\begin{proof}
This readily follows from Theorem~1 of~\cite{PY96}
and the invariance of Brownian trajectories under conformal
mappings. 
\end{proof}

\begin{rem}
\label{rem_arb_domain} 
Analogously to~\eqref{df_hat_p}--\eqref{generator} one can 
also define a diffusion~$\hW^{(L)}$ avoiding a 
% bounded simply connected domain~$L\subset \R^2$ 
% with nonempty interior.
compact set~$L\subset \C$ 
such that $\C\setminus L$ is simply connected on the Riemann sphere.
Observe that, by the Riemann mapping theorem,
there exists a unique conformal map~$\phi$
that sends the exterior of~$\B(1)$ to
the exterior of~$L$ and also satisfies the conditions
 $\phi(\infty)=\infty$, $\phi'(\infty)>0$.
We then define $\BRI(\alpha;L)$ as $\phi(\BRI(\alpha))$,
see Figure~\ref{f_conformal}.
\end{rem}
\begin{figure}
\begin{center}
\includegraphics{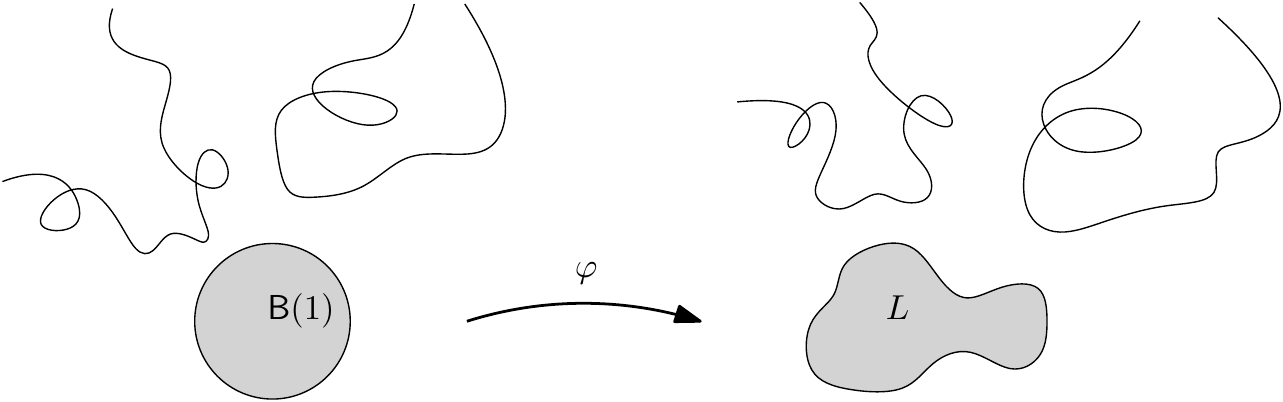}
\caption{On the definition of random interlacements avoiding
the domain~$L$}
\label{f_conformal}
\end{center}
\end{figure}

Next, we need also to introduce  the notion of capacity in the plane. Let~$A$ be a compact subset
of~$\R^2$
% and assume that~$0$ is a \emph{regular point}
% of~$A$ (we write $0\in A^r$), 
% see e.g.\ Definition~8.1 of~\cite{MP10}.
% Also, we call a set regular, if all its points 
% are regular.
such that $\B(1)\subset A$. Let~$\hm_A$ be the \emph{harmonic measure} (from infinity) on~$A$, that is, the entrance law in $A$ for the Brownian motion starting from infinity, cf.\ e.g.\ Theorem~3.46 of~\cite{MP10}. 
We define the % (Brownian) capacity of~$A$
% logarithmic 
capacity of~$A$ as
\begin{equation}
\label{df_Br_cap}
 \capa(A) = \frac{2}{\pi}\int_A \ln\|y\|\, d\hm_A(y) .
\end{equation}
% {\fc The set $A$ is nonpolar if  $\capa(A) >-\8$.}
We stress that there exist other definitions
of capacity; a common one, called  the \emph{logarithmic capacity}  in Chapter~5 of~\cite{Ransford95}, is given by the
exponential of~\eqref{df_Br_cap} without the constant~$\frac{2}{\pi}$
in front.  However,  in this paper, we prefer the above definition~\eqref{df_Br_cap}
which matches
the corresponding ``discrete'' capacity for random walks,
cf.\ e.g.\ Chapter~6 of~\cite{LL10}.
Note that~\eqref{df_Br_cap} immediately implies that 
 $\capa(A) \geq 0$ for $A \supset \B(1)$,  and 
\begin{equation}
\label{capa_disk}
\capa\!\big(\B(r)\big) = \frac{2}{\pi}\ln r
\end{equation}
for any radius $r$. 
% Anticipating \eqref{expr_Cap}, we mention that for 
% a compact $K \subset \R^2 \setminus \B(1)$, 
% $\capa( K \cup \B(1))$ is not only the capacity of 
%  $K \cup \B(1)$ for the (recurrent) process $W$ 
% but also the capacity of $K$ 
% for the (transient) diffusion $\hW$. 
Next, we need to define the harmonic measure~$\hhm_A$ 
%with respect to
for the (transient)  %the 
conditioned diffusion~$\hW$. For a compact, 
non polar set~$A\subset \R^2$ (see, e.g. ~\cite[p.234]{MP10}),
with %$A\cap (\R^2\setminus\B(1))\neq \emptyset$
$A\subset \R^2\setminus \mathring{\B}(1)$
and $M\subset\partial A$, let 
\begin{equation} \nn
% \label{df_hat_hm}
\hhm_A(M) = \lim_{\|x\|\to \infty} \IP_x\big[\hW_{\htau(A)}\in M
 \mid \htau(A)<\infty \big] ,
\end{equation}
where the existence of the limit follows e.g.\ from Lemma~\ref{l_hat_hm}
below. Observe that for any~$A$ as above it holds 
that $\hhm_A\big(\B(1)\cap\partial A\big)=0$.

Now, we show that we have indeed
defined the ``right'' object in \eqref{eq_BRI}:
\begin{prop}
\label{p_indeed_BRI}
% ($A$ compact ? regular ?)
%  
\begin{itemize}
 \item[(i)]
 for any compact 
 $A\subset \R^2$ such that $\B(1)\subset A$, we have
\begin{equation}
\label{eq_vacant_Bro}
 \IP\big[A\cap\BRI(\alpha)=\emptyset\big]
    =  \exp\big(-\pi\alpha \capa(A)\big).
\end{equation}
Equivalently, for  $A$ compact, $
 \IP\big[A\cap\BRI(\alpha)=\emptyset\big]
    =  \exp\big(-\pi\alpha \capa(A \cup \B(1))\big).
$
 \item[(ii)]
the % image 
 trace of $\BRI(\alpha)$ on \emph{any} compact %regular 
set~$A$ such that $\B(1)\subset A$ can be obtained
using the following procedure:
\begin{itemize}
 \item take a Poisson($\pi\alpha\capa(A)$) number of particles;
 \item place these particles on the boundary of~$A$
 independently, with distribution~$\hhm_A$;
%  (the measure~$\hhm_A$ is defined in~\eqref{def_hat_hm});
% \textbf{define it earlier!}
 \item let the particles do independent $\hW$-diffusions
 (since~$\hW$ is transient, each walk only leaves a %finite 
 nowhere dense 
 trace
 on~$A$).
\end{itemize}
\end{itemize}
\end{prop}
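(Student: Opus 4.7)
I will establish (i) and (ii) together by computing the intensity of Wiener moustaches hitting $A$, then decomposing each such moustache at its last exit from $A$. By Definition~\ref{df_BRI} and the standard void-probability formula for a Poisson process,
\begin{equation*}
\IP[A\cap\BRI(\alpha)=\emptyset]
= \exp\Big(-\int_1^\infty \tfrac{2\alpha}{\rho}\IP[\rho\eta\cap A\neq\emptyset]\,d\rho\Big),
\end{equation*}
so (i) is equivalent to the identity $\int_1^\infty \rho^{-1}\IP[\rho\eta\cap A\neq\emptyset]\,d\rho = \tfrac{\pi}{2}\capa(A)$.

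The main analytic input is a sharp asymptotic for the $\hW$-hitting probability of $A$. Applying the $h$-transform identity~\eqref{def:hat_W} at the bounded stopping time $T=\tau(A)\wedge\tau(\partial\B(R))$ (via optional stopping on the martingale $h(W_{\cdot\wedge\tau(1)})/h(x)$) and letting $R\to\infty$ yields, for $\|x\|$ larger than the radius of $A$,
\begin{equation*}
\IP_x\big[\htau(A)<\infty\big]
= \frac{1}{\ln\|x\|}\int_{\partial A\setminus\partial\B(1)}\ln\|y\|\,d\hm^x_A(y),
\end{equation*}
where $\hm^x_A(\cdot):=\IP_x[W_{\tau(A)}\in\cdot]$ denotes the BM hitting distribution on $\partial A$. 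Weak convergence $\hm^x_A\Rightarrow\hm_A$ as $\|x\|\to\infty$ combined with~\eqref{df_Br_cap} then gives
\begin{equation*}
\ln\|x\|\cdot\IP_x\big[\htau(A)<\infty\big] \longrightarrow \tfrac{\pi}{2}\capa(A).
\end{equation*}

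For the intensity computation I decompose each moustache $\rho\eta$ that hits $A$ at its last-exit time $t^*=\sup\{t:\hW^\rho_t\in A\}$ and point $y^*=\hW^\rho_{t^*}\in\partial A$, both well-defined by transience of $\hW$. By the strong Markov property combined with Proposition~\ref{prop:elemhW}(iii), the post-$t^*$ trajectory is, conditional on $(\rho,y^*)$, a $\hW^\rho$-process from $y^*$ conditioned never to return to $A$. Integrating the $(2\alpha/\rho)\,d\rho$-density against the joint law of $(t^*,y^*)$ via Fubini, and using the asymptotic above together with the reversibility of $\hW$ (Proposition~\ref{prop:elemhW}(i)), I expect the marginal intensity of last-exit points on $\partial A$ to factor as $\pi\alpha\capa(A)\,\hhm_A(dy)$. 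Total mass $\pi\alpha\capa(A)$ establishes (i); the classical time-reversal identification at the last exit of a transient reversible process then matches the trace on $A$ of the pre-$t^*$ moustache with the trace of a forward $\hW$-diffusion started from $y^*\sim\hhm_A$, yielding (ii).

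The main technical obstacle lies in this last intensity computation: bridging the $\|x\|\to\infty$ asymptotic (information on $\hW$-trajectories far from $A$) to the integral over the full range $\rho\in[1,\infty)$ of moustache inner radii requires care, especially in the singular regime $\rho\to 1^+$ where the starting circle $\partial\B(\rho)$ approaches $\partial\B(1)\subset A$ and one must invoke the extended definition of $\hW$ from Definition~\ref{df_RR}. A practical route is to first restrict the Poisson process of moustaches to inner radii $\rho\in[1,R_0]$ inside a large auxiliary ball $\B(R_0)$ and then pass to the limit $R_0\to\infty$, mimicking the discrete 2D template of~\cite{CPV16} with~$\hW$ in place of the conditioned walk~$\s$.
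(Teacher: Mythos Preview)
Your void-probability reduction and the asymptotic $\ln\|x\|\cdot\IP_x[\htau(A)<\infty]\to\frac{\pi}{2}\capa(A)$ are both correct (the latter is Corollary~\ref{c_cond_avoid_A_forever}). The gap lies in how you connect them. A scaled moustache $\rho\eta$ is a pair of $\hW^\rho$-trajectories, i.e.\ Brownian motions conditioned to avoid~$\B(\rho)$, \emph{not}~$\B(1)$; in particular $\rho\eta\subset\R^2\setminus\mathring{\B}(\rho)$, so if $A\subset\B(r_A)$ only moustaches with $\rho\leq r_A$ can touch~$A$ and your integral is actually supported on the bounded interval $[1,r_A]$. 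The large-$\|x\|$ asymptotic for~$\hW$ therefore never applies directly to a leg of any relevant moustache. Likewise, your last-exit (or time-reversed first-entrance) decomposition of $\rho\eta$ at $\partial A$ produces a $\hW^\rho$-diffusion from the exit point, still depending on~$\rho$; this is not the $\hW$-diffusion required in~(ii). The discrete template of~\cite{CPV16} does not transfer verbatim because there the trajectories are unscaled $\s$-walks, whereas here they are $\rho$-scaled moustaches.

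The paper closes this gap with a different decomposition. Lemma~\ref{l_W_go_in} (a Williams-type splitting of a $\hW$-diffusion at its minimum norm) shows that the trace of $\BRI(\alpha)$ on any large disk~$\B(r)$ coincides in law with the trace of a Poisson($2\alpha\ln r$) number of independent $\hW$-diffusions started uniformly on~$\partial\B(r)$: the minimum norm~$\QQ$ of such a diffusion has density $\frac{1}{h\ln r}\mathbf{1}_{[1,r]}(h)$, matching the conditional law of the $\rho_k$'s in $[1,r]$, and given~$\QQ$ the range is a scaled moustache. This trades the $\rho$-dependent family $\hW^\rho$ for the single process~$\hW$ started far away; now each diffusion hits~$A$ with probability $\frac{\pi\capa(A)}{2\ln r}(1+o(1))$, Poisson thinning yields rate $\pi\alpha\capa(A)$, and sending $r\to\infty$ gives~(i). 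Part~(ii) is then the first-entrance description of the hitting diffusions, with entrance law~$\hhm_A$ by its definition (Lemma~\ref{l_hat_hm}). Your integration over~$\rho$ combined with reversibility would in effect have to reprove Lemma~\ref{l_W_go_in}; that lemma is the missing ingredient in your plan.
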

We postpone the proof of this result to Section~\ref{s_proofs}.
Let us stress that~\eqref{eq_vacant_Bro} is \emph{the characteristic
property of random interlacements}, compare to~(2) of~\cite{CPV16}.
As explained just after Definition~2.1 of~\cite{CPV16}, the 
factor~$\pi$ is there just for convenience, since it makes 
the formulas %become generally 
cleaner.
Also, the construction in~(ii) of $\BRI(\alpha)$ on~$A$
agrees with the corresponding discrete ``constructive description''
of~\cite{CPV16}, and is presented in larger details just after Definition~2.1 there.

Next, for a generic function $g:\C\mapsto\C$, 
we denote by $g\big(\!\BRI(\alpha;b)\big)$
the image of $\BRI(\alpha;b)$ under the map~$g$.
We will also need from~$g$ that the image of a
% Brownian trajectory is itself a Brownian trajectory.
Wiener moustache is itself a Wiener moustache.
Thus, one needs to give a special treatment to power functions
of the form~$g(z)=z^\lambda$ for a noninteger~$\lambda$.
We will use the polar representation, so that 
%$(r,\theta)^\lambda = (r^\lambda,\lambda\theta)$. 
%The advantage of this representation is that 
the complex-valued  power function now has a natural definition,
\begin{equation}
\label{df_power}
 \text{for }r>0 \text{ and } \theta\in\R, \quad (r,\theta)^\lambda 
 := (r^\lambda, \lambda\theta);
\end{equation}
%so that we can consider the image of the $\hW$-trajectory under the map $z\mapsto z^\lambda$ 
for arbitrary $\lambda>0$.
%However,  it would be problematic to define
%$\big(\!\BRI(\alpha;b)\big)^\lambda$ as simply the image
%  of $\BRI(\alpha;b)$ under the above map for noninteger~$\lambda$,
%for the following reason. Initially, we have chosen 
However this transformation does not preserve the law of Wiener moustache. Indeed, 
the angular coordinates of the initial points 
of the moustaches to be uniform in the interval~$[0,2\pi)$.
If we then apply the power map with, say, $\lambda=1/2$, then
all the initial points will have their angular coordinates
in~$[0,\pi)$,  breaking the isotropy. 

In this concrete case this can be repaired by choosing
the angular coordinates of the initial points 
 in the interval~$[0,4\pi)$, but what to do for other
(possibly irrational) values of~$\lambda$?

% \textbf{discussion on the proper definition of 
% $\big(\BRI(\alpha;b)\big)^\lambda$!}
Here, we present a construction that works for all $\lambda>0$
simultaneously, but uses an additional sequence
$\xi=(\xi_1,\xi_2,\xi_3,\ldots )$ of i.i.d.\
Uniform$[0,2\pi)$ random variables. 
%\textbf{(introduce a special notation
%for the moustache with $\theta=0$? e.g.\ $\hat \eta$)}
Let~$\Theta_k$ be the angular coordinate of the initial 
point of~$\eta_k$, the $k$th moustache.
For all $\lambda>0$, set 
\begin{align*}
 \Theta_k^{(\lambda)} &= \lambda(\Theta_k - \xi_k)+\xi_k \mod 2\pi\\
   &= \lambda \Theta_k + (1-\lambda)\xi_k \mod 2\pi;
\end{align*}
 observe that $\Theta_k^{(\lambda)}$ is also Uniform$[0,2\pi)$.
Also, $\Theta_k\mapsto \Theta_k^{(\lambda)}$ seen 
as a function on $\R/2\pi \Z$ varies continuously 
in~$\lambda$. 
% \textbf{(do we need this?)}.
The extra randomization carried by the sequence~$\xi_k$ 
defines a transformation on the law of $\BRI(\alpha;b)$, where the scaling of the angle is performed with respect to a reference angle $\xi_k$:
if $(r,\theta)$ belongs to the $k$-th moustache, its image by the 
 power function will be 
$(r^\lambda,  \lambda \theta + (1-\lambda)\xi_k)$.

\subsection{Main results}
\label{s_results}
First, let us define the \emph{vacant set} --
set of points of the plane which do not 
belong to trajectories of $\BRI(\alpha;b)$
\begin{equation} \nn
\V^{\alpha;b} = \R^2\setminus \BRI(\alpha;b).
\end{equation}
For $s>0$ let~$\D_s(A)$ be the $s$-interior of $A\subset\R^2$:
% \textbf{(do we need this?)}
\[
  \D_s(A) = \{x:\B(x,s)\subset A\}.
\]
We are also interested in the sets of the form
$\D_s(\V^{\alpha;b})$, the sets of points that are at distance
larger than~$s$ to $\BRI(\alpha;b)$.
Let us also abbreviate $\V^{\alpha}:=\V^{\alpha;1}$.

To formulate our results, we need to define the %following quantity: 
logarithmic potential $\ell_x$ generated by the entrance law in the unit disk of Brownian motion starting at $x$:
for $x\notin\B(1)$,
\begin{equation}
\label{df_ell_y}
  \ell_x = \int_{\partial \B(1)}
\ln\|x-z\| \, H(x,dz)
= \frac{\|x\|^2-1}{2\pi}\int_{\partial \B(1)}
   \frac{\ln\|x-z\|}{\|x-z\|^2}\, dz,
\end{equation}
where $H(x,\cdot)$ is the entrance measure of the Brownian motion  starting from~$x$ into~$\B(1)$, given in~\eqref{Poisson_kernel} below,
also known as the Poisson kernel.

First, we list some properties of Brownian
interlacements 
(Theorems~\ref{t_basic_prop}, \ref{t_basic_size}, \ref{t_torus})
that are ``analogous'' to those of discrete
two-dimensional random interlacements obtained in~\cite{CP16,CPV16}. 
Then, we state the results which are specific for the Brownian
random interlacements (Theorems~\ref{t_BRI_image}, \ref{t_freedisk},
%%%%%%  francis suppressed   march 12      \ref{t_Phi_process}, \ref{t_amoeba}).
\ref{t_Phi_process}).
We do this for $\BRI(\alpha)$ only, since
$\BRI(\alpha;b)$ can be obtained from $\BRI(\alpha)$
by a linear rescaling (this is easy to see directly, but also observe
that it is a consequence of Theorem~\ref{t_BRI_image} with $\lambda=1$).
\begin{theo}
\label{t_basic_prop}
\begin{itemize}
 \item[(i)] For any $\alpha>0$, $x\in\R^2\setminus \B(1)$
 and for any compact  set
 $A\subset \R^2$, it holds that 
\begin{equation}
\label{properties_BRI_i}
 \IP[A\subset\V^\alpha \mid \B(x,1)\subset \V^\alpha]=
  \IP[-A+x\subset\V^\alpha \mid \B(x,1)\subset \V^\alpha].
\end{equation}
More generally, for all  $\alpha>0$, $x\in\R^2 \setminus \B(1)$,
 $A\subset \R^2$, and any
  isometry~$M$ exchanging $0$ and $x$, we have 
\begin{equation}
\label{properties_BRI_i'}
 \IP[A\subset\V^\alpha \mid \B(x,1)\subset \V^\alpha]=
  \IP[MA \subset\V^\alpha \mid \B(x,1) \subset \V^\alpha].
\end{equation}
We call this property \emph{the conditional translation invariance}.
 \item[(ii)] We have, for $x\notin\B(1)$
\begin{equation}
\label{properties_BRI_ii}
\IP[x\in \D_1(\V^\alpha)] \equiv
%\exp\Big(-\pi\alpha \frac{a(x)}{2}\Big)
\IP[\B(x,1)\subset\V^\alpha]=
\|x\|^{-\alpha}\big(1+O\big((\|x\|\ln\|x\|)^{-1}\big)\big).
\end{equation}
More generally, if $\|x\|>s+1$
\begin{align}
\IP[x  \in  \D_s(\V^\alpha)]
\label{properties_BRI_ii'}
&\equiv 
\IP[\B(x,s)\subset \V^\alpha]\\
& =  \exp \! \Big( \! -2\alpha
\frac{\ln^2\|x\|}
{\ln\|x\| \! + \! \ell_x  \! -  \! \ln s}
\Big(1 \! + \! O\big(\textstyle
\frac{\ln\theta_{x,s}}{\theta_{x,s}}\big(
\frac{1}{\ln(\|x\| +  1)} \! + \! \frac{1}{\ln\theta_{x,s}
 + \ln\|x\|} \! \big) \! \big) \! \Big) \! \Big),
\nn
\end{align}
where $\theta_{x,s}=\frac{\|x\|-1}{s}$.
 \item[(iii)] For compact 
 set~$A$ such that $\B(1)\! \subset \!A\subset \!\B(r)$
and $x\!\in\!\R^2$ such that $\|x\|\!\geq \!2r$,
\begin{equation}
\label{properties_BRI_iii}
 \IP[A\!\subset\!\V^\alpha \mid \B(x,1)\!\subset\! \V^\alpha]= 
   \exp\Bigg(\!-\frac{\pi\alpha}{4}\capa(A)
\frac{1+O\big(\frac{r\ln r }{\|x\|}\big)}
{1\!-\!\frac{\pi\capa(A)}{4\ln\|x\|}\!
+\!O\big(\frac{r\ln r}{\|x\|\ln\|x\|}\big)}
\Bigg)
\end{equation}
 \item[(iv)] Let $x,y\notin \B(1)$.
As $s:= \|x\| \to \infty$, $\ln \|y\|  \sim \ln s$ and
$\ln \|x-y\|\sim \beta \ln s$ with some $\beta\in [0,1]$, we have
\begin{equation*}
%\label{properties_RI_iv}
 \IP\big[\B(x,1)\cup \B(y,1) \subset \V^\alpha\big]
 % = s^{-\frac{4\alpha}{4-\beta}+o(1)}. 
= s^{-\frac{4\alpha}{4-\beta}+o(1)},
\end{equation*}
and 
polynomially decaying correlations 
\begin{equation}
\label{eq:cor}
\Cor\big(
 {\{\B(x,1)\subset \V^\alpha\}}, 
 {\{\B(y,1)\subset \V^\alpha\}}\big)
= s^{-\frac{\alpha \beta}{4-\beta}+o(1)},
\end{equation}
where $\Cor(A,B) = \frac{ {\rm Cov}({\mathbf 1}_A, {\mathbf 1}_ B)}
{[ \Var({\mathbf 1}_A) \Var(  {\mathbf 1}_ B)]^{1/2}}\in [-1,1]$.
\end{itemize}
\end{theo}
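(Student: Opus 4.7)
The unifying idea is to invoke the key identity~\eqref{eq_vacant_Bro} of Proposition~\ref{p_indeed_BRI}: for every compact $A\subset\R^2$,
\[
\IP[A\subset\V^\alpha]=\exp\bigl(-\pi\alpha\,\capa(A\cup\B(1))\bigr).
\]
Thus every (unconditional or conditional) vacancy probability in the statement becomes an exponential of capacities of unions of at most three disks, and the theorem reduces to asymptotic capacity estimates based on the definition~\eqref{df_Br_cap} together with the standard Brownian hitting estimate~\eqref{hitting_BM} and harmonic measure computations on the complement of a union of disks.

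Part~(i) is immediate once the conditional probability is rewritten as
\[
\IP[A\subset\V^\alpha\mid\B(x,1)\subset\V^\alpha]=\exp\bigl(-\pi\alpha[\capa(A\cup\B(x,1)\cup\B(1))-\capa(\B(x,1)\cup\B(1))]\bigr).
\]
Any isometry $M$ of $\R^2$ exchanging $0$ and $x$ maps $\B(1)\cup\B(x,1)$ onto itself, hence sends $A\cup\B(1)\cup\B(x,1)$ onto $MA\cup\B(1)\cup\B(x,1)$; since $\hm$ (and therefore $\capa$) is invariant under isometries, the numerator capacity is unchanged when $A$ is replaced by $MA$, giving~\eqref{properties_BRI_i'}, while~\eqref{properties_BRI_i} corresponds to the special case $M(y)=x-y$.

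For parts~(ii) and~(iii) the plan is to decompose the harmonic measure on the union of the disks involved according to which disk the Brownian motion from infinity enters: start a Brownian motion uniformly on a large circle $\partial\B(R)$ and let $R\to\infty$, using~\eqref{hitting_BM} to control the probability to hit a given disk before any other; the logarithmic potential $\ell_x$ from~\eqref{df_ell_y} enters precisely when integrating $\ln\|y\|$ against the entrance distribution on $\B(x,s)$. This yields $\capa(\B(1)\cup\B(x,s))$ with the sharp error terms required by~\eqref{properties_BRI_ii'}, and the special case $s=1$ collapses to~\eqref{properties_BRI_ii}. For~(iii) the same decomposition applied to the three-disk union $A\cup\B(x,1)\cup\B(1)$, which equals $A\cup\B(x,1)$ since $A\supset\B(1)$, produces after subtraction of $\capa(\B(1)\cup\B(x,1))$ a cancellation of the large $\ln\|x\|$ terms coming from $\partial\B(x,1)$, leaving a leading contribution of $\tfrac14\capa(A)$; the factor $\tfrac14$ reflects the partition of $\hm$ between the three components in the regime $\|x\|\gg r$, and the denominator $1-\pi\capa(A)/(4\ln\|x\|)$ in~\eqref{properties_BRI_iii} records the next-order correction to the hitting proportions.

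Part~(iv) is the analogous three-disk capacity computation performed in the scaling regime $\ln\|y\|\sim\ln s$, $\ln\|x-y\|\sim\beta\ln s$: balancing the three hitting probabilities by the same procedure gives
\[
\capa(\B(1)\cup\B(x,1)\cup\B(y,1))=\frac{4}{\pi(4-\beta)}\ln s\,(1+o(1)),
\]
hence the joint probability $s^{-4\alpha/(4-\beta)+o(1)}$. Combined with $\IP[\B(x,1)\subset\V^\alpha]\sim\|x\|^{-\alpha}$ and the analogue for $y$ from~(ii), the correlation~\eqref{eq:cor} follows by direct algebra after noting that both variances are of order $s^{-\alpha+o(1)}$. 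The main technical obstacle throughout is producing these capacity asymptotics with the precise error terms displayed in \eqref{properties_BRI_ii'} and~\eqref{properties_BRI_iii}: the leading constants (the $\ell_x$-correction in~(ii), the $\tfrac14$ in~(iii), and the $\tfrac{4}{4-\beta}$ in~(iv)) all emerge from delicate next-order expansions of Brownian hitting probabilities in doubly and triply connected planar domains, and it is in these expansions that the bulk of the work lies.
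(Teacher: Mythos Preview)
Your proposal is correct and follows essentially the same route as the paper: reduce everything to~\eqref{eq_vacant_Bro}, use isometry invariance of capacity for~(i), and feed in two- and three-disk capacity asymptotics for (ii)--(iv). The paper carries out the capacity expansions you describe in separate auxiliary results (Lemma~\ref{l_cap_distantdisk} for $\capa(\B(1)\cup\B(x,s))$ and $\capa(\B(1)\cup A)$, and Lemma~\ref{l_cap_3disks} for the three-disk case), obtained by exactly the ``start on $\partial\B(R)$ and solve the linear hitting system'' procedure you outline, so what you identify as ``the bulk of the work'' is precisely the content of those lemmas.
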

\begin{rem}
When $x\in\partial\B(1)$,
one can also write both explicit and asymptotic as $r\to 0$
formulas for $\IP[x\in\D_r(\V^\alpha)]$ using Lemma~\ref{l_cap_blister}
below.
Also, the results presented in~(iv) above are important because they 
permit us to ``quantify'' the dependence between what happens in
different (distant) places. Such quantitative estimates play an
important role in many renormalization-type arguments, which are frequently
useful in the context of random interlacements.
\end{rem}

  From the definition, it is clear that
$\BRI(\alpha)$ model is not translation invariant
(note also~\eqref{properties_BRI_ii}). 
Therefore, in {\em (iv)} we emphasize estimate~\eqref{eq:cor} 
using the correlation in order to measure the spatial dependence 
because this is a normalized quantity. 

Then, we obtain a few results about the size of the
interior of the vacant set.
\begin{theo}
\label{t_basic_size}
Fix an arbitrary $s>0$.
\begin{itemize}
 \item[(i)] We have, as $r\to\infty$
% \textbf{(write all this for $\D_s()$)?}
\[
 \E \big(\vert \D_s(\V^\alpha)\cap \B(r)\vert \big) \sim
  \begin{cases}
   \frac{2\pi}{2-\alpha} \times r^{2-\alpha}, 
& \text{ for }\alpha < 2,\\
  2\pi \times \ln r, & \text{ for }\alpha = 2,\\
   \frac{2\pi}{\alpha-2} , & \text{ for }\alpha > 2.
 \end{cases}
\]
 \item[(ii)] For all $\alpha>1$,  the set $\D_s(\V^\alpha)$ is
  a.s.\ bounded. Moreover,  we have
  $\IP\big[\D_s(\V^\alpha) \subset \B(1-s+\delta) \big]>0$
 for all $\delta>0$, 
 %\fc $\delta >1$ [????] \nc
 and $\IP\big[\D_s(\V^\alpha) \subset \B(1-s+\delta) \big]\to 1$ 
 as $\alpha\to\infty$.
 \item[(iii)] For all $\alpha \in (0,1]$, the set~$\D_s(\V^\alpha)$
is a.s.\ unbounded.  Moreover, for $\alpha \in (0,1)$ it holds that
\begin{equation}
\label{eq_emptydisk}
 \IP\big[\D_s(\V^\alpha)\cap \big(\B(r)
 \setminus \B(r/2)\big)=\emptyset\big]
 \leq r^{-2(1-\sqrt{\alpha})^2+o(1)}.
\end{equation}
\end{itemize}
\end{theo}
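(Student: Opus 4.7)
My plan addresses the three parts in turn, each relying on the vacancy probability formulas of Theorem~\ref{t_basic_prop} together with the Poisson structure of~$\BRI(\alpha)$.

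For part~(i), I apply Fubini: since $x \in \D_s(\V^\alpha)$ iff $\B(x, s) \subset \V^\alpha$, we have
\[
 \E \bigl( |\D_s(\V^\alpha) \cap \B(r)| \bigr) = \int_{\B(r)} \IP[\B(x, s) \subset \V^\alpha]\, dx,
\]
and Theorem~\ref{t_basic_prop}(ii) gives $\IP[\B(x, s) \subset \V^\alpha] = \|x\|^{-\alpha}(1+o(1))$ as $\|x\| \to \infty$ (the $s$- and $\ell_x$-dependent factors being lower-order corrections). Polar coordinates reduce the integral to $2\pi\int_{1}^{r} \rho^{1-\alpha}(1+o(1))\, d\rho$ (plus a negligible contribution from $\|x\|$ close to~$1$), and the three announced asymptotics correspond to the three integrability regimes $\alpha<2$, $\alpha=2$, $\alpha>2$.

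For part~(ii), I treat the boundedness claim and the two complementary statements separately. For $\alpha>2$ I apply Borel--Cantelli on the dyadic annuli $A_n = \B(2^{n+1}) \setminus \B(2^n)$: cover $A_n$ by $O((2^n/s)^2)$ balls of radius $s/2$, observe that $\D_s(\V^\alpha) \cap A_n \neq \emptyset$ forces one such smaller ball to be contained in $\V^\alpha$, and union-bound to obtain $O(2^{n(2-\alpha)})$, which is summable. The regime $1 < \alpha \leq 2$ is the delicate one because the naive union bound fails; here I would combine the two-point decorrelation estimate of Theorem~\ref{t_basic_prop}(iv) with a second-moment argument at an adaptively chosen scale, equivalently a multi-scale renormalization of the $s$-covering events by the Poissonian trajectories. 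For the positive-probability statement $\IP[\D_s(\V^\alpha) \subset \B(1-s+\delta)] > 0$, I first use the already-proved boundedness to restrict to an event $\{\D_s(\V^\alpha) \subset \B(R_0)\}$ of probability close to~$1$ for~$R_0$ large, and then construct an explicit positive-probability Poisson configuration with enough trajectories at scales in $[1, R_0]$ whose $s$-thickenings cover $\B(R_0) \setminus \B(1-s+\delta)$ (including one trajectory with $\rho_k$ near~$1$ to handle the thin inner shell $\B(1)\setminus\B(1-s+\delta)$). The limit as $\alpha\to\infty$ follows from monotonicity of $\BRI(\alpha)$ in $\alpha$ and from $\|x\|^{-\alpha}\to 0$ at every fixed $x$.

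For part~(iii), I use a second-moment argument. Let $N_R$ count the $s$-isolated centers on a suitable sub-lattice of $\B(R)\setminus\B(R/2)$: Theorem~\ref{t_basic_prop}(ii) gives $\E N_R \asymp R^{2-\alpha}$, and Theorem~\ref{t_basic_prop}(iv) allows the evaluation of $\E N_R^2$ by splitting pairs according to the scale of $\|x-y\|$. Choosing the sub-lattice at the scale $R^{1-\sqrt{\alpha}}$ (or more generally optimizing over the test-ball radius) balances the entropic and probabilistic factors in precisely the way that produces the exponent $2(1-\sqrt{\alpha})^2$ in~\eqref{eq_emptydisk}. Summing this bound over dyadic $R = 2^n$ and applying Borel--Cantelli yields a.s.\ unboundedness for $\alpha \in (0,1)$. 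The endpoint $\alpha=1$ needs a separate argument; I would exploit the decomposition $\BRI(1) = \BRI(1-\eps) \oplus \BRI(\eps)$ with $\eps>0$ small, show that a positive fraction of the $s$-isolated points of $\V^{1-\eps}$ (which are unboundedly many by the case already handled) remain isolated after adjoining the independent $\BRI(\eps)$, and conclude from the result for $\alpha = 1-\eps$.

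The main obstacles are the sub-critical range $1 < \alpha \leq 2$ in~(ii) and the derivation of the sharp exponent $2(1-\sqrt{\alpha})^2$ in~(iii): both hinge on a careful interplay between one- and two-point vacancy estimates at the critical scale, and the common phase transition at $\alpha=1$ suggests that a single multi-scale framework for the Poisson process of trajectories could handle them uniformly.
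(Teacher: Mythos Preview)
Your treatment of part~(i) and of the superposition argument for the last two claims of part~(ii) is correct and matches the paper. The substantive gaps are in the boundedness for $1<\alpha\le 2$, and especially in part~(iii).

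For part~(iii), the second-moment approach does not work. The long-range positive correlations in this model are too strong: by Theorem~\ref{t_basic_prop}(iv), for $x,y$ in $\B(r)\setminus\B(r/2)$ with $\|x-y\|\asymp r$ one has
\[
\IP\big[\B(x,1)\cup\B(y,1)\subset\V^\alpha\big]=r^{-4\alpha/3+o(1)}
\quad\text{while}\quad
\big(\IP[\B(x,1)\subset\V^\alpha]\big)^2=r^{-2\alpha+o(1)},
\]
so if $N_r=\sum_x \1{\B(x,1)\subset\V^\alpha}$ over any lattice in the annulus, the pair sum at maximal separation already gives $\E N_r^2/(\E N_r)^2\ge r^{2\alpha/3+o(1)}\to\infty$. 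Hence Paley--Zygmund/Chebyshev yields no nontrivial upper bound on $\IP[N_r=0]$, let alone a polynomially small one; no choice of sub-lattice spacing repairs this, because the ratio $\IP[\text{both vacant}]/\IP[\text{vacant}]^2$ diverges even at the largest available scale. The exponent $2(1-\sqrt\alpha)^2$ in~\eqref{eq_emptydisk} is \emph{not} a moment balance: in the paper it arises as a large-deviation rate. One places $\asymp r^{2(1-\beta)}$ disjoint disks $B(x_j,r^\beta)$ in the annulus, shows via a compound-Poisson large-deviation bound that the number of $\hW$-excursions between $\partial B(x_j,r^\beta)$ and $\partial B(x_j,\gamma r^\beta)$ exceeds $\frac{2b\alpha\ln^2 r}{\ln\gamma}$ with probability at most $r^{-(\sqrt b-1)^2\frac{2\alpha}{2-\beta}+o(1)}$, and then couples these excursions with Brownian excursions on a slightly larger torus (via soft local times) to invoke the Belius--Kistler subleading cover-time result. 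Optimizing $b\uparrow 1/\alpha$ and $\beta\uparrow 1$ gives the exponent. Your endpoint argument for $\alpha=1$ also fails: given an isolated point $x$ of $\V^{1-\eps}$ at distance $\asymp R$, the independent $\BRI(\eps)$ misses $\B(x,s)$ only with probability $\asymp R^{-\eps}\to 0$, so one cannot conclude that a positive fraction survive. The paper handles $\alpha=1$ by a separate and delicate argument exploiting Gaussian fluctuations of excursion counts at a supergeometric sequence of scales.

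For part~(ii) with $1<\alpha\le 2$, your proposal is too vague to assess, and a second-moment argument is in any case the wrong direction (one needs to show $\D_s(\V^\alpha)$ is eventually \emph{empty}, not nonempty). The paper again uses excursion counting: the number $N_{\alpha,r}$ of $\hW$-excursions of $\BRI(\alpha)$ between $\partial\B(2r)$ and $\partial\B(r\ln r)$ is compound Poisson, a large-deviation lower tail gives $\IP[N_{\alpha,r}\le \frac{2b\alpha\ln^2 r}{\ln\ln r}]\le r^{-2\alpha(1-\sqrt b)^2+o(1)}$, and conditionally on having that many excursions a union bound over a fixed lattice of small disks in $\B(r)\setminus\B(r/2)$ shows all are hit. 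Optimizing over~$b$ yields $\IP[\D_s(\V^\alpha)\cap(\B(r)\setminus\B(r/2))\ne\emptyset]\le r^{-\frac{\alpha}{2}(1-\frac1\alpha)^2+o(1)}$, summable for every $\alpha>1$, and Borel--Cantelli concludes. This single argument covers all $\alpha>1$ uniformly.
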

Remarkably, the above results do not depend on the 
value of~$s$. This is due to the fact that
(recall~\eqref{properties_BRI_ii'}), for large~$x$,
\[
 \IP[x\in \D_s(\V^\alpha)] \approx 
\|x\|^{-\frac{\alpha}{1-\frac{\ln s}{2\ln x}}},
\]
so the exponent approaches~$\alpha$ as $x\to \infty$ for any fixed~$s$.
Notice, however, that for very small or very large
values of~$s$ this convergence can be quite slow. 

Now, we state our results for the Brownian motion on the torus. 
Let~$(X_t, t\geq 0)$ be the Brownian motion on~$\R^2_n=\R^2/n\Z^2$
with~$X_0$ chosen uniformly at random\footnote{The reader may 
wonder at this point why we consider a torus 
of linear size~$n$. By scaling 
our results can be formulated on the torus of unit size, 
replacing the Wiener sausage's radius~$1$ by $\eps=1/n$. 
The reason for our choice 
is that in this paper
we study $\BRI(\alpha, b)$ 
with a fixed radius $b=1$, which corresponds to the former case.}.
Define
\begin{equation} \nn
\X_t^{(n)}=\{X_s, s\leq t\} \subset\R^2_n
\end{equation}
to be the set of 
points hit by the Brownian trajectory until time~$t$. 
The Wiener sausage at time $t$ is the set of points on the 
torus at distance less than or equal to 1 from the set $\X_t^{(n)}$.
The cover time is the time when the Wiener sausage covers 
the whole torus.
Denote by $\Upsilon_n:\R^2 \to \R^2_n$ 
the natural projection modulo~$n$:
$\Upsilon_n(x,y)=(x \mod n, y \mod n)$. 
% the natural projection modulo~$n$. 
Then, if~$W_0$ were chosen uniformly at random 
on any fixed $n\times n$ square with sides parallel to the axes,
 we can write $X_t=\Upsilon_n (W_t)$.
% Consistently, we will use 
% the same notation for (discrete Euclidean) balls on~$\Z^2_n$:
Similarly,
$\B(y,r)\subset \R^2_n$ is defined by $\B(y,r)=\Upsilon_n \B(z,r)$,
where $z\in\R^2$ is such that $\Upsilon_n z = y$.  
Define also
\[
t_\alpha:=\frac{2\alpha}{\pi}n^2\ln^2 n;
\]
it was proved in the seminal paper \cite{DPRZ04} that 
 $\alpha=1$ corresponds to the leading-order term of the 
expected cover time of the torus, see also \cite{BK14} for the next leading term and \cite{Abe17} in the discrete case.
In the following
theorem, we prove that, given that the unit ball
 is unvisited by the Brownian motion, the law
of the uncovered set around~$0$ at time~$t_\alpha$
is close to that of $\BRI(\alpha)$:
\begin{theo}
\label{t_torus} 
Let $\alpha>0$ and~$A$ be a compact subset of~$\R^2$ 
such that $\B(1)\subset A$. Then, we have 
\begin{equation}
\label{eq_torus}
 \lim_{n\to\infty}\IP\big[\Upsilon_n A \cap 
   \X_{t_\alpha}^{(n)}=\emptyset \mid \B(1)\cap 
   \X_{t_\alpha}^{(n)}=\emptyset\big]
  = \exp\big(-\pi\alpha\capa(A)\big).
\end{equation}
\end{theo}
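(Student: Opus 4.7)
The plan is to adapt the excursion-decomposition strategy used in the discrete analogue~\cite{CPV16}: decompose the trajectory of~$X$ on the torus into pieces that, under the conditioning $\{\B(1)\cap\X_{t_\alpha}^{(n)}=\emptyset\}$, can be identified via Lemma~\ref{l_conn_W_hW} with trajectories of~$\hW$, and then match the resulting structure with the Wiener-moustache construction of $\BRI(\alpha)$ so as to invoke~\eqref{eq_vacant_Bro}.

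First I would fix an intermediate scale $r_n\to\infty$ with $r_n=o(n)$, say $r_n=\sqrt n$, so that on $\B(r_n)$ the lift of~$X$ to $\R^2$ is well approximated by planar Brownian motion. Define the alternating stopping times $\sigma_1^{\downarrow}<\sigma_1^{\uparrow}<\sigma_2^{\downarrow}<\sigma_2^{\uparrow}<\cdots$, where $\sigma_k^{\downarrow}$ is the first hit of $\partial\B(1)$ after the previous $\sigma^{\uparrow}$ (starting from~$X_0$) and $\sigma_k^{\uparrow}$ is the subsequent first return to $\partial\B(r_n)$. Let $N_n$ be the number of completed down-crossings in $[0,t_\alpha]$. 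Using sharp hitting-time asymptotics for Brownian motion on the torus (analogous to those developed in~\cite{DPRZ04,BK14}), $\E[\sigma_{k+1}^{\downarrow}-\sigma_k^{\downarrow}]=(n^2\ln n/\pi)(1+o(1))$; together with a second-moment estimate this yields $N_n/\ln n\to 2\alpha$ in probability, and the event $\{\B(1)\cap\X_{t_\alpha}^{(n)}=\emptyset\}$ coincides with $\{N_n=0\}$ up to negligible probability (coming only from the initial segment of the walk, since $X_0$ lands outside $\B(r_n)$ with overwhelming probability).

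Second, on $\{N_n=0\}$, between successive visits of~$X$ to $\partial\B(r_n)$ the walk performs an excursion into $\B(r_n)\setminus\B(1)$ avoiding $\B(1)$; by Lemma~\ref{l_conn_W_hW} each such excursion is distributed as the corresponding piece of the $\hW$-diffusion started at the entry point on $\partial\B(r_n)$. Mixing of the torus motion at scale~$n$ makes these entry points asymptotically i.i.d.\ uniform on $\partial\B(r_n)$, which is the limiting harmonic measure from infinity. The conditional trace of~$X$ on any fixed compact set containing~$A$ can then be compared directly with the construction of Proposition~\ref{p_indeed_BRI}(ii) of $\BRI(\alpha)$, yielding~\eqref{eq_torus} via~\eqref{eq_vacant_Bro}. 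Equivalently one can compute by hand: each $\hW$-excursion hits~$A$ with probability $\pi\capa(A)/(2\ln r_n)+o(1/\ln n)$ (using~\eqref{hitting_condBM} and its capacity-theoretic extension to general compacts), the number of relevant excursions is concentrated at a value such that the expected number of hits tends to $\pi\alpha\capa(A)$, and a Poisson-type limit then gives the result.

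The main obstacle will be the quantitative control of these two approximations: one needs hitting-time estimates on the torus sharp enough that the fluctuations of~$N_n$ are negligible even under the conditioning on the polynomially small event $\{N_n=0\}$, together with a quantitative rate for the mixing of the excursion-starting-points on $\partial\B(r_n)$. A technicality specific to the continuous setting is that planar Brownian motion re-enters a smooth curve instantaneously, so the alternating stopping times at $\partial\B(1)$ degenerate; the cleanest route is to insert a buffer scale $\partial\B(1+\varepsilon)$, work with the corresponding $\varepsilon$-excursions (again identified with pieces of $\hW$ via Lemma~\ref{l_conn_W_hW}), and send $\varepsilon\to 0$ after $n\to\infty$, or alternatively work directly with the one-dimensional radial diffusion~$\RR$ of~\eqref{df_Rt} for which excursion theory applies cleanly. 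Aside from these technicalities, the argument is a mechanical translation of the proof of the discrete analogue in~\cite{CPV16}.
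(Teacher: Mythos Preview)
Your general strategy---decompose the trajectory into excursions, identify them with $\hW$-pieces via Lemma~\ref{l_conn_W_hW}, and pass to a Poisson-type limit---is the right one and is what the paper does. But your implementation of the excursion decomposition has a genuine gap in the choice of radii. In your first step you take the inner radius to be~$\partial\B(1)$ itself, so that~$N_n$ counts down-crossings to~$\partial\B(1)$; then $\{N_n=0\}$ \emph{is} the conditioning event, and on it there are no excursions to analyse. In your second step you switch to ``excursions between successive visits of~$X$ to~$\partial\B(r_n)$'', but without a second radius this is ill-defined: the instantaneous-reentry issue you flag at the end is precisely this problem at scale~$r_n$, not at scale~$1+\varepsilon$, and your proposed buffer does not repair it. Even granting a second mesoscopic radius, two further obstacles appear: the concentration of the excursion count at scales~$\sqrt n$ is far too weak to survive conditioning on an event of probability~$n^{-2\alpha+o(1)}$ (check the bound in Lemma~\ref{l_excursions_torus} with $R\asymp\sqrt n$), and your arithmetic ``number of excursions times $\pi\capa(A)/(2\ln r_n)\to\pi\alpha\capa(A)$'' does not close with any admissible pair of radii once you compute the excursion count correctly.

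The paper's remedy is to take \emph{both} excursion radii at macroscopic scale, namely $\partial\B\big(\frac{n}{3\ln n}\big)$ and $\partial\B(n/3)$. This buys three things at once. First, the excursion count~$N_\alpha$ is well-defined and, by Lemma~\ref{l_excursions_torus}, concentrates around $2\alpha\ln^2 n/\ln\ln n$ with a large-deviation rate strong enough to beat~$n^{-2\alpha}$, so the count is essentially unchanged by the conditioning (this is~\eqref{cond_numb_exc}). Second, on each excursion the Radon--Nikodym derivative~\eqref{RN_torus} of the conditioned process~$\tX$ against~$\hW$ is controlled through the regularity estimate~\eqref{regularity_h}, yielding~\eqref{tX_hW_excursions}. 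Third, by~\eqref{cond_avoid_A} each $\hW$-excursion from $\partial\B\big(\frac{n}{3\ln n}\big)$ to $\partial\B(n/3)$ hits~$A$ with probability $(1+o(1))\frac{\pi}{2}\capa(A)\frac{\ln\ln n}{\ln^2 n}$, and the product over~$N_\alpha$ excursions gives $\exp(-\pi\alpha\capa(A))$. The idea you are missing is that the excursion radii must sit far from~$\B(1)$ so that the \emph{count} decouples from the conditioning; only the \emph{behaviour} of each excursion near~$A$ is then affected, and that is exactly what Lemma~\ref{l_conn_W_hW} turns into a~$\hW$-computation.
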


In fact, Theorems~\ref{t_basic_prop}, \ref{t_basic_size},
and~\ref{t_torus}
can be seen as the continuous counterparts of 
Theorems~2.3, 2.5, and~2.6 of~\cite{CPV16}
and also Theorem~1.2 of~\cite{CP16}
 (for the critical case $\alpha = 1$). 
% (and the proofs are all 
% analogous\dots)

 From this point on, we discuss some facts specific to the 
continuous-time case (i.e., the \emph{Brownian}
random interlacements). 
We first describe the
scaling properties of two-dimensional Brownian interlacements:
\begin{theo}
\label{t_BRI_image}
For any positive $c$ and~$\lambda$,
  it holds that  
\[
c\times
%\big(
\BRI(\alpha;b)
%\big)
 \eqlaw \BRI(\alpha;cb)\quad 
\text{ and } \quad
\big(\!\BRI(\alpha;b)\big)^\lambda  \eqlaw \BRI(\alpha/\lambda;b^\lambda)\;.
\] 
% \textbf{this is tricky: so far, can show this only 
% for rational $\lambda$'s!}
\end{theo}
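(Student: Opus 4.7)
The two claims have different flavors: the first is a direct scaling of the Poisson intensity, while the second requires understanding how the randomized power map interacts with the distribution of the Wiener moustache. For the first, $c\cdot\BRI(\alpha;b)\eqlaw\BRI(\alpha;cb)$: substituting $\tilde\rho_k:=c\rho_k^\alpha$ in \eqref{eq_BRI} yields $c\cdot\BRI(\alpha;b)=\bigcup_{k:\tilde\rho_k\geq cb}\tilde\rho_k\eta_k$. The intensity $(2\alpha/\rho)\,d\rho$ is invariant under $\rho\mapsto c\rho$, so $(\tilde\rho_k)_k$ is again a Poisson process with the same intensity, independent of the i.i.d.\ Wiener moustaches $\eta_k$; this is precisely the definition of $\BRI(\alpha;cb)$.

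For the second relation, the plan is first to handle the Poisson bookkeeping. A point $(\rho_k^\alpha r,\theta)$ in $\rho_k^\alpha\eta_k$ is sent by the randomized power map to $\big((\rho_k^\alpha)^\lambda r^\lambda,\,\lambda\theta+(1-\lambda)\xi_k\big)$. Setting $\tilde\rho_k:=(\rho_k^\alpha)^\lambda$, the change of variable $\tilde\rho=\rho^\lambda$ turns the intensity $(2\alpha/\rho)\,d\rho$ into $(2(\alpha/\lambda)/\tilde\rho)\,d\tilde\rho$, and the truncation $\rho_k^\alpha\geq b$ becomes $\tilde\rho_k\geq b^\lambda$. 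This matches exactly the Poisson data of $\BRI(\alpha/\lambda;b^\lambda)$.

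It then remains to identify the image of each scaled moustache as $\tilde\rho_k\tilde\eta_k$ for a standard Wiener moustache $\tilde\eta_k$, independent across $k$. I would argue via the conformal invariance of planar Brownian motion: the lifted map $f:(r,\theta)\mapsto(r^\lambda,\lambda\theta+(1-\lambda)\xi_k)$ corresponds, via \eqref{df_power}, to a holomorphic map satisfying $\|f(z)\|=\|z\|^\lambda$. It therefore preserves the domain $\{\|z\|>1\}$ and multiplies the harmonic function $h(z)=\ln\|z\|$ by the constant~$\lambda$. Consequently the weight $h(W_t)/h(W_0)$ in the Doob $h$-transform \eqref{def:hat_W} is unchanged, so $f$ maps $\hW$-paths to time-changed $\hW$-paths; since the moustache is defined as a geometric set, the time change is immaterial. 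Equivalently, one may verify this by applying It\^o's formula to the polar SDEs \eqref{df_Rt}-\eqref{df_Theta_t} together with the deterministic time change $d\tilde t=\lambda^2\RR_t^{2(\lambda-1)}\,dt$, after which the pair $(\RR_t^\lambda,\,\lambda\Theta_t+(1-\lambda)\xi_k)$ satisfies precisely \eqref{df_Rt}-\eqref{df_Theta_t} in the new time variable. Combined with the uniformity of the new initial angle $\Theta_k^{(\lambda)}\bmod 2\pi$ recorded in the construction, this identifies $\tilde\eta_k$ as a Wiener moustache, with the required independence across $k$ being inherited from the independent $\xi_k$.

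The main obstacle I anticipate is the passage from the process-level conformal equivalence --- which only describes $\hW$ up to a random time change --- to the set-level identification of the image as a genuine Wiener moustache, including a careful treatment of the initial condition $\|\hW_0\|=1$ supplied by the Girsanov construction of Definition~\ref{df_RR}. The Poisson bookkeeping and the invariance of the intensity $2\alpha/\rho\,d\rho$ under $\rho\mapsto\rho^\lambda$ (up to the factor $1/\lambda$) are, by contrast, a routine application of the Mapping Theorem for Poisson processes.
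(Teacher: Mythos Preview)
Your proposal is correct and follows essentially the same approach as the paper: the Mapping Theorem for Poisson processes handles the intensity $2\alpha/\rho\,d\rho$ under $\rho\mapsto c\rho^\lambda$, and conformal invariance of planar Brownian trajectories (together with the fact that the conditioning survives, which the paper justifies via Lemma~\ref{l_conn_W_hW} rather than your direct $h$-transform argument) identifies the image moustaches. Your write-up is in fact more explicit than the paper's two-sentence proof, particularly on the role of the auxiliary angles~$\xi_k$ and the time change; the ``obstacle'' you flag about the boundary start $\|\hW_0\|=1$ is real but minor, and the paper does not address it either.
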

In a more compact form, the claim is 
$c\times\big(\!\BRI(\alpha;b)\big)^\lambda  
\eqlaw \BRI(\alpha/\lambda;cb^\lambda)$.
\medskip

%%%%%%%%   ICIII   \break
%\fc  {\bf The Gumbel process.}
Next, we discuss some fine properties of two-dimensional Brownian random interlacements as a process indexed by $\alpha$.
We emphasize that the coupling between the different $\BRI$'s as $\alpha$ varies becomes essential in the forthcoming considerations. We recall the definition of $\BRI$  
from Remark~\ref{rem_simultaneous}.
% Section \ref{df_BRI}.

 For %$x\notin \B(1)$
 $x \in \R^2$  let 
\[
\Phi_x(\alpha)= {\rm dist}\big(x, \BRI(\alpha)\big)
% =\sup\{s>0:\B(x,s)  \cap\BRI(\alpha) = \emptyset\}
\]
 be the Euclidean distance from~$x$ 
to the closest trajectory in the interlacements. 
 Since $\Phi_0(\alpha) = \rho_1^\alpha$,
 by~\eqref{capa_disk} and~\eqref{eq_vacant_Bro} 
we see that for $s \geq 1$,
\[
\IP[\Phi_0(\alpha) >s] = \IP[\B(s)\subset\V^\alpha]
 = s^{-2\alpha}\;,
 \]
 that is, for all $\alpha>0$
\begin{equation}
\label{Phi_0_exact}
 2\alpha\ln\Phi_0(\alpha) \eqlaw 
 \text{Exp(1) random variable. } 
\end{equation}
It is interesting to note that, for all $x$,  $(\Phi_x(\alpha),\alpha>0)$
is an homogeneous  Markov process:
\begin{theo}
\label{t_Phi_process} 
% [{\bf Attention: j'ai supprim\'e (ii).
% Enoncer ce resultat plutot simple comme une Proposition au lieu d'un Th. ?}]
 The process $(\Phi_x(\alpha),\alpha>0)$ is a nonincreasing Markov 
pure-jump process. 
Precisely,
\begin{itemize}
 \item[(i)] given $\Phi_x(\alpha)=r$, 
 the jump rate is $\pi\capa(\B(1)\cup\B(x,r))$,
 and the process jumps to the state
 $V^{(x,r)}<r$, where~$V^{(x,r)}$ is 
a random variable with distribution
\[
 \IP[V^{(x,r)}<s] 
    = \frac{\capa(\B(1)\cup\B(x,s))}{\capa(\B(1)\cup\B(x,r))}, 
\]
for $r>s>{\rm dist}(x, \B(1)^\complement )= (1-\|x\|)^+$.
%Here, $r>0$ for $x \notin \B(1)$, and $r>1-\|x\|$ if $x \in \B(1)$.
% \item[(ii)] in particular,
% given $\Phi_x(\alpha)=r\in (0,1)$, 
%% the jump rate is $\pi\capa(\B(1)\cup\B(x,r))$,
%% and 
%the process jumps to the state $r^{Z^{(x,r)}}$, where~$Z^{(x,r)}$ is 
%a random variable with distribution
%\[
% \IP[Z^{(x,r)}>\gamma] 
%    = \frac{\capa(\B(1)\cup\B(x,r^\gamma))}{\capa(\B(1)\cup\B(x,r))};
%\]
% \item[(iii)] 

%%%%%% [{\bf Attention: facteur $\pi$ dans le taux}]
 %%%%%%
 \item[(ii)] given $\Phi_0(\alpha)=s>1$, the jump rate 
 %{for $\Phi^\cdot_0$ }
 is $2\ln s$,
 and the process jumps to the state $s^U$, where~$U$ is 
 a Uniform$[0,1]$ random variable. 
\end{itemize}
\end{theo}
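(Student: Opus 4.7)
\medskip\noindent\textbf{Proof proposal.}
The plan is to exploit the coupling from Remark~\ref{rem_simultaneous}, which realises the family $(\BRI(\alpha))_{\alpha>0}$ as monotone in~$\alpha$ by adding Wiener moustaches according to a rate-one Poisson point process on $\R\times\R_+$ (with second coordinate equal to the threshold $2\alpha$). Under this coupling, $\Phi_x(\alpha) = \inf_{T\in\BRI(\alpha)} \dist(x,T)$ is nonincreasing and c\`adl\`ag in~$\alpha$, and changes value only at those levels~$\alpha$ at which the newly activated trajectory meets $\B(x,\Phi_x(\alpha-))$; between such levels $\Phi_x$ is constant, so the process is pure-jump.

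For the jump intensity, combine~\eqref{superp_BRI} with~\eqref{eq_vacant_Bro}: the trajectories activated during the increment from~$\alpha$ to~$\alpha+\beta$ form a fresh independent copy of $\BRI(\beta)$, hence for any compact $A\supset\B(1)$,
\begin{equation*}
\IP\big[A \text{ is not hit by any trajectory added in } (\alpha,\alpha+\beta]\big] = \exp\big(-\pi\beta\capa(A)\big).
\end{equation*}
Therefore new trajectories meet~$A$ at Poisson rate $\pi\capa(A)$ in the variable~$\alpha$. Conditionally on $\Phi_x(\alpha)=r$, which amounts to $\B(1)\cup\B(x,r)\subset\V^\alpha$, the next jump arrives at rate $\pi\capa(\B(1)\cup\B(x,r))$, which gives the first assertion of~(i). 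The same identity with the smaller set $\B(1)\cup\B(x,s)$ for $s\in((1-\|x\|)^+,r)$ shows that jumps with target below~$s$ arrive at rate $\pi\capa(\B(1)\cup\B(x,s))$; dividing by the total jump rate yields the distribution of $V^{(x,r)}$ stated in~(i).

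The Markov property is immediate from the coupling: conditionally on the $\sigma$-field generated by $\BRI(\alpha)$, the process of trajectories added on $(\alpha,\infty)$ is an independent fresh $\BRI$, and the only feature of the past influencing the future of~$\Phi_x$ is the current value $\Phi_x(\alpha)$ (i.e.\ the maximal radius~$r$ with $\B(x,r)\subset\V^\alpha$). Part~(ii) is the specialization~$x=0$: by~\eqref{capa_disk}, $\capa(\B(r))=\tfrac{2}{\pi}\ln r$ for $r\geq1$, so the jump rate becomes~$2\ln r$, and the jump law has distribution function $\ln s/\ln r$ on~$(1,r)$, which is exactly the law of $r^U$ with $U\sim\mathrm{Uniform}[0,1]$.

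The one point that requires genuine care is the c\`adl\`ag pure-jump character of $\alpha\mapsto\Phi_x(\alpha)$, since new moustaches arrive at \emph{infinite} total rate in the coupled construction. This is resolved by the thinning observation above: only those moustaches that meet $\B(x,\Phi_x(\alpha))$ produce a jump, and these form a locally finite point process because the associated rate $\pi\capa(\B(1)\cup\B(x,r))$ is finite for each $r>(1-\|x\|)^+$ and decreases with~$r$. A routine Borel--Cantelli argument then shows that finitely many jumps occur on any compact subinterval of $(0,\infty)$, confirming that the process is genuinely c\`adl\`ag pure-jump as claimed.
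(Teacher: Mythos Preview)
Your argument is correct and follows essentially the same route as the paper: both use the superposition property~\eqref{superp_BRI} together with the vacant-set formula~\eqref{eq_vacant_Bro} to read off the jump rate and the post-jump law, and both obtain the Markov property from the independence of the increments of the underlying Poisson construction. Two minor differences are worth noting: the paper proves~(ii) directly from the identification $\Phi_0(\alpha)=\rho_1^\alpha$ and the explicit Poisson description of the $\rho_k^\alpha$'s (Definition~\ref{df_BRI} and~\eqref{rho_exponential}), whereas you obtain it as the specialization $x=0$ of~(i) via~\eqref{capa_disk}; and you supply the c\`adl\`ag pure-jump justification (via thinning and local finiteness) that the paper leaves implicit.
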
 
In view of the above, we consider the time-changed process $Y$, which will appear as one of the central objects of this paper,
\begin{equation}  \label{def:Y}
Y(\beta) = \beta + \ln \ln \Phi_0(e^\beta) +  \ln 2 \;,\qquad \beta 
\in \R.
\end{equation}
 %%%%%%
\begin{theo}
\label{t_Y_process}
 The process $Y$ is a stationary Markov process with unit drift 
in the positive direction, 
jump rate~$e^y$ and jump distribution given by the negative of 
an $\text{\rm{Exp}}(1)$ random variable. It solves 
 the stochastic differential equation 
\begin{equation}
\nn %\label{def:Y}
d Y(\beta) = d \beta - {\mathcal E}_\beta dN(\beta) ,
\end{equation}
where $N$ is a point process with stochastic 
intensity $\exp Y(\beta)$ and 
marks ${\mathcal E}_\beta$ with $\text{\rm{Exp}}(1)$-distribution.
 Its infinitesimal generator is given on $C^1$ functions
 $f: \R \to \R$ by
\begin{equation}\nn
{\mathcal L}f(y)= f'(y) + e^y \int_0^{+\8} 
\big[f(y-u)-f(y)\big] e^{-u} du\;.
\end{equation}
Its invariant measure is the negative of a standard Gumbel distribution, it has density $\exp\{y-e^y\}$ on $\R$.
\end{theo}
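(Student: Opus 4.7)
The plan is to read off the jump dynamics of the process from Theorem~\ref{t_Phi_process}(ii) via the bijective change of variables $\alpha = e^\beta$ and $\Phi_0 \mapsto Y = \beta + \ln\ln\Phi_0 + \ln 2$, and then identify the invariant law by computing the one-dimensional marginal directly from~\eqref{Phi_0_exact}.

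First I would translate the dynamics. Between the jumps of $\alpha \mapsto \Phi_0(\alpha)$, the quantity $\ln\ln\Phi_0(e^\beta)$ is constant in $\beta$, so $Y(\beta)$ has slope $+1$ and gives the claimed unit drift. At a jump, Theorem~\ref{t_Phi_process}(ii) says $\Phi_0$ moves from $s$ to $s^U$ with $U$ uniform on $[0,1]$, so $\ln\ln\Phi_0$ decreases by $-\ln U$, that is, by an $\text{\rm Exp}(1)$ random variable; hence $Y$ decreases by the same amount, giving the claimed negative exponential marks. For the rate, the $\alpha$-time rate $2\ln\Phi_0(\alpha)$ becomes, in $\beta$-time, $2\alpha\ln\Phi_0(e^\beta) = 2e^\beta\ln\Phi_0(e^\beta)$; substituting the defining identity $\ln\Phi_0(e^\beta) = \tfrac{1}{2}e^{Y(\beta)-\beta}$ turns this into $e^{Y(\beta)}$, the claimed intensity. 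The Markov property in $\beta$ is inherited from that of $\alpha \mapsto \Phi_0(\alpha)$, composed with the deterministic bijection $\alpha \leftrightarrow \beta$ and the (strictly increasing) transformation $\Phi_0 \leftrightarrow Y$. These ingredients packaged together yield the SDE and the integro-differential generator displayed in the statement.

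Next I would identify the invariant law. Using~\eqref{Phi_0_exact} with $\alpha = e^\beta$, the random variable $E := 2e^\beta \ln\Phi_0(e^\beta)$ is $\text{\rm Exp}(1)$-distributed, and a direct rewriting gives
\begin{equation*}
Y(\beta) \;=\; \beta + \ln\ln\Phi_0(e^\beta) + \ln 2 \;=\; \ln E,
\end{equation*}
so $Y(\beta) \eqlaw \ln E$ for every $\beta \in \R$. Computing the distribution function, $\IP[\ln E \leq y] = 1 - \exp(-e^y)$, which has density $e^{y - e^y}$ on $\R$, the negative standard Gumbel. Since this marginal does not depend on $\beta$ and $Y$ is a time-homogeneous Markov process, stationarity is immediate.

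The only step requiring a bit of work is the bookkeeping of the time change, in particular checking that the $\alpha$-time intensity $2\ln\Phi_0(\alpha)$ transforms into the $\beta$-time intensity $e^{Y(\beta)}$ via the correct Jacobian $d\alpha/d\beta = e^\beta$; once this is in place, everything else is a direct reformulation. As an independent sanity check of the generator computation (and a proof route that does not invoke~\eqref{Phi_0_exact}), one may verify $\int \mathcal{L} f(y)\, e^{y-e^y}\, dy = 0$ for smooth compactly supported $f$: integration by parts handles the $f'$ term, and the substitution $t = e^y$ inside the jump term uses $\int_v^\infty e^{y - e^y}\, dy = e^{-e^v}$, after which the drift and jump contributions cancel. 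This provides an alternative confirmation that the negative Gumbel density is indeed invariant for $\mathcal{L}$.
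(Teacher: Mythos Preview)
Your derivation of the dynamics is essentially identical to the paper's: both apply the change of variables $\alpha=e^\beta$, $r\mapsto y=\beta+\ln\ln r+\ln 2$ to Theorem~\ref{t_Phi_process}(ii), read off the unit drift between jumps, convert the jump rate $2\ln s$ into $e^y$ via the Jacobian $d\alpha/d\beta=e^\beta$, and recognise that $s\mapsto s^U$ becomes a downward jump of size $-\ln U\sim\text{Exp}(1)$.

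Where you differ is in the verification of the invariant law. The paper computes the adjoint generator $\mathcal{L}^*$ and checks by hand that $\mathcal{L}^* g=0$ for $g(y)=e^{y-e^y}$. You instead invoke~\eqref{Phi_0_exact} directly to conclude $Y(\beta)=\ln E$ with $E\sim\text{Exp}(1)$ for every~$\beta$, which immediately gives the negative Gumbel marginal and hence stationarity; you then mention the $\int\mathcal{L}f\cdot g=0$ computation only as a cross-check. Your route is a bit shorter and exploits a fact already proved in the paper, while the adjoint calculation is the standard self-contained argument that does not rely on knowing the marginal in advance. Both are correct and of comparable length.
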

%%%
Note that the invariant measure is in agreement with \eqref{Phi_0_exact}, since the negative of logarithm of an exponentially distributed random variable is a Gumbel.

\begin{rem}
The process~$Y$ relates to models for dynamics of 
TCP (Transmission Control Protocol) for the internet:
In one of the popular congestion control protocols, 
known as MIMD (Multiplicative Increase Multiplicative Decrease), 
 the throughput (transmission flow, denoted by $X$) 
is linearly increased as long as no loss occurs in the 
transmission, and divided by 2 when a collision is detected. 
 The collision rate is proportional (say, equal) to the throughput.
Following~\cite[Eq.\ (3)]{baccelli09}, 
$X(t)$ solves $d X= Xd t -  (X/2) dM(t)$ with~$M$ 
a point process with stochastic rate~$X$.
Thus, if at every collision the throughput would be divided by the exponential of an independent exponential variable (instead of by 2), then the 2 models would be related by $X=e^Y$. 
The authors of  \cite{baccelli07, baccelli09} analyse the system, proving existence of the equilibrium, 
 formulas for moments and density using Mellin transform. 
The explicit (Gumbel) solution in the case of exponential 
jumps seems to be new.
\end{rem}

The asymptotics of the process $\Phi_x$ for $x\neq 0$ is remarkable.
First, note that $\Phi_x(\alpha) \to (1-\|x\|)^+$  a.s.\ as $\alpha\to\infty$.
Hence, we will study the process $\Phi_x$ under different scales and a common exponential time-change, depending on  $x$ being outside   the unit circle, on the circle or inside. Define, for $\beta \in \R$,
\begin{align} 
\label{def:Y_^process}
Y_x^{out}(\beta) &= \beta - \ln |\ln \Phi_x(e^\beta)| 
+ \ln \big({2}\ln^2 \|x\|\big), \quad \text{ for } \|x\|>1,  \nn \\
Y_x^{\6}(\beta) &= \beta + 2  \ln \Phi_x(e^\beta),  \quad \text{ for } 
 \|x\|=1,\phantom{\sum^B} \\
Y_x^{in}(\beta) &= \beta +  \frac{3}{2} \ln \big(\Phi_x(e^\beta)-1+\|x\|\big) + \ln \frac{3\pi}{4 \sqrt 2} \sqrt{ \frac{\|x\|}{1-\|x\|}}, \quad \text{ for }
\|x\| \in (0,1),  \nn 
\end{align}
%[Lien avec eqref{def:Y} ???] 
 with the convention in the first line 
 $ \ln |\ln \Phi_x(e^\beta)|  = -\8$ when $  \Phi_x(e^\beta)=1$. 
 The following result describes 
 %the aging phenomenon 
 the  behavior of $\Phi_x(\alpha)$ for large $\alpha$:
%%%%%%
\begin{theo}
\label{t_Y_^process} Let $Y(\cdot)$ denote the \emph{stationary} 
process defined in \eqref{def:Y}.
As $\beta_w \to +\8$,
% the different processes 
we have
% $Y^{out}(\beta_w + \cdot)$ for $x \notin \B(1)$,
% $Y^{\6}(\beta_w + \cdot)$ for $\|x\| =1$,
% and $Y^{in}(\beta_w + \cdot)$ for $\|x\| <1$, 
\[
\left.
\begin{array}{ll}
 {\rm for\ }   x \notin \B(1),&Y_x^{out}(\beta_w + \cdot)    \\
  {\rm for\ }    \|x\| =1, & Y_x^{\6}(\beta_w + \cdot)  \\
 {\rm for\ }      \|x\| \in(0,1), &Y_x^{in}(\beta_w + \cdot)  
\end{array}
\right\} \longrightarrow Y(\cdot)
\]
%converge in law to the stationary process $Y(\cdot)$ 
where the convergence holds in law in the Skorohod space ${\mathbb D}(\R^+; \R)$.
\end{theo}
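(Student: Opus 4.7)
The plan is to exploit the explicit pure-jump Markov description of $(\Phi_x(\alpha))_{\alpha>0}$ from Theorem~\ref{t_Phi_process} and to push it through the space-and-time transformation defining $Y_x^{\bullet}$. Combined with the capacity asymptotics in each of the three geometric regimes, this reduces the statement to verifying convergence of generators and of the initial marginal to those of the limit process $Y$ of Theorem~\ref{t_Y_process}; functional convergence in $\mathbb{D}(\R^+;\R)$ then follows from a standard Ethier--Kurtz type stability result for pure-jump Markov processes.

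Set $\kappa(r)=\pi\capa(\B(1)\cup\B(x,r))$ and let $T_\beta:r\mapsto y$ denote the state map in each line of \eqref{def:Y_^process}. After the time change $\alpha=e^\beta$, the process $\beta\mapsto Y_x^{\bullet}(\beta)$ is a time-inhomogeneous pure-jump process with rate $e^\beta\kappa(T_\beta^{-1}(y))$ at state $y$ and post-jump state $T_\beta(V^{(x,T_\beta^{-1}(y))})$. I would check, uniformly for $y$ in any compact interval, that as $\beta\to\infty$ these converge to the rate $e^y$ and to a $-\mathrm{Exp}(1)$ jump distribution, matching the description of $Y$ in Theorem~\ref{t_Y_process}.

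For case~1 ($\|x\|>1$), Theorem~\ref{t_basic_prop}(ii') together with $\IP[A\cap\BRI(\alpha)=\emptyset]=\exp(-\pi\alpha\capa(A))$ yields $\kappa(r)=\frac{2\ln^2\|x\|}{\ln\|x\|+\ell_x-\ln r}\,(1+o(1))$ as $r\downarrow 0$. Substituting $|\ln r|=2\ln^2\|x\|\,e^{\beta-y}$ gives $e^\beta\kappa(T_\beta^{-1}(y))\to e^y$, while $\kappa(s)/\kappa(r)\sim|\ln r|/|\ln s|$ translates, with $u=\ln(|\ln s|/|\ln r|)$, into $\IP[y-y'>u]\to e^{-u}$. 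Cases~2 and~3 go through the same scheme once the asymptotics $\kappa(r)\sim r^2$ for $\|x\|=1$ as $r\downarrow 0$ (blister capacity, via Lemma~\ref{l_cap_blister}) and $\kappa(r)\sim\frac{3\pi}{4\sqrt 2}\sqrt{\|x\|/(1-\|x\|)}\,(r-1+\|x\|)^{3/2}$ for $\|x\|<1$ as $r\downarrow 1-\|x\|$ (thin lune) are in hand: the constants in the definitions of $Y_x^{\6}$ and $Y_x^{in}$ have been tuned precisely to absorb these leading coefficients.

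For the initial marginal, one writes $\IP[Y_x^{\bullet}(\beta_w)>y]=\IP[\Phi_x(e^{\beta_w})>T_{\beta_w}^{-1}(y)]$ and uses $\IP[\B(x,r)\subset\V^\alpha]=\exp(-\alpha\kappa(r))$ with the same three capacity asymptotics to see that this converges to $e^{-e^y}$, the tail of the negative Gumbel invariant law of $Y$. The main obstacle is the derivation and uniform control of the capacity asymptotics in the two degenerate geometries of cases~2 (blister) and~3 (thin lune): the exponents~$2$ and $3/2$ must come with the stated prefactors and with remainders that are $o(1)$ uniformly for $y$ in compacts, so that the algebraic cancellations producing exactly rate $e^y$ and $-\mathrm{Exp}(1)$ jumps in the limit survive. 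Once these capacity expansions are in place, combining generator convergence with the marginal convergence at $\beta_w$ yields the claimed functional convergence in $\mathbb{D}(\R^+;\R)$.
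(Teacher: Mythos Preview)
Your proposal is correct and follows essentially the same route as the paper: both push the Markov description of $\Phi_x$ from Theorem~\ref{t_Phi_process} through the space-time change, reduce to showing $\pi e^\beta\capa(\B(1)\cup\B(x,T_\beta^{-1}(y)))\to e^y$ uniformly on compacts via the capacity asymptotics of Lemmas~\ref{l_cap_distantdisk}(iii), \ref{l_cap_blister} and~\ref{l_cap_interior}, check convergence of the one-dimensional marginal, and conclude by a compactness/martingale-problem argument (the paper invokes Jacod--Shiryaev rather than Ethier--Kurtz, but the mechanism is the same). One small slip: in case~3 your stated leading constant for $\kappa(r)$ is the reciprocal of the correct one coming from Lemma~\ref{l_cap_interior}; since you note that the constants in \eqref{def:Y_^process} are tuned to absorb the prefactors, this does not affect the argument.
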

%%%%%%%%%  
That is, the large $\alpha$ behavior of $\Phi_x(\alpha)$
 has three different regimes according to $|x|$ being ouside, 
 on, or inside the unit circle.
  Although the scalings are different in all these regimes,
 surprisingly enough, the scaling limit is the same process $Y$.   
At the present moment, the authors have no heuristic 
explanation of why such a result holds.

Again, we observe that the invariant measure of the limit $Y$ fits with the asymptotic of the marginal distribution of $\Phi_x(\alpha)$ in Theorem \ref{t_freedisk} below. 
Our last theorem is a finer result, in the sense that $x$ does not need to be fixed.
We obtain the asymptotic law of $\Phi_x(\alpha)$
for~$x$ such that $\|x\|\geq 1$, in the regime when the 
number of trajectories which are ``close'' to~$x$
goes to infinity.

\begin{theo}
\label{t_freedisk}
For any $s>0$ and $x\notin\B(1)$
it holds that
\begin{equation}
\label{eq_freedisk}
  \IP\Big[\frac{2\alpha\ln^2\|x\|}{\ln(\Phi_x(\alpha)^{-1})}>s\Big] 
= e^{-s}\Big(1+O\Big(
\textstyle\frac{s(|\ell_x|+\ln\|x\|)}{\alpha\ln^2\|x\|}
+
\frac{\ln\theta_{x,r_s}}{\theta_{x,r_s}}\big(
\frac{1}{\ln(\|x\|+1)}+\frac{1}{\ln\theta_{x,r_s}
+\ln\|x\|}\big)\Big)\Big) ,
\end{equation}
where $r_s=\exp(-2s^{-1}\alpha\ln^2\|x\|)$
and $\theta_{x,r_s}=\frac{\|x\|-1}{r_s}$.
For $x\in \partial\B(1)$ and~$s>0$, it holds that
\begin{equation}
\label{eq_freedisk_boundary}
\IP\big[\alpha (\Phi_x(\alpha))^2>s\big] 
 = e^{-s}\big(1+O\big(\big(
\textstyle\frac{s}{\alpha}\big)^{3/2}\big)\big).
\end{equation}
\end{theo}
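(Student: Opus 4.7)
\noindent\textbf{Proof proposal for Theorem~\ref{t_freedisk}.}

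The plan is to reduce both statements to the fundamental identity $\IP[\Phi_x(\alpha)>r] = \exp(-\pi\alpha\capa(\B(x,r)\cup\B(1)))$ and then substitute the appropriate capacity asymptotics already available elsewhere in the paper. Indeed, since $\BRI(\alpha)\subset \R^2\setminus\B(1)$, the event $\{\Phi_x(\alpha)>r\}$ coincides with $\{\B(x,r)\subset \V^\alpha\}$, which in turn coincides with $\{(\B(x,r)\cup\B(1))\cap \BRI(\alpha)=\emptyset\}$. Since $\B(1)\subset \B(x,r)\cup\B(1)$, Proposition~\ref{p_indeed_BRI}(i) yields the exponential form above with the capacity of a two-disk configuration.

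For~\eqref{eq_freedisk} (the case $x\notin\B(1)$), I will set $r=r_s=\exp(-2s^{-1}\alpha\ln^2\|x\|)$, so that $\{\Phi_x(\alpha)>r_s\}$ is exactly the event whose probability is computed in the theorem. Matching the previously established expansion~\eqref{properties_BRI_ii'} of Theorem~\ref{t_basic_prop}(ii) with the exponential form yields
\[
\pi\capa(\B(x,r)\cup\B(1))=\frac{2\ln^2\|x\|}{\ln\|x\|+\ell_x-\ln r}\bigl(1+O(E_{x,r})\bigr),
\]
where $E_{x,r}$ is the error indicated in~\eqref{properties_BRI_ii'}. Plugging $r=r_s$ and using $-\ln r_s = 2\alpha\ln^2\|x\|/s$, the main term becomes
\[
\alpha\cdot\frac{2\ln^2\|x\|}{\ln\|x\|+\ell_x+\tfrac{2\alpha\ln^2\|x\|}{s}}
= \frac{s}{1+\frac{s(\ln\|x\|+\ell_x)}{2\alpha\ln^2\|x\|}}
= s-\frac{s^2(\ln\|x\|+\ell_x)}{2\alpha\ln^2\|x\|}+O(\cdots).
\]
Exponentiating and combining with the multiplicative error from the capacity estimate produces precisely the two error contributions in~\eqref{eq_freedisk}: one of order $s(|\ell_x|+\ln\|x\|)/(\alpha\ln^2\|x\|)$ from the Taylor expansion of the exponential, and the other from $E_{x,r_s}$.

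For~\eqref{eq_freedisk_boundary} (the case $x\in\partial\B(1)$), the same reduction gives $\IP[\Phi_x(\alpha)>r]=\exp(-\pi\alpha\capa(\B(x,r)\cup\B(1)))$, but now the configuration is two mutually tangent disks (a small ``blister'' attached to~$\B(1)$). The capacity of this ``blister'' configuration is handled by Lemma~\ref{l_cap_blister} (referenced in the remark after Theorem~\ref{t_basic_prop}), which I will use to obtain an asymptotic expansion of the form $\pi\capa(\B(x,r)\cup\B(1))=r^2(1+O(r))$ as $r\to0$. Substituting $r=\sqrt{s/\alpha}$ then gives $\pi\alpha\capa(\B(x,r)\cup\B(1))=s(1+O(\sqrt{s/\alpha}))$, and one more Taylor expansion of the exponential produces the claimed error $O((s/\alpha)^{3/2})$.

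The main obstacle lies not in the overall strategy, which is a clean substitution, but in the bookkeeping of the two independent error sources (from capacity estimation and from expanding $\exp$) and in verifying that the capacity asymptotic provided by Theorem~\ref{t_basic_prop}(ii), respectively Lemma~\ref{l_cap_blister}, is uniform enough in the relevant parameter regime ($r$ very small in the first case, $r$ small and $x$ pinned on the circle in the second). A small additional subtlety is that the identity $\{\Phi_x(\alpha)>r\}=\{\B(x,r)\subset\V^\alpha\}$ uses closed balls and thus is only literally true up to the measure-zero event that a trajectory touches $\partial\B(x,r)$; this is harmless since the distribution of $\Phi_x(\alpha)$ is continuous, a point I will justify briefly using Theorem~\ref{t_Phi_process}(i) and the continuity of $r\mapsto\capa(\B(x,r)\cup\B(1))$.
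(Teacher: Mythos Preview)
Your proposal is correct and follows exactly the paper's own argument: reduce to $\IP[\Phi_x(\alpha)>r]=\exp(-\pi\alpha\capa(\B(x,r)\cup\B(1)))$ via Proposition~\ref{p_indeed_BRI}(i), then insert the capacity asymptotics from Lemma~\ref{l_cap_distantdisk}(iii) (equivalently, the exponent in~\eqref{properties_BRI_ii'}) for $x\notin\B(1)$ and from Lemma~\ref{l_cap_blister} for $x\in\partial\B(1)$. Your explicit bookkeeping of the two error sources is more detailed than the paper's terse ``an application of Lemma~\ref{l_cap_distantdisk}(iii) concludes the proof'', but the strategy is identical.
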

In particular, \eqref{eq_freedisk} implies that
 $2\alpha \ln^2\|x\| /\ln (\Phi_x(\alpha))^{-1}$
converges in distribution to an Exponential random 
variable with rate~$1$, either with fixed~$x$ and as $\alpha \to \infty$,
or for a fixed~$\alpha$ and $x\to\infty$;
also, \eqref{eq_freedisk_boundary} implies that
$\alpha (\Phi_x(\alpha))^2$
converges in distribution to an Exponential random 
variable with rate~$1$, as $\alpha \to \infty$.
Informally, the above means that,
if~$\alpha>0$ and~$x\notin \B(1)$ are such that
$\alpha \ln^2\|x\|$ is large, 
% , for large~$x$ 
% and fixed~$\alpha$, 
% $\Phi^\alpha_x$ is approximately 
% $\exp\big(-2Y^{-1}\alpha \ln^2\|x\|\big)$,
% where~$Y$ is an Exponential(1) random variable.
% For fixed~$x$ and large~$\alpha$, 
then $\Phi_x(\alpha)$ is approximately
$\exp(-2\alpha Y^{-1} %g_x \ln\|x\|)$, 
 \ln^2\|x\|)$, 
where~$Y$ is an Exponential(1) random variable.
%, with~$g_x$ as in~\eqref{df_g_x}.
In the case $x\in\partial\B(1)$, $\Phi_x(\alpha)$ is 
approximately~$\sqrt{\frac{Y}{\alpha}}$
as $\alpha\to\infty$.

\section{Some auxiliary facts}
\label{s_aux}

In many computations we meet the mean logarithmic distance of $x \in \R^2$ to the points of the unit circle,
\begin{equation}
\label{df_g_x2}
 g_{x} := \frac{1}{2\pi} \int_0^\pi 
 \ln\big(\|x\|^2\!+\!1\!-\!2\|x\|\cos\theta\big) \, d\theta = \int_{\partial \B(1)} \ln \|x\!-\!z\|\; d \hm_{\B(1)}(z);
\end{equation}
that is, $g_x$ is equal to the logarithmic potential 
generated at $x$ by the harmonic measure on the disk. Compare with \eqref{df_ell_y}.
This integral can be computed:
\begin{prop} 
\label{p_prop_gx2} 
We have
\begin{equation}
g_x = 
\begin{cases}
 0,  &  \text{for } x \in \B(1),  \\
\ln \|x\|,  & \text{for }  x \notin \B(1).
\end{cases}
\end{equation}
\end{prop}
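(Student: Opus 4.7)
\medskip

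The plan is to recognize $g_x$ as the logarithmic potential at $x$ of the uniform probability measure on the unit circle $\partial\B(1)$, and to evaluate it in the two regimes by exploiting the mean value property of harmonic functions. Using rotational invariance to place $x$ on the positive real axis and doubling the half-period integral, the stated identity reads
\[
g_x \;=\; \frac{1}{2\pi}\int_0^{2\pi} \ln\|x-e^{i\theta}\|\, d\theta,
\]
so that $g_x$ is the average of $z\mapsto \ln\|x-z\|$ over $\partial\B(1)$ equipped with $\hm_{\B(1)}$ (which is normalized arc length, cf.\ the Poisson kernel at $\infty$).

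For $\|x\|>1$, the function $z\mapsto \ln\|x-z\|$ is harmonic on an open set containing $\B(1)$, since its only singularity $z=x$ lies outside. The mean value property applied at the center $z=0$ then gives
\[
g_x \;=\; \ln\|x-0\| \;=\; \ln\|x\|.
\]

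For $\|x\|<1$, the direct mean value property fails because of the singularity inside the disk. The standard trick is to replace $\ln\|x-z\|$ by its ``Kelvin dual'' $\ln\|1-\bar x z\|$. On $\partial\B(1)$ one has $\|1-\bar x z\|=\|z\|\cdot\|\bar z-\bar x\|=\|z-x\|$, so the two functions coincide on the integration contour; meanwhile, $z\mapsto \ln\|1-\bar x z\|$ is harmonic on the larger disk $\B(1/\|x\|)\supset \B(1)$, because its only singularity $z=1/\bar x$ has modulus $1/\|x\|>1$. Applying the mean value property at $z=0$ yields $g_x=\ln 1=0$.

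The boundary case $\|x\|=1$ follows by continuity of $g_x$ in $x$ (which is immediate from dominated convergence, the integrand being locally integrable uniformly in $x$) from either regime, but one can also verify it directly via the classical identity $\int_0^\pi \ln\bigl(2\sin(\theta/2)\bigr)\,d\theta=0$. No step here is a real obstacle; the only thing to get right is the conformal/Kelvin substitution for the interior case, which is the standard way to handle the singularity of the logarithmic potential inside its support.
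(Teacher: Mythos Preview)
Your proof is correct, but the route is genuinely different from the paper's. The paper gives a purely computational argument: setting $a=\|x\|$ and $I(a)=\int_0^\pi\ln(a^2+1-2a\cos\theta)\,d\theta$, it reduces the claim to $I(a)=0$ for $a\in(0,1]$, then derives the functional equation $I(a)=\tfrac12 I(a^2)$ via trigonometric manipulations (the substitution $\theta\mapsto\pi-\theta$ and the double-angle formula), which gives $I(1)=0$ immediately and $I(a)=0$ for $a<1$ by iteration and continuity at $0$. Your argument instead exploits harmonicity and the mean value property, with the Kelvin-type substitution $\ln\|x-z\|=\ln\|1-\bar x z\|$ on $\partial\B(1)$ to relocate the singularity in the interior case. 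Your approach is shorter and more conceptual (and is in the spirit of the remark the authors make about a distributional-Laplacian proof), while the paper's approach is entirely self-contained real analysis requiring no potential theory. Both are standard; either would be acceptable here.
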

For completeness, we give a short elementary proof in the Appendix.
  The reader is referred to the Frostman's 
theorem~\cite[Theorem~3.3.4]{Ransford95} 
for how the result relates to general potential theory. 
Moreover, we mention that another proof is possible,
observing that both sides are solutions of
 $\Delta u= 2 \pi \hm_{\B(1)}$ on~$\R^2$ in the 
 distributional sense.

\subsection{On hitting and exit probabilities for~$W$ and~$\widehat W$}
\label{s_aux_hitting}
First, we recall a couple of basic facts for the exit
probabilities of the two-dimensional Brownian motion.
The following is a slight sharpening of~\eqref{hitting_BM}:
\begin{lem}
\label{l_exit_disks}
For all $x, y \in \R^2$ and $R>r>0$ with 
$x \in \B(y,R)\setminus \B(r)$, $\|y\|\leq R-2$, we have
\begin{equation}
\label{nothit_r}
 P_x\big[\tau(r)>\tau(y,R)\big] 
   = \frac{\ln(\|x\|/r)}{\ln(R/r) 
   + O\big(\frac{\|y\|\vee 1}{R}\big)}\;, 
\end{equation}
as $R\to \infty$. 
\end{lem}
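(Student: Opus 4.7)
The plan is to apply the optional stopping theorem to $h(z)=\ln\|z\|$, harmonic on $\R^2\setminus\{0\}$, at the stopping time $T=\tau(r)\wedge\tau(y,R)$. Since the Brownian path stays in the bounded set $\B(y,R)\setminus\overline{\B(r)}$ throughout $[0,T]$, where $h$ is bounded, optional stopping yields $\ln\|x\|=\E_x[\ln\|W_T\|]=(1-p)\ln r+p\mu$, with $p=\IP_x[\tau(y,R)<\tau(r)]$ and $\mu=\E_x[\ln\|W_T\|\mid\tau(y,R)<\tau(r)]$. Rearranging gives
\[
p=\frac{\ln(\|x\|/r)}{\mu-\ln r},
\]
so the lemma reduces to the claim $\mu=\ln R+O((\|y\|\vee 1)/R)$.

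The upper bound $\mu\leq \ln R+\|y\|/R$ is immediate from Jensen's inequality: on the conditioning event, $W_T\in\partial\B(y,R)$, so $\|W_T\|\leq R+\|y\|$, whence $\mu\leq \ln\E_x[\|W_T\|\mid\cdot]\leq\ln(R+\|y\|)\leq\ln R+\|y\|/R$. For the complementary lower bound, my first step would be a sandwich with the concentric ball $\B(R-\|y\|)\subset\B(y,R)$: when $\|x\|\leq R-\|y\|$, every trajectory exiting $\B(y,R)$ from $x$ must first cross $\partial\B(R-\|y\|)$, so the concentric formula~\eqref{hitting_BM} gives $p\leq\ln(\|x\|/r)/\ln((R-\|y\|)/r)$, hence $\mu\geq\ln R+\ln(1-\|y\|/R)$. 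This is of the desired form whenever $\|y\|/R$ is bounded away from $1$; the case $\|x\|>R-\|y\|$ can be absorbed by applying the strong Markov property at the first hitting time of $\partial\B(R-\|y\|)$.

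The \emph{main obstacle} is handling the regime in which $\|y\|$ is comparable to $R$, where the sandwich bound on $\ln(1-\|y\|/R)$ deteriorates. The key structural fact I would exploit is Proposition~\ref{p_prop_gx2}: the arc-length average $\frac{1}{2\pi}\int_0^{2\pi}\ln\|y+Re^{i\theta}\|\,d\theta$ equals $\ln R$ for every $\|y\|<R$. Thus any deviation of $\mu$ from $\ln R$ must come from the conditional harmonic measure of $W_T$ on $\partial\B(y,R)$ being biased relative to uniform arc length. Writing this measure as the Poisson kernel of $\B(y,R)$ restricted to paths that have avoided $\B(r)$, I would (i)~control the ratio of the Poisson kernel to uniform arc length via a Harnack-type estimate, giving a multiplicative error $1+O(\|y\|/R)$ on the bulk of $\partial\B(y,R)$, and (ii)~exploit that the killing on $\B(r)$ preferentially removes weight from the near-origin portion of $\partial\B(y,R)$ (exactly where $\ln\|z\|$ is smallest), which only strengthens the lower bound. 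Combining the mean-value identity with these distributional estimates should yield $\mu\geq \ln R-O((\|y\|\vee 1)/R)$, completing the proof.
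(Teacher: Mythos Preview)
Your approach is exactly the paper's: apply optional stopping to the local martingale $\ln\|W_t\|$ at $T=\tau(r)\wedge\tau(y,R)$, reducing the claim to $\mu:=\E_x[\ln\|W_T\|\mid \tau(y,R)<\tau(r)]=\ln R+O((\|y\|\vee 1)/R)$. The paper leaves this last step implicit; it follows from the pointwise bounds $R-\|y\|\le\|W_T\|\le R+\|y\|$ on $\partial\B(y,R)$, which give $|\mu-\ln R|\le|\ln(1-\|y\|/R)|=O(\|y\|/R)$ whenever $\|y\|/R$ is bounded away from~$1$. (Your sandwich with $\B(R-\|y\|)$ reproduces exactly this bound, so the case split on $\|x\|$ and the strong Markov step are unnecessary.) This is the only regime in which the lemma is ever invoked in the paper: in every application $y$ is held fixed while $R\to\infty$.

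Your worry about $\|y\|$ comparable to $R$ is well placed, but the remedy you sketch cannot work. First, the Harnack-type control you claim is mis-stated: the Poisson kernel of $\B(y,R)$ from an interior point~$x$ deviates from uniform by a factor governed by $\|x-y\|/R$, not $\|y\|/R$, and the hypotheses say nothing about $\|x-y\|$. More decisively, the lemma as literally written (with only $\|y\|\le R-2$) is \emph{false} in that regime, so no argument can rescue it. Take $y=(R-2,0)$, $x=(-1,0)$, $r=\tfrac{1}{10}$: the formula predicts $p=\ln 10/(\ln(10R)+O(1))\to 0$, yet $\B(y,R)\subset\{z:z_1\ge -2\}$ and $\B(r)\subset\{z:z_1>-\tfrac{1}{10}\}$, so a one-dimensional gambler's-ruin bound on the first coordinate of~$W$ gives
\[
p\;\ge\;\IP\big[\text{first coordinate hits }-2\text{ before }-\tfrac{1}{10}\,\big|\,\text{start at }-1\big]\;=\;\tfrac{9}{19}
\]
for every~$R$. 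The upshot is that the elementary pointwise bound already yields everything that is actually true; the hypothesis should be read as (or tightened to) $\|y\|\le cR$ for some fixed $c<1$, and your additional machinery is aimed at an unreachable target.
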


\begin{proof}  It is a direct consequence of the optional stopping theorem for the local martingale $\ln \|W_t\|$ 
and the stopping time $\tau(y,R)\wedge \tau(r)$.
\end{proof}

We need to obtain some formulas for hitting probabilities by Brownian motion
of arbitrary %(compact and regular) 
compact
sets, which are ``not far'' from the origin.
Let~$\mu_r$ be the uniform probability measure on~$\partial \B(r)$;
observe that, by symmetry, $\mu_r=\hm_{\B(r)}$.
Let $\nu_{A,x}$ the entrance
measure to~$A$ starting at $x\in \B(y,R)\setminus A$;
also, let $\nu_{A,x}^{y,R}$ be the conditional entrance
measure, given that $\tau(A)<\tau(y,R)$ 
(all that with respect to the standard 
two-dimensional Brownian motion). 
%%%%%%%%%%%
\begin{lem}
\label{l_entr_harmonic}
Assume also that $A\subset\B(r)$ for some $r>0$.
We have 
\begin{align}
 \Big|\frac{d \nu_{A,x}}{d\hm_A}-1\Big| &= O\Big(\frac{r}{s}\Big),
 \label{uRN_entrance}\\
\intertext{and}
 \Big|\frac{d \nu_{A,x}^{y,R}}{d\hm_A}-1\Big|
 &= O\Big(\frac{r}{s}\Big),
 \label{RN_entrance}
\end{align}
where  $s=\|x\|-r$.
\end{lem}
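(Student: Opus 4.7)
The plan is to use the strong Markov property at $\tau(r)$ (the first hit on $\partial \B(r)$) together with the explicit Poisson kernel for the exterior of a disk. The pivotal identity is
\[
\hm_A = \int_{\partial \B(r)} \nu_{A, y'}\, d\mu_r(y'),
\]
which holds because BM started ``from infinity'' first meets $\partial \B(r)$ with distribution $\mu_r = \hm_{\B(r)}$ (by rotational symmetry) and then continues to enter~$A$.

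For \eqref{uRN_entrance}: since $A \subset \B(r)$ and $\|x\| > r$, BM started at~$x$ must enter $\B(r)$ before hitting~$A$, so the strong Markov property gives
\[
\nu_{A,x}(B) = \int_{\partial \B(r)} \nu_{A, y'}(B)\, d\nu_{\B(r), x}(y')
\]
for any Borel $B \subset \partial A$. The Poisson kernel for the exterior of $\B(r)$ is explicit,
\[
\frac{d\nu_{\B(r), x}}{d\mu_r}(z) = \frac{\|x\|^2 - r^2}{\|x-z\|^2}, \qquad z \in \partial \B(r),
\]
and since $\|x\|^2 - r^2 = s(s+2r)$ while $\|x-z\| \in [s, s+2r]$, this ratio lies in $[s/(s+2r),\, (s+2r)/s]$, differing from $1$ by $O(r/s)$ uniformly in~$z$. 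Combining with the pivotal identity for $\hm_A$ yields \eqref{uRN_entrance}.

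For \eqref{RN_entrance}: apply the same decomposition at $\tau(r)$, now on the event $\{\tau(A) < \tau(y, R)\}$, which forces $\tau(r) < \tau(y, R)$ since $A \subset \B(r)$. Setting $g(y') := \IP_{y'}[\tau(A) < \tau(y, R)]$, $h_B(y') := \IP_{y'}[W_{\tau(A)} \in B,\ \tau(A) < \tau(y, R)]$ for $y' \in \partial\B(r)$, and $\tilde\mu_x(\cdot) := \IP_x[W_{\tau(r)} \in \cdot,\ \tau(r) < \tau(y, R)]$, one obtains
\[
\nu_{A,x}^{y, R}(B) = \frac{\int h_B\, d\tilde\mu_x}{\int g\, d\tilde\mu_x}.
\]
Lemma~\ref{l_exit_disks} gives $\IP_x[\tau(r) > \tau(y, R)] = O(1/\ln R)$, so the normalized $\tilde\mu_x$ agrees with $\nu_{\B(r), x}$ up to factor $1 + O(1/\ln R)$, hence with $\mu_r$ up to $1 + O(r/s) + O(1/\ln R)$ by the Poisson kernel estimate above. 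The analogous ratio starting from $\mu_r$ equals $\IP_{\mu_r}[W_{\tau(A)} \in B \mid \tau(A) < \tau(y, R)]$, which a short calculation (using $\IP_{\mu_r}[\tau(y, R) < \tau(A)] = O(1/\ln R)$, again by the exit estimate) shows to differ from $\hm_A(B)$ by $O(1/\ln R)$. Combining all error terms gives $d\nu_{A, x}^{y, R}/d\hm_A = 1 + O(r/s) + O(1/\ln R)$, which reduces to $O(r/s)$ in the asymptotic regime $R \to \infty$ inherited from Lemma~\ref{l_exit_disks}. The main obstacle is the bookkeeping for the conditional case; the cleanest route is to keep both $\nu_{A, x}^{y, R}$ and its $\mu_r$-analogue as ratios linear in the starting law, so that the multiplicative Poisson kernel correction propagates directly to the quotient while the additive conditioning error stays cleanly controlled by the explicit exit estimate.
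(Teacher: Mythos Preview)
Your argument for \eqref{uRN_entrance} is correct and essentially identical to the paper's (the paper first rescales to $r=1$, but the content---the explicit Poisson-kernel bound combined with strong Markov at $\tau(r)$ and the identity $\hm_A=\int_{\partial\B(r)}\nu_{A,y'}\,d\mu_r(y')$---is the same).

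For \eqref{RN_entrance} the paper only cites a calculation in \cite{CPV16}, so you are attempting more detail, but there is a genuine gap. The step ``the normalized $\tilde\mu_x$ agrees with $\nu_{\B(r),x}$ up to factor $1+O(1/\ln R)$'' is not justified: what you know is that the \emph{total mass} of the difference $\nu_{\B(r),x}-\tilde\mu_x$ equals $p:=\IP_x[\tau(r)>\tau(y,R)]$, and a small total mass says nothing about the pointwise Radon--Nikodym density (the mass could concentrate on a small arc of $\partial\B(r)$). Separately, $p$ is not $O(1/\ln R)$ uniformly in~$x$: Lemma~\ref{l_exit_disks} gives $p\asymp \ln(\|x\|/r)/\ln(R/r)$, which is of order~$1$ when $\|x\|$ is a power of~$R$. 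And the final appeal to ``the asymptotic regime $R\to\infty$'' is out of place, since the bound is claimed for fixed~$y,R$. The clean fix stays entirely within your framework: by strong Markov at $\tau(y,R)$ one has
\[
\nu_{\B(r),x}-\tilde\mu_x \;=\; \IE_x\big[\1{\tau(y,R)<\tau(r)}\,\nu_{\B(r),\,W_{\tau(y,R)}}\big],
\]
and applying the \emph{same} Poisson-kernel estimate to each $\nu_{\B(r),z}$ with $z\in\partial\B(y,R)$ (so $\|z\|-r\ge R-\|y\|-r$) gives $d(\nu_{\B(r),x}-\tilde\mu_x)/d\mu_r = p\,(1+O(r/(R-\|y\|-r)))$ pointwise. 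Subtracting from $d\nu_{\B(r),x}/d\mu_r=1+O(r/s)$ and dividing by $1-p$, the leading $p$ cancels and one gets $d\tilde\nu_x/d\mu_r=1+O(r/s)$ under the mild geometric assumption that $\partial\B(y,R)$ is not closer to $\B(r)$ than $x$ is (as in all the paper's applications). The same trick handles the comparison of $\int h_B\,d\mu_r\big/\int g\,d\mu_r$ with $\hm_A$, after which your ratio formula yields \eqref{RN_entrance} directly, with no passage to $R\to\infty$.
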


\begin{proof}
 Observe that we can assume that $r=1$, the general case 
 then follows from a rescaling argument. Next, it is well known
(see e.g.\ Theorem~3.44 of~\cite{MP10})
 that for $x\notin \B(1)$ and~$y\in \partial\B(1)$ 
\begin{equation}
\label{Poisson_kernel}
 H(x,y) = \frac{\|x\|^2-1}{2\pi\|x-y\|^2}
\end{equation}
is the Poisson kernel on $\R^2\setminus \B(1)$
(i.e., the entrance measure to~$\B(1)$ starting from~$x$).
A straightforward calculation implies that 
\begin{equation}
\label{estim_kernel}
 \Big|H(x,y) - \frac{1}{2\pi}\Big| = O\Big(\frac{1}{\|x\|-1}\Big)
\end{equation}
uniformly in $y\in \partial\B(1)$.
Recall that~$\mu$ denotes the uniform measure on~$\partial\B(1)$
and $\mu=\hm_{\B(1)}$ by symmetry. 
Therefore, it holds that 
\[
 \hm_A(u) = \int_{\partial\B(1)}\nu_{A,y}(u)\, d\mu(y);
\]
also,
\[
\nu_{A,x}(u) = \int_{\partial\B(1)}\nu_{A,y}(u)\, d\nu_{\B(1),x}(y)
 = \int_{\partial\B(1)}\nu_{A,y}(u)
   \frac{d\nu_{\B(1),x}}{d\mu}(y)\, d\mu(y).
\]
Now, \eqref{estim_kernel} implies that 
$\big|\frac{d\nu_{\B(1),x}}{d\mu}-1\big|=O\big(\frac{1}{\|x\|-1}\big)$,
which shows~\eqref{uRN_entrance} for~$r=1$ and so (as observed before)
for all~$r>0$. The corresponding fact~\eqref{RN_entrance}
for the conditional entrance measure then follows 
in a standard way, see e.g.\ the calculation~(31)
in~\cite{CPV16}.
\end{proof}

We also need an estimate on the (relative) difference
of entrance measures to~$\B(1)$ from two \emph{close}
points $x_1,x_2\notin \B(1)$.
Using~\eqref{Poisson_kernel}, it is 
elementary to obtain that
\begin{equation}
\label{H_RN}
 \Big|\frac{dH(x_1,\cdot)}{dH(x_2,\cdot)} - 1\Big|
  = O\Big(\frac{\|x_1-x_2\|}{\dist\big(\{x_1,x_2\},\B(1)\big)}\Big). 
\end{equation}

Next, recall the definition~\eqref{df_ell_y} of the quantity~$\ell_x$.
Clearly, it is straightforward to obtain that
\begin{equation}
\label{ell_x_infty}
  \ell_x = \big(1+O(\|x\|^{-1})\big) \ln \|x\|,  \qquad
\text{ as }\|x\|\to\infty .
\end{equation}
We also need to know the asymptotic behaviour 
of~$\ell_x$ as $\|x\|\to 1$. 
Write
\[
 \ell_x = \ln (\|x\|-1) + 
 \frac{1}{2\pi}\int_{\partial \B(1)}
   \frac{\|x\|^2-1}{\|x-z\|^2}
\ln\frac{\|x-z\|}{\|x\|-1}\, dz;
\]
it is then elementary to obtain that there exists~$C>0$
such that
\[
0\leq \frac{\|x\|^2-1}{\|x-z\|^2}
\ln\frac{\|x-z\|}{\|x\|-1} \leq C
\]
for all $z\in\partial \B(1)$.
This means that
\begin{equation}
\label{ell_x_1}
  \ell_x = \ln (\|x\|-1)  + O(1) \qquad
\text{ as }\|x\|\downarrow 1.
\end{equation}

Now, we need an expression for the probability
that the diffusions~$W$ and~$\hW$ visit a set before 
going out of a (large) disk.
\begin{lem}
\label{l_cond_avoid_A}
 Assume that $A\subset \R^2$ is such that 
$\B(1)\subset A \subset \B(r)$.
Let $y,R$ be such that $\B(2r)\subset \B(y,R)$. Then,
for all $x \in \B(y,R)\setminus \B(2r)$ we have
\begin{align}
\IP_x\big[\tau(y,R)<\tau(A)\big] & = 
\frac{\ln \|x\|-\frac{\pi}{2}\capa(A)+O\big(\frac{\|y\|\vee 1}{R}
+\frac{r\ln r }{\|x\|}\big)}
{\ln R-\frac{\pi}{2}\capa(A) 
+ O\big(\frac{\|y\|\vee 1}{R}+\frac{r\ln r }{\|x\|}\big)},
\label{uncond_avoid_A}
\intertext{and}
 \IP_x\big[\htau(y,R)<\htau(A)\big] & = 
\frac{\ln \|x\|-\frac{\pi}{2}\capa(A)+O\big(\frac{\|y\|\vee 1}{R}
+\frac{r\ln r }{\|x\|}\big)}
{\ln R-\frac{\pi}{2}\capa(A) 
+ O\big(\frac{\|y\|\vee 1}{R}+\frac{r\ln r }{\|x\|}\big)}
\nonumber\\ 
 &\qquad\qquad \times\frac{\ln R + 
O\big(\frac{\|y\|\vee 1}{R}\big)}{\ln \|x\|}.
 \label{cond_avoid_A}
\end{align}
\end{lem}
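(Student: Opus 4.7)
The plan for \eqref{uncond_avoid_A} is to apply the optional stopping theorem to the martingale $h(W_t)=\ln\|W_t\|$ at the bounded stopping time $T = \tau(A)\wedge\tau(y,R)$; since $\B(1)\subset A$ and $\|x\|\geq 2r$, the process $W$ stays away from $0$ up to time $T$, so $\ln\|W_t\|$ is a genuine martingale there. Writing $p=\IP_x[\tau(y,R)<\tau(A)]$, optional stopping yields
\begin{align*}
\ln\|x\| &= p\,\E_x\!\big[\ln\|W_{\tau(y,R)}\| \mid \tau(y,R)<\tau(A)\big] \\
&\quad + (1-p)\,\E_x\!\big[\ln\|W_{\tau(A)}\| \mid \tau(A)<\tau(y,R)\big].
\end{align*}
On $\{\tau(y,R)<\tau(A)\}$, $\|W_T\|\in[R-\|y\|,R+\|y\|]$, so the first conditional expectation equals $\ln R + O((\|y\|\vee 1)/R)$. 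For the second, Lemma~\ref{l_entr_harmonic} (specifically~\eqref{RN_entrance}) says that the conditional entrance measure $\nu_{A,x}^{y,R}$ on $\partial A$ has Radon--Nikodym derivative $1+O(r/s)$ with respect to $\hm_A$, where $s=\|x\|-r\geq \|x\|/2$ because $\|x\|\geq 2r$, giving $r/s=O(r/\|x\|)$. Combining this with the pointwise bound $0\leq \ln\|z\|\leq \ln r$ on $\partial A$ and the identity $\int_{\partial A}\ln\|z\|\,d\hm_A(z)=\tfrac{\pi}{2}\capa(A)$ from~\eqref{df_Br_cap}, one gets
\[
\E_x\!\big[\ln\|W_{\tau(A)}\|\mid \tau(A)<\tau(y,R)\big] = \tfrac{\pi}{2}\capa(A) + O\!\Big(\tfrac{r\ln r}{\|x\|}\Big).
\]
Inserting these estimates into the optional-stopping identity and solving for $p$ yields~\eqref{uncond_avoid_A}.

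For~\eqref{cond_avoid_A}, the idea is to reduce to the formula just proved via Doob's $h$-transform~\eqref{def:hat_W}. Because $\B(1)\subset A$, on the event $\{\tau(y,R)<\tau(A)\}$ one has $\tau(y,R)<\tau(A)\leq \tau(1)$, so the indicator $\1{T<\tau(1)}$ appearing in~\eqref{def:hat_W} is automatically~$1$ there. Applying the $h$-transform identity at the bounded stopping time $T=\tau(y,R)\wedge\tau(A)$ together with the $\mathcal{F}_T$-measurable event $\{\tau(y,R)<\tau(A)\}$ gives
\[
\IP_x\!\big[\htau(y,R)<\htau(A)\big] = \E_x\!\Big[\1{\tau(y,R)<\tau(A)}\,\tfrac{\ln\|W_{\tau(y,R)}\|}{\ln\|x\|}\Big] = \frac{\ln R + O((\|y\|\vee 1)/R)}{\ln\|x\|}\,\IP_x\!\big[\tau(y,R)<\tau(A)\big],
\]
again using $\ln\|W_{\tau(y,R)}\|=\ln R+O((\|y\|\vee 1)/R)$ on the exit event. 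Substituting~\eqref{uncond_avoid_A} for $\IP_x[\tau(y,R)<\tau(A)]$ then produces~\eqref{cond_avoid_A}.

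The main technical point is to get the error term in~\eqref{uncond_avoid_A} in the claimed form $O(r\ln r/\|x\|)$, which requires combining the pointwise Radon--Nikodym estimate of Lemma~\ref{l_entr_harmonic} with the sup bound $|\ln\|z\||\leq \ln r$ on $\partial A$ in the correct order, and tracking the constraint $\|x\|\geq 2r$ to convert $r/s$ into $r/\|x\|$. The remaining ingredients---optional stopping for $\ln\|W\|$, the passage from $W$ to $\hW$ via the $h$-transform, and elementary bookkeeping of the errors on $\partial\B(y,R)$---are routine once this setup is in place.
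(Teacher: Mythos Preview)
Your proof is correct. The approach for~\eqref{cond_avoid_A} is essentially the same as the paper's (the paper writes $\IP_x[\htau(y,R)<\htau(A)]=(1-p_A)/(1-p_1)$ via Lemma~\ref{l_conn_W_hW}, which is exactly your $h$-transform computation once one notes $1-p_1=\ln\|x\|/(\ln R+O((\|y\|\vee 1)/R))$).

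For~\eqref{uncond_avoid_A} the route is genuinely different. The paper introduces $p_1=\IP_x[\tau(1)<\tau(y,R)]$ and $p_A=\IP_x[\tau(A)<\tau(y,R)]$, decomposes $p_A=p_1+\IP_x[\tau(A)<\tau(y,R)<\tau(1)]$, applies the strong Markov property at~$\tau(A)$ together with Lemma~\ref{l_entr_harmonic}, obtains a linear self-consistency equation $p_A=p_1+\frac{\pi}{2}(1+O(r/\|x\|))\,\capa(A)\,p_A/(\ln R+O(\cdot))$, and solves for~$p_A$. Your argument bypasses this: optional stopping for $\ln\|W\|$ at $T=\tau(A)\wedge\tau(y,R)$ gives the answer in one line, with Lemma~\ref{l_entr_harmonic} invoked only to evaluate $\E_x[\ln\|W_{\tau(A)}\|\mid\tau(A)<\tau(y,R)]=\tfrac{\pi}{2}\capa(A)+O(r\ln r/\|x\|)$. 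This is cleaner and avoids the auxiliary quantity~$p_1$ altogether; the paper's decomposition, on the other hand, makes the provenance of the two error terms (one from the outer boundary, one from the approximation $\nu_{A,x}^{y,R}\approx\hm_A$) slightly more transparent in the algebra. One small nit: $T$ is a.s.\ finite rather than bounded, but $\ln\|W_{t\wedge T}\|\in[0,\ln(R+\|y\|)]$ so optional stopping applies by dominated convergence.
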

Note that \eqref{uncond_avoid_A} deals with more general sets than \eqref{nothit_r} --
for which $A=\B(r)$ and $ \capa(A)=(2/\pi)\ln r$ -- but has different error terms.
\begin{proof}
For $x \in \B(y,R)\setminus \B(2r)$, abbreviate
(cf.\ Lemma~\ref{l_exit_disks})
\begin{equation}
\label{p_1_explicit}
p_1 = \IP_x[\tau(1)<\tau(y,R)] 
   = 1 - \frac{\ln \|x\|}{\ln R + O\big(\frac{\|y\|\vee 1}{R}\big)},
\end{equation}
and
\[
 p_A = \IP_x[\tau(A)<\tau(y,R)].
\]
Using~Lemma~\ref{l_exit_disks} and~\eqref{RN_entrance} again, 
using also that $\B(1) \subset A$, we write 
% (also abbreviating $s=\|x\|-r$)
\begin{align*}
 p_A &= p_1 + \IP_x[\tau(A)<\tau(y,R)<\tau(1)]\\
&= p_1 + p_A \int_{A} \IP_v[\tau(y,R)<\tau(1)] \, d\nu_{A,x}^{y,R}(v)\\
 &= p_1 + p_A \int_{A}
 \frac{\ln\|v\|}{\ln R + O\big(\frac{\|y\|\vee 1}{R}\big)}\, 
d\nu_{A,x}^{y,R}(v)\\
 &= p_1 + \Big(1+O\Big(\frac{r}{\|x\|}\Big)\Big)
   p_A \int_{A} \frac{\ln\|v\|}{\ln R + 
O\big(\frac{\|y\|\vee 1}{R}\big)}\, d\hm_A(v)\\
  &= p_1 + \frac{\pi}{2} \Big(1+O\Big(\frac{r}{\|x\|}\Big)\Big)
     \frac{p_A}{\ln R + 
O\big(\frac{\|y\|\vee 1}{R}\big)} \capa(A),
\end{align*}
which implies that
\begin{equation}
\label{expr_p_A}
 p_A = \Big(1 - \frac{\ln \|x\|}{\ln R + 
O\big(\frac{\|y\|\vee 1}{R}\big)}\Big)
    \Bigg(1-\frac{\pi}{2}\Big(1+O\Big(\frac{r}{\|x\|}\Big)\Big)
     \frac{\capa(A)}{\ln R + 
O\big(\frac{\|y\|\vee 1}{R}\big)} \Bigg)^{-1}.
\end{equation}
Since $\IP_x[\tau(y,R)<\tau(A)]=1-p_A$,
we obtain~\eqref{uncond_avoid_A}
after some elementary calculations. 

Next, using Lemma~\ref{l_conn_W_hW}, we obtain
\begin{align*}
 \IP_x\big[\htau(y,R)<\htau(A)\big] & = \IP_x\big[\tau(y,R)<\tau(A)
   \mid \tau(y,R)<\tau(1)\big] \\
 &= \frac{1-p_A}{1-p_1}
% \\
%&= \frac{\ln R + 
%O\big(\frac{\|y\|\vee 1}{R}\big)}{\ln\|x\|}
% \Bigg[1-\Big(1 - \frac{\ln \|x\|}{\ln R 
%+ O\big(\frac{\|y\|\vee 1}{R}\big)}\Big)\\
%  &\qquad \times
% \Bigg(1-\frac{\pi}{2}\Big(1+O\Big(\frac{r}{\|x\|}\Big)\Big)
%     \frac{\capa(A)}{\ln R + 
%O\big(\frac{\|y\|\vee 1}{R}\big)} \Bigg)^{-1}\Bigg].
\end{align*}
where both terms can be estimated by \eqref{p_1_explicit} and     \eqref{expr_p_A}.
% and so, using \eqref{p_1_explicit} and~\eqref{expr_p_A},
Again, after some elementary calculations,
we obtain~\eqref{cond_avoid_A}.
\end{proof}

Setting $y=0$ and sending~$R$ to infinity in~\eqref{cond_avoid_A},
we derive the following fact:
\begin{cor}
\label{c_cond_avoid_A_forever}
 Assume that $A\subset \R^2$ is such that 
$\B(1)\subset A \subset \B(r)$.
 Then for all 
%$x \in \B(y,R)\setminus \B(2r)$ 
$x \notin \B(2r)$ 
it holds that 
 \begin{equation}
\label{avoid_A_forever}
 \IP_x[\htau(A)=\infty] =  1-
 \frac{\pi\capa(A)}{2\ln\|x\|}\Big(1 
 +O\Big(\frac{r}{\|x\|}\Big)\Big).
\end{equation}
\end{cor}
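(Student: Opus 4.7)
The plan is to deduce the corollary directly from Lemma~\ref{l_cond_avoid_A} by specializing $y=0$ and letting $R\to\infty$ in \eqref{cond_avoid_A}. The key preliminary step is to justify that
\[
\IP_x[\htau(A)=\infty] \;=\; \lim_{R\to\infty}\IP_x[\htau(0,R)<\htau(A)].
\]
For this I would observe that, by~\eqref{escape_condBM}, the diffusion~$\hW$ is transient (indeed $\|\hW_t\|\to\infty$ a.s.), so $\htau(0,R)<\infty$ almost surely for every $R$. Consequently the events $\{\htau(0,R)<\htau(A)\}$ are monotone in $R$ (for $R$ large enough) and their intersection is exactly $\{\htau(A)=\infty\}$, so the limit identity holds.

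Next I would substitute $y=0$ in \eqref{cond_avoid_A}, so that $\|y\|\vee 1=1$ and the error $O((\|y\|\vee 1)/R)=O(1/R)$ vanishes in the limit. The second factor on the right-hand side, $\frac{\ln R + O(1/R)}{\ln\|x\|}$, is unbounded, but it is compensated by the first factor, whose denominator $\ln R-\frac{\pi}{2}\capa(A)+O(r\ln r/\|x\|)$ also grows like $\ln R$. More precisely, the factor
\[
\frac{\ln R + O(1/R)}{\ln R-\tfrac{\pi}{2}\capa(A)+O(r\ln r/\|x\|)}
\]
tends to $1$ as $R\to\infty$, so in the limit we obtain
\[
\IP_x[\htau(A)=\infty] \;=\; \frac{\ln\|x\|-\tfrac{\pi}{2}\capa(A)+O(r\ln r/\|x\|)}{\ln\|x\|}.
\]

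The final step is a rearrangement into the form stated in the corollary. Writing the last expression as $1-\frac{\pi\capa(A)}{2\ln\|x\|}+O\!\left(\frac{r\ln r}{\|x\|\ln\|x\|}\right)$, I would absorb the error into the leading correction by tracking the provenance of the $O(r\ln r/\|x\|)$ term more carefully: it arises as $\capa(A)\cdot O(r/\|x\|)$ in the derivation of Lemma~\ref{l_cond_avoid_A} (from the Radon--Nikodym estimate \eqref{RN_entrance} multiplied by $\capa(A)$), and is only written in the weaker form $O(r\ln r/\|x\|)$ via $\capa(A)\le(2/\pi)\ln r$. Using the sharper form, the error becomes $\frac{\pi\capa(A)}{2\ln\|x\|}\cdot O(r/\|x\|)$, giving exactly~\eqref{avoid_A_forever}.

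The main potential obstacle is precisely this bookkeeping of the error term: one must ensure the correction factor is proportional to $\capa(A)$ rather than $\ln r$, which is important for the statement to be sharp when $\capa(A)$ is small (e.g.\ when $A$ is close to $\B(1)$). Apart from this, the argument is a straightforward passage to the limit.
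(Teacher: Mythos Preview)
Your proposal is correct and follows exactly the paper's approach: the paper's entire proof is the one-line remark ``Setting $y=0$ and sending $R$ to infinity in~\eqref{cond_avoid_A}, we derive the following fact.'' You are in fact more careful than the paper, since you explicitly justify the limit via transience and you correctly identify that the $O(r\ln r/\|x\|)$ error in the stated form of~\eqref{cond_avoid_A} must be traced back to its origin as $\capa(A)\cdot O(r/\|x\|)$ in the proof of Lemma~\ref{l_cond_avoid_A} to obtain the multiplicative form~\eqref{avoid_A_forever}; the paper glosses over this point.
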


It is interesting to observe that~\eqref{expr_p_A} implies
that the capacity is translationary invariant (which is 
not very evident directly from~\eqref{df_Br_cap}), that is,
if $A\subset\R^2$ and $y\in \R^2$ are such that 
$\B(1) \subset A\cap (y+A)$, then $\capa(A)=\capa(y+A)$.
Indeed, it clearly holds that
$\IP_x[\tau(R)<\tau(A)]=\IP_{x+y}[\tau(y,R)<\tau(y+A)]$ for any $x,R$;
on the other hand,
if we assume that $\capa(A)\neq \capa(y+A)$, then
the expressions~\eqref{expr_p_A} for the two probabilities
will have different asymptotic behaviors as
$R\to\infty$ for, say, $x=R^{1/2}$, thus 
leading to a contradiction. 

Next, we relate the probabilities of certain events
for the processes~$W$ and~$\hW$. 
 In the next
result\footnote{which is analogous to Lemma~3.3~(ii) from~\cite{CPV16}}
we show that the excursions of~$W$ and~$\hW$ on a 
``distant'' (from the origin) set are almost indistinguishable:
\begin{lem}
\label{l_relation_W_hatW}
 Assume that~$M$ is compact
and suppose that $\B(1)\cap M=\emptyset$, denote $s=\dist(0,M)$,
$r=\diam(M)$, and assume that $r<s$. Then, for any $x \in M$,
\begin{equation}
\label{eq_relation_S_hatS2}
\Big\|\frac{d \IP_x\big[\hW_{\vert [0,\htau(\partial M)]} \in \cdot\,\big]}{d
\IP_x\big[W_{\vert [0,\tau(\partial M)]}\in \cdot\,\big]} -1
\Big\|_\infty = O\Big(\frac{r}{s\ln s}\Big).
\end{equation}
\end{lem}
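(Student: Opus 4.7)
The plan is to identify the Radon-Nikodym derivative of the law of $\hW_{\vert [0,\htau(\partial M)]}$ with respect to the law of $W_{\vert [0,\tau(\partial M)]}$ explicitly via the $h$-transform relation \eqref{def:hat_W}, and then show that this density is uniformly close to $1$ because the norm of every point of $M$ lies in the narrow range $[s, s+r]$.

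First, I would note that, since $M$ is compact with $\B(1) \cap M = \emptyset$ and $x \in M$, the Brownian motion $W$ starting at $x$ must cross $\partial M$ strictly before it can reach $\B(1)$; in particular $\tau(\partial M) < \tau(1)$ holds $\IP_x$-a.s. The relation \eqref{def:hat_W}, which identifies $\hW$ as the Doob $h$-transform of $W$ killed at $\tau(1)$ with $h(y)=\ln\|y\|$, then extends by optional stopping to the stopping time $\tau(\partial M)$: for every event $A$ measurable with respect to the stopped path one obtains
\[
\IP_x\!\left[\hW_{\vert [0, \htau(\partial M)]} \in A\right]
= \E_x\!\left[\mathbf{1}_{\{W_{\vert [0, \tau(\partial M)]} \in A\}}\, \frac{\ln \|W_{\tau(\partial M)}\|}{\ln \|x\|}\right],
\]
so the Radon-Nikodym density in question is exactly $\ln \|W_{\tau(\partial M)}\| / \ln\|x\|$.

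To conclude, I would bound this ratio uniformly along paths. Both $W_{\tau(\partial M)}$ and $x$ lie in $M$, hence their norms lie in $[s, s+r]$; combined with $r < s$ and $s > 1$, this yields $\bigl|\ln \|W_{\tau(\partial M)}\| - \ln \|x\|\bigr| \leq \ln(1 + r/s) \leq r/s$ and $\ln \|x\| \geq \ln s > 0$. Therefore
\[
\left|\frac{\ln \|W_{\tau(\partial M)}\|}{\ln \|x\|} - 1\right|
= \frac{\bigl|\ln \|W_{\tau(\partial M)}\| - \ln \|x\|\bigr|}{\ln \|x\|}
= O\!\left(\frac{r}{s \ln s}\right)
\]
uniformly along the path, which is the stated $L^\infty$ bound. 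There is no real obstacle here; the only point deserving a brief justification is the optional-stopping extension of \eqref{def:hat_W} from deterministic $t$ to the random time $\tau(\partial M)$, which is immediate because $\tau(\partial M) < \tau(1)$ a.s.\ and the martingale $(h(W_{t \wedge \tau(1)})/h(x))_t$ is uniformly bounded on $\{t \leq \tau(\partial M)\}$.
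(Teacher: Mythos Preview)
Your proof is correct and follows essentially the same approach as the paper: both derive that the Radon--Nikodym derivative equals $\ln\|W_{\tau(\partial M)}\|/\ln\|x\|$ and then bound this ratio using that both endpoints lie in~$M$, whose points have norm in $[s,s+r]$. The only cosmetic difference is that the paper obtains the density formula by passing through Lemma~\ref{l_conn_W_hW} (conditioning on $\{\tau(R)<\tau(1)\}$) and the Markov property at~$\tau(\partial M)$, whereas you get it directly from the $h$-transform definition~\eqref{def:hat_W} via optional stopping; both routes are equally short and yield the same expression~\eqref{eq:densityShanghai}.
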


\begin{proof}
First note that $\tau (\partial M) < \tau (1)$ and is finite $\IP_x$-a.s..

Let $A$ be a Borel set of paths starting at~$x$ and ending 
on the first hitting of~$\partial M$. 
Let~$R$ be such that $M \subset \B(R)$. 
Then, using Lemma~\ref{l_conn_W_hW}, Markov property  and~\eqref{hitting_BM},
one can write
\begin{align*}
\IP_x\big[\hW_{|[0,\htau(\partial M)]}\in A \big] &=
\IP_x\big[W_{|[0,\tau(\partial M)]}\in A \mid 
 \tau(R)<\tau(1)    \big] \\
&= \frac{ \int_{\partial M} \IP_x\big[W_{|[0,\tau(\partial M)]}
\in A, \tau(R)<\tau(1), W_{\tau(\partial M)} \in dy    \big] 
}{\IP_x\big[ \tau(R)<\tau(1)    \big]}\\
&=\frac{ \int_{\partial M} \IP_x\big[W_{|[0,\tau(\partial M)]}\in A, W_{\tau(\partial M)} \in dy \big] \IP_y\big[ \tau(R)<\tau(1) \big]}
{\IP_x\big[ \tau(R)<\tau(1)    \big]}\\
&=%\stackrel{\eqref{hitting_BM}}{=}
 \E_x \Big(\1{\{W_{|[0,\tau(\partial M)]}\in A\}}  
 \frac{\ln\|W_{\tau(\partial M)}\|}{\ln\|x\|} \Big).
\end{align*}
 Thus, we derive the following expression for the 
Radon-Nikodym derivative from~\eqref{eq_relation_S_hatS2},
% density, 
extending \eqref{def:hat_W}:
\begin{equation}\label{eq:densityShanghai}
\frac{d \IP_x\big[\hW_{|[0,\htau(\partial M)]} \in \cdot\,\big]}{d
\IP_x\big[W_{|[0,\tau(\partial M)]}\in \cdot\,\big]} =  \frac{\ln\|W_{\tau(\partial M)}\|}{\ln\|x\|} .
\end{equation}
The  desired result now follows
by writing $\ln\|y\| = \ln\|x\| + \ln\big(1
 +\frac{\|y\|-\|x\|}{\|x\|}\big)$ for $y\in\partial M$. 
\end{proof}

Let us state several other
general estimates, for the probability
of (not) hitting a given set which is, typically in (i) and (ii),
far away from the origin (so this is not related 
to Lemma~\ref{l_cond_avoid_A} 
and Corollary~\ref{c_cond_avoid_A_forever}, where $A$ was assumed to contain $\B(1)$):
\begin{lem}
\label{l_escape_from_disk}
Assume that $x\notin \B(y,r)$ and $\|y\|>r+1$
(so $\B(1)\cap \B(y,r)=\emptyset$). 
Abbreviate also $\Psi_0=\|y\|^{-1}r$, $\Psi_1=\|y\|^{-1}(r+1)$, 
$\Psi_2=\frac{r\ln r}{\|y\|}$,
$\Psi_3=r\ln r\big(\frac{1}{\|x-y\|}
+\frac{1}{\|y\|}\big)$, $\Psi_4=\frac{\|x-y\|\ln (\|y\|-1)}{\|y\|-1}$,
$\Psi_5=\frac{r(1+|\ln r^{-1}|+\ln\|y\|)}{\|y\|}$,
$\Psi_6 = \frac{r}{\|y\|-1}\ln \frac{\|y\|-1}{r}$,
and
$\Psi_7 = \frac{1}{\|x\|\ln \frac{\|y\|-1}{r}}
\big(|\ln r^{-1}|+\ln \|y\|(1+|\ln r^{-1}|+|\ln(\|y\|-1)|)\big)$.
\begin{itemize}
\item[(i)] It holds that 
\begin{equation}
\label{eq_escape_from_disk}
 \IP_x\big[\htau(y,r)<\infty\big]
= \frac{\big(\ln\|y\|+O(\Psi_0)\big)
\big(\ln\|x\|+\ln\|y\|-\ln\|x-y\|+O(\Psi_1)\big)}
{\ln\|x\|\big(2\ln\|y\|-\ln r+O(\Psi_1)\big)}.
\end{equation}
\item[(ii)] For any $r>1$ and any %nonempty 
set  $A$ 
such that $\B(y,1)\subset A\subset \B(y,r)$, we have
\begin{equation}
\label{eq_escape_from_anyset}
 \IP_x\big[\htau(A)<\infty\big]
= \frac{\big(\ln\|y\|+O(\Psi_0)\big)\big(\ln\|x\|+\ln\|y\|-\ln\|x-y\|
+O(\Psi_3)\big)}
{\ln\|x\|\big(2\ln\|y\|-\frac{\pi}{2}\capa({\mathcal T}_{-y}A)+ O(\Psi_2)
\big)},
\end{equation}
being $ {\mathcal T}_{y} $ the translation by vector $y$.
\item[(iii)] Next, we consider the regime  when~$x$
and~$y$ are fixed, $r<\|x-y\|$, and $\|x-y\|$ is small
 (consequently, $r$ is small as well).
 Then, we have
\begin{equation}
 \label{eq_escape_small_dsk_close}
\IP_x\big[\htau(y,r)<\infty\big]
 = \frac{\ln\|x-y\|^{-1}+\ell_y+\ln\|x\|
+O(\Psi_4)}{\ln r^{-1}+\ell_y+\ln\|y\|+O(\Psi_3)}
 \big(1+O(\|x-y\|)\big).
\end{equation}
\item[(iv)] The last regime we need to consider is 
when~$x$ is large, but~$y$ possibly not (it can be 
even very close to~$\B(1)$).
Then, we have
\begin{equation}
 \label{eq_escape_verycloseto1}
 \IP_x\big[\htau(y,r)<\infty\big]
 =  \frac{(\ln\|y\|+O(\Psi_0))(\ln\|y\|+O(\Psi_5 + \Psi_7))}
{\ln\|x\|(\ln r^{-1}+ \ln\|y\|
+\ell_y+O(\Psi_6))}.
\end{equation}
\end{itemize}
\end{lem}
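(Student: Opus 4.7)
\textbf{Plan for the proof of Lemma~\ref{l_escape_from_disk}.}

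The overarching strategy is to convert the hitting problem for $\hW$ into a hitting problem for the standard two-dimensional Brownian motion~$W$ via Lemma~\ref{l_conn_W_hW}, and then compute the latter using optional stopping with two harmonic functions: $h_0(w)=\ln\|w\|$ (harmonic off the origin) and $h_y(w)=\ln\|w-y\|$ (harmonic off the point~$y$). Concretely, since $\hW$ is transient and the event $\{\htau(y,r)<\infty\}$ is decreasing in $R$,
\[
\IP_x[\htau(y,r)<\infty]=\lim_{R\to\infty}\IP_x[\htau(y,r)<\htau(R)]
=\lim_{R\to\infty}\IP_x\big[\tau(y,r)<\tau(R)\,\big|\,\tau(R)<\tau(1)\big].
\]
The denominator of this conditional probability is given by~\eqref{hitting_BM} and Lemma~\ref{l_exit_disks}. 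For the numerator, I would decompose at the first hit of $\partial\B(y,r)$: writing $\nu_R$ for the (sub)law of $W_{\tau(y,r)}$ on that circle on the event $\tau(y,r)<\tau(R)\wedge\tau(1)$, strong Markov gives
\[
\IP_x[\tau(y,r)<\tau(R),\,\tau(R)<\tau(1)]=\int_{\partial\B(y,r)}\IP_z[\tau(R)<\tau(1)]\,\nu_R(dz);
\]
Lemma~\ref{l_exit_disks} together with $\ln\|z\|=\ln\|y\|+O(r/\|y\|)$ on $\partial\B(y,r)$ lets me pull the inner probability out with a $\Psi_0$-sized relative error, reducing everything to the computation of the mass $\nu_R(\partial\B(y,r))=\IP_x[\tau(y,r)<\tau(1)\wedge\tau(R)]$.

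To compute this mass I would apply optional stopping at $T=\tau(1)\wedge\tau(y,r)\wedge\tau(R)$ to both $h_0$ and $h_y$. The function $h_0$ vanishes on $\partial\B(1)$, equals $\ln R+O(1/R)$ on $\partial\B(R)$, and its conditional expectation on $\partial\B(y,r)$ is $\ln\|y\|+O(\Psi_0)$ by Proposition~\ref{p_prop_gx2} and the fact that the harmonic measure on $\partial\B(y,r)$ seen from~$x$ is approximately uniform (Lemma~\ref{l_entr_harmonic}, with relative error controlled by $r/\dist(x,\B(y,r))$). Symmetrically, $h_y$ equals $\ln\|x-y\|$ at the start, satisfies $h_y(W_{\tau(R)})=\ln R+O(1/R)$ (after a logarithmic expansion in $\|y\|/R$), has conditional average $\ln r$ on $\partial\B(y,r)$, and its average on $\partial\B(1)$ equals $\ln\|y\|$ by Proposition~\ref{p_prop_gx2}. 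These two instances of optional stopping yield a linear $2\times 2$ system in the three hitting probabilities (tied together by summing to $1$), which I solve by Cramer's rule. Combining with the expression from the first step and sending $R\to\infty$ produces \eqref{eq_escape_from_disk} with the stated error $\Psi_0,\Psi_1$.

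Part~(ii) follows the same pattern with $\B(y,r)$ replaced by $A$: the only change is that the conditional average of $h_y$ on $\partial A$ becomes $-\tfrac{\pi}{2}\capa(\mathcal{T}_{-y}A)$, by the very definition~\eqref{df_Br_cap} applied to the translate and Lemma~\ref{l_entr_harmonic} (which controls the discrepancy between harmonic measure from~$x$ and the harmonic measure from infinity, producing the $\Psi_2,\Psi_3$ errors). Parts~(iii) and~(iv) are not new computations but rather specializations of~(i) to regimes where $x-y$ is small (respectively, $x$ far but $y$ close to $\partial\B(1)$); in each case I would plug in the appropriate asymptotics for $\ln\|x-y\|$ and for $\ell_y$ from~\eqref{ell_x_infty}–\eqref{ell_x_1}, and replace the coarse error $\Psi_0+\Psi_1$ by a sharper bound on the harmonic-measure discrepancy using~\eqref{H_RN}, which is precisely what produces the refined $\Psi_4,\Psi_5,\Psi_6,\Psi_7$.

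The main obstacle is bookkeeping. The two-equation system gives a rational expression in $\ln\|x\|,\ln\|y\|,\ln\|x-y\|,\ln r,\ln R$, and the nontrivial step is to propagate the individual $O(\cdot)$ terms through the inversion of the $2\times 2$ system in such a way that they assemble into the specific quantities $\Psi_j$ rather than looser bounds. The regimes in (iii)–(iv) where $\ln r$ and $\ell_y$ are large and of opposite sign to $\ln\|y\|$ make cancellations delicate, and this is where the sharper Poisson-kernel estimate~\eqref{H_RN} becomes essential instead of the coarser uniform bound from Lemma~\ref{l_entr_harmonic}.
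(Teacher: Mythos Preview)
Your scheme for parts~(i)--(ii) is correct and is a legitimate alternative to the paper's argument. The paper does not stop both martingales at the single time $T=\tau(1)\wedge\tau(y,r)\wedge\tau(R)$; instead it introduces the two-set hitting probabilities $h_1,h_2$ and the transition probabilities $q_{12},q_{21}$ (each obtained from a separate optional-stopping identity) and recovers $p_2$ from the relations $h_1=p_1+p_2q_{21}$, $h_2=p_2+p_1q_{12}$. Your single $2\times 2$ system is more compact; the paper's decomposition has the advantage that each ingredient is a two-set exit problem to which Lemma~\ref{l_exit_disks} applies directly, so the $\Psi$-errors appear one at a time rather than all at once inside a Cramer determinant. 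Either way, the key point in~(i) is that $\ln\|z-y\|=\ln\|y\|+O(\|y\|^{-1})$ \emph{pointwise} on $\partial\B(1)$, so the shape of the conditional entrance measure $\nu_1$ is irrelevant; your appeal to Proposition~\ref{p_prop_gx2} plus approximate uniformity is unnecessary (and slightly circular, since $\nu_1$ is conditioned on $\tau(1)<\tau(y,r)$, which Lemma~\ref{l_entr_harmonic} does not cover).

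There is, however, a real gap in your treatment of~(iii) and especially~(iv): these are not specializations of~(i). Once $y$ is close to~$\partial\B(1)$ (regime~(iv)) or $x$ is close to~$y$ (regime~(iii)), the quantity $b_1=\E_{\nu_1}[\ln\|z-y\|]$ is no longer pinned down by a pointwise bound, and the conditional entrance measure~$\nu_1$ on~$\partial\B(1)$ is neither uniform nor $H(y,\cdot)$. The conditioning on $\{\tau(1)<\tau(y,r)\}$ biases $\nu_1$ away from the side of~$\B(1)$ facing~$y$, and the size of that bias is governed by~$p_2$ itself. The paper makes this explicit via the identity
\[
\nu_1(\cdot)=\frac{1}{1-p_2}\Big(\mu(\cdot)\big(1+O(\|x\|^{-1})\big)-p_2\,H(y,\cdot)\big(1+O\big(\tfrac{r}{\|y\|-1}\big)\big)\Big),
\]
which, when fed back into the expression for $1-q_{12}$ (equivalently, into your~$b_1$), turns the system into a \emph{quadratic} equation for~$p_2$; solving it and expanding the discriminant is precisely what generates $\Psi_5,\Psi_6,\Psi_7$. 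Your linear $2\times 2$ system cannot produce these terms without first recognizing that one of its coefficients depends on the unknown. In~(iii) the issue is milder but still present: one needs $\nu_1\approx H(y,\cdot)$ with error $O(\Psi_4)$, and the paper obtains this via a last-exit argument rather than a direct application of~\eqref{H_RN}.
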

We now mention a remarkable property of the process~$\hW$, to be compared with the display just after~(36) in~\cite{CPV16} for the discrete case.
%%%%
\begin{rem} 
\label{rem:1/2} 
Observe that, for all $x \in \R^2\setminus \B(1)$ and for all $r>0$,
\eqref{eq_escape_from_disk} yields
\[
\IP_x \big[\htau(y,r)<\8\big] \to \frac{1}{2}
 \qquad \text{as }  \|y\|\to \8.
\]
\end{rem}
\begin{proof}
%$\rhd$ 
For the parts (i)--(ii),
one can use essentially the same argument as in the proof of Lemma~3.7
of~\cite{CPV16}; we sketch it here for completeness.
Fix some $R>\max\{\|x\|,\|y\|+r\}$. Define the quantities 
(see Figure~\ref{f_p12})
\begin{align*}
 h_1 &= \IP_x[\tau(1)<\tau(R)],\\
 h_2 &= \IP_x[\tau(y,r)<\tau(R)],\\
 p_1 &= \IP_x[\tau(1)<\tau(R)\wedge
 \tau(y,r)],\\
 p_2 &= \IP_x[\tau(y,r)<\tau(R)\wedge
 \tau(1)],\\
 q_{12} &= \IP_{\nu_1}[\tau(y,r)<\tau(R)],\\
 q_{21} &= \IP_{\nu_2}[\tau(1)<\tau(R)],
\end{align*}
where $\nu_1$ is the entrance measure to~$\B(1)$ starting from~$x$
conditioned on the event 
$\{\tau(1)<\tau(R)\wedge \tau(y,r)\}$, and~$\nu_2$ 
is the entrance measure to~$\B(y,r)$ starting from~$x$
conditioned on the event 
$\{\tau(y,r)<\tau(R)\wedge \tau(1)\}$.
\begin{figure}
\begin{center}
\includegraphics{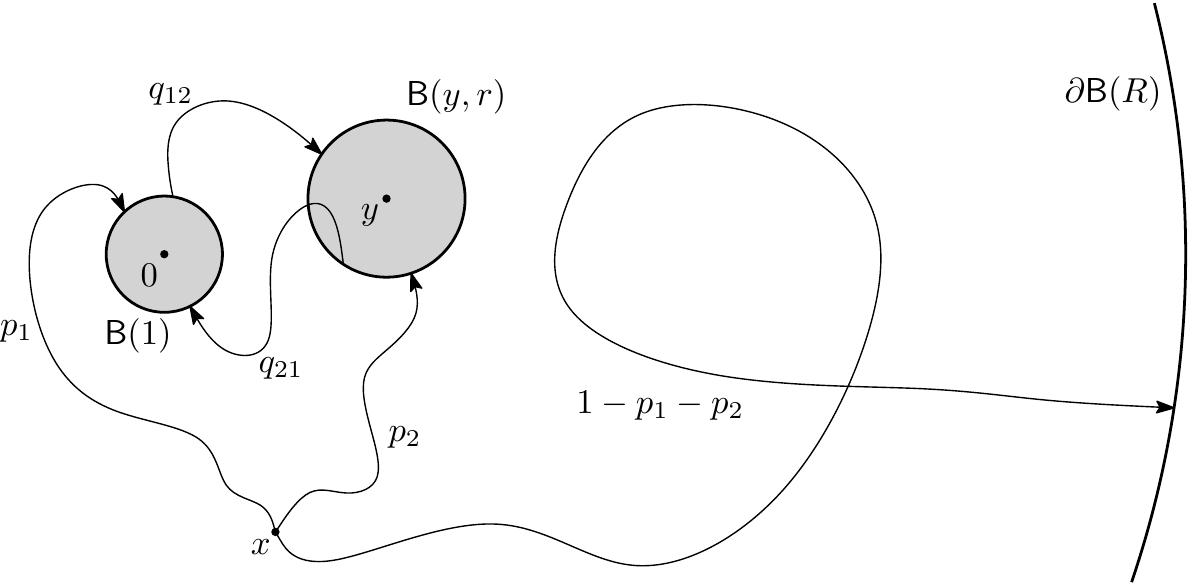}
\caption{On the proof of Lemma~\ref{l_escape_from_disk}}
\label{f_p12}
\end{center}
\end{figure}
Using Lemma~\ref{l_exit_disks}, we obtain 
\begin{align}
 h_1 &= 1-\frac{\ln\|x\|}{\ln R},
\label{expr_h1}\\
 h_2 &= 1-\frac{\ln\|x-y\|-\ln r}
 {\ln R-\ln r+O(R^{-1}\|y\|)},
\label{expr_h2}
\end{align}
and 
\begin{align}
q_{12} &= 1-\frac{\ln\|y\|
-\ln r+O(\Psi_1)}{\ln R-\ln r+O(R^{-1}\|y\|)},
\label{expr_q12} \\
q_{21} &=1-\frac{\ln\|y\|+O(\Psi_0)}{\ln R},
\label{expr_q21} 
\end{align}
using \eqref{hitting_BM} for the last line.
Then, we use the fact that, in general,
\begin{align*}
 h_1 &= p_1 + p_2q_{21},\\
 h_2 &= p_2 + p_1q_{12},
\end{align*}
   and therefore
\begin{align}
 p_1 &= \frac{h_1-h_2q_{21}}{1-q_{12}q_{21}},
\label{expr_p1}\\
 p_2 &= \frac{h_2-h_1q_{12}}{1-q_{12}q_{21}}.
\label{expr_p2}
\end{align}
We then write
\begin{equation}
\label{p_q_h}
 \IP_x[\tau(y,r)<\tau(R) \mid \tau(1)>\tau(R)]
 = \frac{p_2(1-q_{21})}{1-h_1}
  = \frac{(1-q_{21})(h_2-h_1q_{12})}{(1-h_1)(1-q_{12}q_{21})}.
\end{equation}
Note that~\eqref{expr_h1}--\eqref{expr_q21} imply that
\begin{align}
(1-h_1) \ln R &= \ln\|x\|, \label{1_h1}\\
\lim_{R\to\infty} (1-h_2) \ln R &= \ln\|x-y\|-\ln r, \label{1_h2}\\
\lim_{R\to\infty} (1-q_{21})\ln R &= \ln \|y\| + O(\Psi_0),
\label{1_q21}\\
\lim_{R\to\infty} (1-q_{12}q_{21})\ln R &= 2\ln \|y\| - \ln r
 +O(\Psi_1), \label{1_q12q21}\\
 \lim_{R\to\infty} (h_2-h_1q_{12})\ln R &= \ln\|x\|+\ln\|y\|
  -\ln\|x-y\| + O(\Psi_1), \label{h2_h1q12}\\
 \lim_{R\to\infty} (h_1-h_2q_{21})\ln R &= \ln\|x-y\|-\ln\|x\|
+\ln\|y\| - \ln r + O(\Psi_0) \label{h1_h2q21}.
\end{align}
We then plug \eqref{1_h1}--\eqref{h2_h1q12} into~\eqref{p_q_h}
and send~$R$ to infinity to obtain~\eqref{eq_escape_from_disk}.
The proof of~\eqref{eq_escape_from_anyset} is quite
analogous (one needs to use~\eqref{uncond_avoid_A} there;
note also that we indirectly assume in~(ii) that $r\geq 1$).

\medskip
\textit{Part~(iii).}
Next, we prove~\eqref{eq_escape_small_dsk_close}.
By~\eqref{H_RN}, we can write for any~$z$ such that 
$\|y-z\|\leq \|y-x\|$
\begin{equation}
\label{dHz/dHy}
 \Big|\frac{dH(z,\cdot)}{d H(y,\cdot)}-1\Big| 
 \leq O\Big(\frac{\|x-y\|}{\|y\|-1}\Big) .
%+ O\big((\ln R)^{-1}\big).
\end{equation}
Then, a last-exit-decomposition argument
implies that 
\begin{equation}
\label{nu1_case4}
 \Big|\frac{d\nu_1}{d H(y,\cdot)}-1\Big| 
 \leq O\Big(\frac{\|x-y\|}{\|y\|-1}\Big) 
+ O\big((\ln R)^{-1}\big).
\end{equation}

We then write
\begin{align*}
 1-q_{12} &= \int_{\partial\B(1)} 
\frac{\ln\|y-z\|-\ln r}{\ln R-\ln r + O(R^{-1}\|y\|)}\,d\nu_1(z)\\
 &= \int_{\partial\B(1)} 
\frac{\ln\|y-z\|-\ln r}{\ln R-\ln r + O(R^{-1}\|y\|)}
\Big(O\Big(\frac{\|x-y\|}{\|y\|-1}\Big) 
+ O\big((\ln R)^{-1}\big)\Big)
H(y,dz)\\
 &= \frac{\ell_y-\ln r}{\ln R-\ln r + O(R^{-1}\|y\|)}
\Big(O\Big(\frac{\|x-y\|}{\|y\|-1}\Big) 
+ O\big((\ln R)^{-1}\big)\Big).
\end{align*}

Then, we obtain the following refinements of
\eqref{1_q12q21}--\eqref{h2_h1q12}: 
\begin{align}
 \lim_{R\to\infty} (1-q_{12}q_{21})\ln R 
&= \Big(1+O\Big(\frac{\|x-y\|}{\|y\|-1}\Big)\Big)
(\ln r^{-1} + \ell_y) +\ln \|y\| +O(\Psi_0), 
\label{1_q12q21_close}\\
 \lim_{R\to\infty} (h_2-h_1q_{12})\ln R &= \ell_y
+\ln \|x\| -\ln \|x-y\| +O(\Psi_4). 
\label{h2_h1q12_close}
\end{align}
As before, the claim then follows from~\eqref{p_q_h}
(note that $\ln\|x\| - \ln\|y\| = O(\|x-y\|)$).

\medskip
%$\rhd$ 
\textit{Part~(iv).}
Finally, we prove~\eqref{eq_escape_verycloseto1}.
Recalling notations introduced just before Lemma \ref{l_entr_harmonic}, we denote for short  ${\tilde \nu}_x(\cdot)=\nu_{\B(1),x}^{0,R}$ the entrance measure into $\B(1)$ starting from $x$ conditioned on $\{\tau(1)< \tau(R)\}$, and $\mu=\mu_1$ the uniform law on the circle.
Notice that~\eqref{RN_entrance} implies
\[
 {\tilde \nu}_x(\cdot) = \mu(\cdot)\big(1+O(\|x\|^{-1})\big),
\]
and, on the other hand
\[
 {\tilde \nu}_x(\cdot) = (1-p_2)\nu_1(\cdot) 
   + p_2 H(y,\cdot)\big(1+O\big(\textstyle\frac{r}{\|y\|-1}\big)\big).
\]
We thus obtain
\begin{equation}
\label{expr_nu1}
  \nu_1(\cdot) = \frac{1}{1-p_2}
 \Big(\mu(\cdot)\big(1+O(\|x\|^{-1})\big)
-p_2H(y,\cdot)\big(1+O\big(\textstyle\frac{r}{\|y\|-1}\big)\big)\Big).
\end{equation}
Using Lemma~\ref{l_exit_disks} and Proposition~\ref{p_prop_gx2} in the definition of $q_{12}$,
 we  obtain
\begin{equation}
\label{new_1-q12}
 \lim_{R\to\infty} (1-q_{12})\ln R
 = \ln r^{-1} + \frac{1}{1-p_2}
\Big(\big(1+O(\|x\|^{-1})\ln \|y\| 
 - p_2\ell_y\big(1+O\big(\textstyle\frac{r}{\|y\|-1}\big) \big)\Big). 
\end{equation}
We then recall the expression~\eqref{expr_p2} to obtain that
\[
 p_2 = \frac{O\big(\frac{\|y\|}{|x\|}\big)+\frac{1}{1-p_2}
 \big[\ln\|y\|\big(1+O(\|x\|^{-1})\big)
-p_2\ell_y\big(1+O\big(\textstyle\frac{r}{\|y\|-1}
\big)\big)\big]}{\ln\|y\|+\ln r^{-1}+O\big(\frac{r}{\|y\|}\big)
+ \frac{1}{1-p_2}
 \big[\ln\|y\|\big(1+O(\|x\|^{-1})\big)
-p_2\ell_y\big(1+O\big(\textstyle\frac{r}{\|y\|-1}
\big)\big)\big]},
\]
which can be rewritten as
\begin{equation}
\label{eq_on_p2}
 p_2 = \frac{a-b p_2}{c-d p_2},
\end{equation}
where, being ${\hat \ell} = 
\ell_y\big(1+O\big(\textstyle\frac{r}{\|y\|-1}\big)\big)$
and $w=\ln\|y\|\big(1+O(\|x\|^{-1}))$,
\begin{align*}
 a &= w + O\big(\textstyle\frac{\|y\|}{\|x\|}\big),\\
 b &= {\hat \ell},\\
 c &= w+ \ln r^{-1}+\ln\|y\|+O\big(\textstyle\frac{r}{\|y\|}\big),\\
 d &= {\hat \ell}+ \ln\|y\|+\ln r^{-1}
% + O\big(\textstyle\frac{r}{\|y\|}\big)
\end{align*}
(we need to keep track of the expressions 
which are \emph{exactly} equal, not only up to $O$'s).
We then solve~\eqref{eq_on_p2} to obtain
\[
 p_2 = \frac{(b+c)-\sqrt{(b+c)^2-4ad}}{2d}.
\]
Note also that $\ln r^{-1}+\ell_y=O(\ln\frac{\|y\|-1}{r})$.
Then, after some elementary (but long)
 computations one can obtain that
%\textbf{(what's the sign of $b+c$???)}
% note that $b+c>0$
\[
\sqrt{(b+c)^2-4ad} = (\ell_y+\ln r^{-1})(1 + O(\Psi_5+\Psi_7)),
\]
which yields 
\begin{equation}
\label{new_expr_p2}
 p_2 = \frac{\ln\|y\| + O(\Psi_5+\Psi_7)}{\ln r^{-1}+\ln\|y\|
+\ell_y +O(\Psi_6)} .
\end{equation}
%%% see pp. 24-28 of cahier2017-2018
We use \eqref{p_q_h}, \eqref{1_h1}, 
and~\eqref{1_q21} to conclude the proof of 
Lemma~\ref{l_escape_from_disk}.
\end{proof}
% \begin{proof}[Proof of Proposition~\ref{prop:1/2}]
%  It is a direct consequence 
% of~(i) in Lemma~\ref{l_escape_from_disk}.
% \end{proof}

\subsection{Some geometric properties of~$\hW$
and the Wiener moustache}
\label{s_geom_hW}
For two stochastic processes $X^{(1)}, X^{(2)}$, we say that they 
coincide trajectory-wise
% (and write $X^{(1)}\sim X^{(2)}$)
if there exists a monotone (increasing or decreasing) stochastic
process~$\sigma$ such that a.s.\ it holds
that $X^{(1)}_{\sigma(t)}= X^{(2)}_t$ for all~$t$.
%\textbf{(or just require that the ranges are equal?)}

Recall that, in Remark \ref{rem_hW_b}, we denoted by~$\hW^r$ the Brownian motion
conditioned on never hitting~$\B(r)$, for $r>0$; 
%it can be constructed as a time change of $r\hW$
%by Proposition~\ref{prop:elemhW}~(iv), see Remark~\ref{rem_hW_b}.
also, it can be constructed as a time change of $r\hW$.
% clearly,
% it can be defined analogously to~$\hW$ of Section~\ref{s_moustache}.
Proposition~\ref{prop:elemhW} implies
% It is quite elementary to obtain 
that~$\hW^r$ 
can be seen as~$\hW$ conditioned on never hitting~$\B(r)$,
i.e., 
\begin{equation}
\label{Wr-W1}
 \IP_x\big[\hW_{[0,\htau(R)]}\in \cdot \mid \htau(R)< \htau(r)\big] =
   \IP_x\big[\hW^r_{[0,\htau^r(R)]}\in \cdot\,\big]
\end{equation}
for any~$R>r$ and any~$x$ such that $r \leq \|x\| < R$ (we have denoted $\htau^r(R)=\inf\{t \geq 0: \hW^r_t \geq R\}$).

Using the above fact, we can prove the following lemma:
\begin{lem}
\label{l_W_go_in}
Let~$\zeta$ be a random point
with uniform distribution on~$\partial \B(1)$ and let us fix $r_0>1$.
For $t\geq 0$ define $V_{-t}= r_0\hW^{(1)}_t$, $V_t= \hW^{(2)}_t$,
where $\hW^{(1,2)}$ are two independent conditioned 
(on not hitting~$\B(1)$) Brownian
motions started from~$\zeta$ and~$r_0\zeta$ correspondingly.
Denote 
\[
\QQ=\inf_{t\in\R} \|V_t\| = \inf_{t>0}\|\hW^{(2)}_t\|. 
\]
Let~$\eta$ be an instance of
Wiener moustache, as in Definition~\ref{df_moustache}.
Then, for all $r_0>1$ and $h \in (1,r_0)$,  
 the law of~$h\times \eta$ is a regular version of the 
conditional law of~$V_\R$ given~$\QQ=h$.
%the conditional law of~$V_\R$ given~$\QQ=h$
%is the same as the law of~$h\times \eta$. 
% {\fc [Here,  we view $V$ as a random set, it should be 
% written  $V_{ \R}$ to fit notation \eqref{eq:distinguish}.}]
\end{lem}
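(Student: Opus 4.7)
My strategy is to condition on the location of the minimum, $V_{\tau_h}$ where $\tau_h:=\inf\{t\geq 0:\|\hW^{(2)}_t\|=\QQ\}$ (the a.s.\ unique minimizer; note that $\|V_t\|\geq r_0>h$ for $t\leq 0$, so the infimum must occur in the $\hW^{(2)}$-part), and to identify the two halves of $V_\R$ about $V_{\tau_h}$ with the two independent $\hW^h$-arms of $h\eta$. By rotational invariance of the construction (uniformity of $\zeta$ and isotropy of $\hW^{(1,2)}$), the joint law of $(V_\R,\QQ)$ is rotation-invariant, so conditional on $\QQ=h$ the location $V_{\tau_h}$ is uniform on $\partial\B(h)$, matching the law of the starting point $h\zeta$ of $h\eta$. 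It therefore suffices to prove, for each fixed $u\in\partial\B(h)$, that conditional on $\{\QQ=h,\ V_{\tau_h}=u\}$ the set $V_\R$ is the union of two independent $\hW^h$-trajectories started at $u$.

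The \emph{forward half} $(V_{\tau_h+t})_{t\geq 0}$ is handled by the strong Markov property of $\hW^{(2)}$ at $\tau_h$: conditionally on $V_{\tau_h}=u$, this is $\hW$ starting from $u$ further conditioned on avoiding $\B(h)$ thereafter (which is exactly what $\QQ=h$ enforces on the future). By Proposition~\ref{prop:elemhW}(iii), extended via Definition~\ref{df_RR} to the boundary case $\|x\|=h$, this is $\hW^h$ started at $u$. Since $\hW^{(1)}$ is independent of $\hW^{(2)}$ and the conditioning event $\{\QQ=h\}$ only constrains the post-$\tau_h$ portion of $\hW^{(2)}$, this forward half is conditionally independent of the past half $V|_{(-\infty,\tau_h]}$ given $u$.

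The \emph{past half} $V|_{(-\infty,\tau_h]}$, once time-reversed to start at $u$, must be shown to be an independent $\hW^h$ started at $u$; this is the crux. Writing $\check V_s:=V_{\tau_h-s}$ for $s\geq 0$, the process $\check V$ concatenates the time-reversal of the $\hW^{(2)}$-bridge from $r_0\zeta$ to $u$ (indexed on $[0,\tau_h]$) with $\hW^{r_0}$ started at $r_0\zeta$ (indexed on $[\tau_h,\infty)$). My plan is to identify $\check V$ with $\hW^h$ from $u$ via the a.s.\ unique last-exit decomposition of $\hW^h$ at $\partial\B(r_0)$: if $\sigma_L$ denotes the last-visit time of $\hW^h$ to $\B(r_0)$ (a.s.\ finite by transience), at location $v\in\partial\B(r_0)$, then the scaling argument of Proposition~\ref{prop:elemhW}(iii) applied to $\hW^h$ conditioned on avoiding $\B(r_0)$ identifies the post-$\sigma_L$ portion as $\hW^{r_0}$ started at $v$, matching the second piece of $\check V$ provided the conditional laws of $r_0\zeta$ given $u$ and of $v$ given the starting point $u$ coincide. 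The pre-$\sigma_L$ portion is then matched with the reversed $\hW^{(2)}$-bridge using the reversibility of $\hW$ with respect to $\ln^2\|x\|\,dx$ (Proposition~\ref{prop:elemhW}(i)) together with the Doob identity~\eqref{def:hat_W}.

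The main obstacle lies precisely in this last step: justifying the bridge reversal and matching the joint last-exit distribution of $\hW^h$ on $\partial\B(r_0)$ with the conditional joint law of $(u,r_0\zeta)$. A more robust route I would pursue if the direct reversal becomes unwieldy is to approximate the null event $\{\QQ=h\}$ by $\{\QQ\in(h,h+\varepsilon)\}$, on which all decompositions factor at the \emph{first} hitting time $\tau_{h+\varepsilon}$ by standard strong-Markov arguments with no time-reversal needed, and then pass to the limit $\varepsilon\to 0$ using continuity of the trajectories and of the transition kernel~\eqref{df_hat_p}.
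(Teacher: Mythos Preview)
Your overall architecture---split $V$ at the location of its minimum, identify each half as a scaled conditioned Brownian trajectory, and use rotational invariance to get a uniform starting point on $\partial\B(h)$---matches the paper's. The gap is in the step you present as routine: $\tau_h$, the time at which $\|\hW^{(2)}\|$ attains its global infimum, is \emph{not} a stopping time, so ``the strong Markov property of $\hW^{(2)}$ at $\tau_h$'' is not available as stated. Decomposing a diffusion at its minimum, with the pre- and post-minimum pieces conditionally independent given the minimum value and with the identified conditional laws, is precisely the content of Williams' path-decomposition theorem, and that is the black box the paper imports. Your assertion that ``the conditioning event $\{\QQ=h\}$ only constrains the post-$\tau_h$ portion'' is circular for the same reason: the event also forces the pre-$\tau_h$ path to stay above level~$h$, and $\tau_h$ itself depends on the entire trajectory. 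The $\varepsilon$-approximation you propose does not sidestep this either: on $\{\QQ\in(h,h+\varepsilon)\}$ one can apply strong Markov at the genuine stopping time $\htau(h+\varepsilon)$, but the post-$\htau(h+\varepsilon)$ piece still contains the minimum together with the subsequent escape, so the two arms of the moustache have not been separated; identifying the $\varepsilon\to 0$ limit as a $\hW^h$-trajectory started on $\partial\B(h)$ is again a Williams-type statement. And the past half cannot be done ``with no time-reversal needed'': matching the reversed bridge to an initial segment of $\hW^h$ is intrinsically a reversibility statement.

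The paper's route is to project first onto the radial coordinate $\RR_t=\|V_t\|$ and apply Williams' theorem \cite[Theorem~2.4]{Will74} to this one-dimensional diffusion. This yields at once the a.s.\ uniqueness of the minimizer~$\sigma$, the conditional independence of the two pieces given~$\QQ$, and their conditional laws: the post-$\sigma$ piece is $\RR$ started at~$\QQ$ conditioned to stay in $[\QQ,\infty)$, which by the scaling of Proposition~\ref{prop:elemhW}(iv) is $\QQ$ times an $\RR$-process from~$1$; the pre-$\sigma$ piece is $\RR$ from~$r_0$ conditioned to converge to~$1$, stopped at first hitting of~$\QQ$, and by the reversibility of Proposition~\ref{prop:elemhW}(ii) its time-reversal, pasted with the independent $r_0\|\hW^{(1)}\|$, has the same law as the post-$\sigma$ piece. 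The two-dimensional process is then rebuilt by feeding the radial process into the angular SDE~\eqref{df_Theta_t} and invoking rotational invariance. Your last-exit heuristic for the past half is morally the two-dimensional shadow of this argument, but the reduction to the one-dimensional radial diffusion is what makes both Williams' decomposition and the reversibility cleanly applicable.
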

 In other words, the range~$V_{ \R}$ has same law as an
independent  product $ \QQ \times \eta$ of a Wiener moustache~$\eta$
 and a random variable~$\QQ$ distributed 
as in~\eqref{eq:dist-min}.

\begin{figure}
\begin{center}
\includegraphics{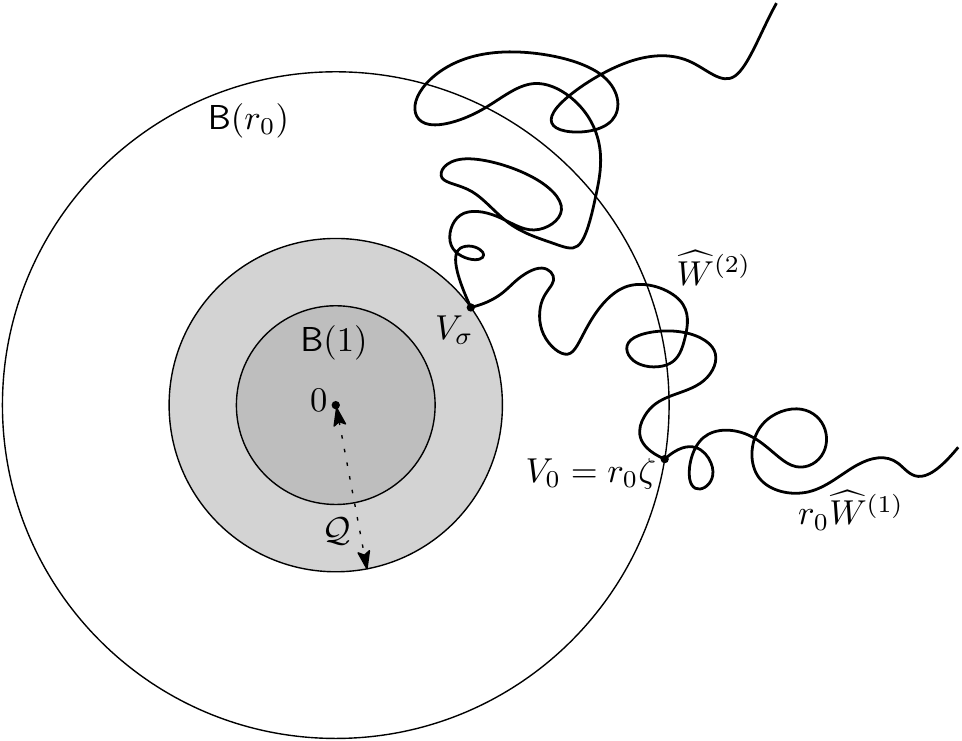}
\caption{On the proof of Lemma~\ref{l_W_go_in}}
\label{f_W_goes_in}
\end{center}
\end{figure}

\begin{proof}
The idea is to split $(V_t)_{t \geq 0}$ 
at its minimum distance from the origin. 
We use polar coordinates, and let us first study the radius 
$\RR_t=\|V_t\|$, starting with $t \in \R^+$.
 From~\eqref{escape_condBM} 
we derive the density of the minimal distance to the origin,
\begin{equation} \label{eq:dist-min}
 \QQ = \min\{ \RR_t; t \geq 0\} \sim \frac{1}{h \ln r_0} 
 \mathbf{1}_{[1,r_0]}(h) dh. 
\end{equation}
 Applying general techniques of path decomposition~\cite[Theorem 2.4]{Will74} to the one-dimensional 
diffusion~$\RR_t$, we obtain that
\begin{itemize}
\item[(a)] there a.s.\ exists a unique random time
$\sigma \in (0,\8)$ such that $\RR_{\sigma}=\QQ$;
\item[(b)] given~$\QQ$ and~$\sigma$, the process
$(\RR_t, t \in [0, \sigma])$ has the same law as
the diffusion~$\RR$ started at~$r_0$ and 
conditioned to converge to~1, 
observed up to the time of first hit of~$\QQ$;
\item[(c)]  given $\QQ, \sigma$ and $(\RR_t, t \in [0, \sigma])$,
the process $(\RR_{\sigma+t}, t \geq 0)$
has the same law as  the diffusion~$\RR$ started at~$\QQ$ and 
conditioned on staying in $[\QQ, \8)$. 
\end{itemize}
We recall that conditional diffusions are rigorously defined 
as in~\eqref{eq:BES2}, \eqref{eq:BES2cond}.
By the scaling property~(iv) of Proposition~\ref{prop:elemhW}, 
we see that the conditioned diffusion in (c) has the same law 
as an independent product $\QQ \times (\RR_t; t \geq 0)$
with $\RR_0=1$. For the  conditioned diffusion in (b), 
we use the reversibility property~(ii)  of 
Proposition~\ref{prop:elemhW}: Given~$\QQ$ and~$\sigma$, 
\[
\big(\RR_{\sigma -t}, t \in [0, \sigma]\big) 
\eqlaw \big(\RR_t', t \in [0, L_{r_0}']\big)\;,
\]
where~$\RR'$ is an independent~$\RR$-process 
(i.e., the norm of Brownian motion conditioned to be at least~$\QQ$)
starting from~$\QQ$ and where $L_{r_0}'=\sup\{t \geq 0: \RR_t'=r_0\}$ 
is the last exit time of~$\RR'$ from~$r_0$.
Pasting it with the (independent) piece for negative times 
we define a new process, 
\[
X_t=
\begin{cases}
\RR_{\sigma -t} , & t \in [0, \sigma],     \\
r_0 \big\| \hW^{(1)}_{t-\sigma}\big\|, 
\vphantom{\int\limits^{A}}& t > \sigma,
\end{cases}
\]
 which has the same law as the above process~$\RR'$. 
 % starting from~$1$, 
% using again the scaling property~(iv) of Proposition~\ref{prop:elemhW}.
 By (c), the processes $(\RR_{\sigma +t}, t \geq 0) $
 and~$X$ are independent given $\QQ$. 
 %\textbf{(need to think abou this\dots)}
%\medskip 

Now we consider the two-dimensional process~$V$. 
By rotational invariance, %symmetry,
it is clear that~$\QQ^{-1}V_\sigma$
is uniformly distributed on~$\partial \B(1)$. Once the norm~$\|V_t\|$
 is defined for all~$t$, one uses an independent Brownian 
motion and the SDE~\eqref{df_Theta_t} to construct the angle process
 $(\Theta_t; t \in \R)$. Given the angle~$\Theta_\sigma$, 
it holds that $(\Theta_t; t \geq 0)$ 
and $(\Theta_{-t}; t \geq 0)$ are independent
with same distribution as in~\eqref{df_Theta_t}.  

Finally, $(V_{\sigma+t}; t \geq 0)$ and $(V_{\sigma -t}; t \geq 0)$ 
are independent conditionally on~$V_\sigma$, 
and both distributed as $\|V_\sigma\| \times \hW$.
This concludes the proof. 
\end{proof}

\subsection{Harmonic measure and capacities}
\label{s_aux_capacities}
First, we need an expression on the harmonic measure~$\hhm_A$
with respect to the diffusion~$\hW$. 
\begin{lem}
\label{l_hat_hm}
 Assume that $\B(1)\subset A$ 
and let~$M$ be a measurable subset of~$\partial A$.
We have
\begin{equation}
\label{def_hat_hm}
 {\widehat{\hm}}_A(M) = \frac{\int_M \ln\|y\| \, d\hm_A(y)}
 {\int_{\partial A} \ln\|y\| \, d\hm_A(y)},
\end{equation}
that is, $\hhm_A$ is~$\hm_A$ biased by logarithm of the distance
to the origin.
\end{lem}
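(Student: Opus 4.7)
The plan is to exploit the Doob $h$-transform formula \eqref{def:hat_W}, combined with the fact (Lemma~\ref{l_entr_harmonic}) that the entrance measure $\nu_{A,x}$ of the Brownian motion $W$ into $A$ converges to the harmonic measure $\hm_A$ as $\|x\|\to\infty$, to express the numerator and denominator of the conditional probability defining $\hhm_A$ as integrals of $h(y)=\ln\|y\|$ against $\nu_{A,x}$.

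First I would introduce a truncation: for $R>0$ large enough that $A\subset\B(R)$ and $\B(1)\subset A$, consider the bounded stopping time $T_R=\tau(A)\wedge\tau(R)$. Since $\B(1)\subset A$, we have $\tau(A)\leq\tau(1)$ $\IP_x$-a.s., so on the event $\{\tau(A)<\tau(R)\}$ it holds that $T_R=\tau(A)<\tau(1)$ automatically (the harmonic measure of $\partial A\cap\6\B(1)$ being zero, cf.\ the remark just before Proposition~\ref{p_indeed_BRI}). Applying \eqref{def:hat_W} at the bounded stopping time $T_R$ with $B=\{W_{\tau(A)}\in M,\ \tau(A)<\tau(R)\}$ yields
\[
\IP_x\big[\hW_{\htau(A)}\in M,\ \htau(A)<\htau(R)\big]
= \E_x\Big[\mathbf{1}_{W_{\tau(A)}\in M,\ \tau(A)<\tau(R)}\,\frac{h(W_{\tau(A)})}{h(x)}\Big].
\]
Letting $R\to\infty$ on both sides (the RHS is monotone and $W$ is recurrent, so $\tau(A)<\infty$ $\IP_x$-a.s.), I obtain
\[
\IP_x\big[\hW_{\htau(A)}\in M,\ \htau(A)<\infty\big]
= \frac{1}{\ln\|x\|}\int_M \ln\|y\|\,d\nu_{A,x}(y),
\]
and, taking $M=\partial A$,
\[
\IP_x\big[\htau(A)<\infty\big]
= \frac{1}{\ln\|x\|}\int_{\partial A}\ln\|y\|\,d\nu_{A,x}(y).
\]

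Dividing the two identities and using \eqref{uRN_entrance} of Lemma~\ref{l_entr_harmonic} (which asserts that $d\nu_{A,x}/d\hm_A\to 1$ uniformly on $\partial A$ as $\|x\|\to\infty$, since $A$ is a fixed compact set), I can pass to the limit $\|x\|\to\infty$ to obtain
\[
\hhm_A(M)=\lim_{\|x\|\to\infty}\IP_x\big[\hW_{\htau(A)}\in M\mid \htau(A)<\infty\big]
=\frac{\int_M \ln\|y\|\,d\hm_A(y)}{\int_{\partial A}\ln\|y\|\,d\hm_A(y)},
\]
which is \eqref{def_hat_hm}. As a byproduct the limit exists, justifying the definition of $\hhm_A$ given just before the proposition.

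There is no real obstacle here; the only mild subtlety is ensuring that the indicator $\mathbf{1}_{T_R<\tau(1)}$ in \eqref{def:hat_W} is essentially redundant on $\{\tau(A)<\tau(R)\}$, and that the convergence of $\nu_{A,x}$ to $\hm_A$ is strong enough to pass to the limit inside the integrals against the bounded continuous weight $\ln\|y\|$ on $\partial A$ --- both of which follow directly from $\B(1)\subset A$ and from \eqref{uRN_entrance}, respectively.
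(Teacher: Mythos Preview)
Your proposal is correct and follows essentially the same route as the paper: both express the $\hW$-entrance law via the $h$-transform weight $\ln\|y\|$ against the Brownian entrance measure, then invoke Lemma~\ref{l_entr_harmonic} to pass to~$\hm_A$ as $\|x\|\to\infty$. The only organizational difference is that you send $R\to\infty$ first (using recurrence) and work with the unconditional entrance measure~$\nu_{A,x}$ and~\eqref{uRN_entrance}, whereas the paper keeps~$R$ finite, forms the ratio with the conditional entrance measure~$\nu^{R}_{A,x}$, and applies~\eqref{RN_entrance}; this is cosmetic.
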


\begin{proof}
Let $\nu^{R}_{A,x}:=\nu^{0,R}_{A,x}$ be the conditional entrance
measure to~$A$ starting at $x\in \B(R)\setminus A$,
given that $\tau(A)<\tau(R)$.
For $M\subset\partial A$ we can write, using
Lemma~\ref{l_conn_W_hW},
\begin{align}
 \IP_x\big[\hW_{\htau(A)}\in M\mid \htau(A)<\htau(R)\big] & = 
 \frac{\IP_x[W_{\tau(A)}\in M, \tau(A)<\tau(R)<\tau(1)]}
 {\IP_x[\tau(A)<\tau(R)<\tau(1)]}\nonumber\\
 &= \frac{\IP_x[W_{\tau(A)}\in M, \tau(R)<\tau(1) \mid \tau(A)<\tau(R)]}
 {\IP_x[\tau(R)<\tau(1)\mid \tau(A)<\tau(R)]}\nonumber\\
 &= \frac{\int_M \ln\|y\| \, d\nu^{R}_{A,x}(y)}
 {\int_{\partial A} \ln\|y\| \, d\nu^{R}_{A,x}(y)}
 \label{cond_entr_m}
\end{align}
(observe that, by~\eqref{hitting_BM}, one has to 
integrate~$\frac{\ln\|y\|}{\ln R}$ with respect to $\nu^{R}_{A,x}$,
and then the term $\ln R$ cancels). So, using~\eqref{RN_entrance} 
we obtain~\eqref{def_hat_hm}.
\end{proof}

Before proceeding, let us notice the following immediate consequence
of~\eqref{avoid_A_forever}: for any bounded $A\subset\R^2$
such that $\B(1)\subset A$, we have
\begin{equation}
\label{expr_Cap}
 \capa(A) = \lim_{\|x\|\to\infty} \frac{2}{\pi}\ln\|x\|
 \IP_x[\htau(A)<\infty].
\end{equation}

Next, we need estimates for the capacity
of a union of~$\B(1)$ with a ``distant'' set (in particular,
 a disk), and also that of a disjoint union of the unit disk and a set.
\begin{lem}
\label{l_cap_distantdisk}
  Assume that 
  $\|y\|>r+1$. 
  % $\|y\|>2r$
\begin{itemize}
 \item[(i)]
We have
\begin{equation}
\label{eq_cap_distdisk}
 \capa\big(\B(1)\cup \B(y,r)\big)
  = \frac{2}{\pi}\cdot
  \frac{\ln^2\|y\| +O(\|y\|^{-1}(r+1)\ln\|y\|)}
{2\ln\|y\|-\ln r+O(\|y\|^{-1}(r+1))}.
\end{equation}
 \item[(ii)] For any set $A$ 
such that $\B(y,1)\subset A  \subset \B(y,r)$, we have 
\begin{equation}
\label{eq_cap_distantset}
\capa\big(\B(1)\cup A\big) =\frac{2}{\pi}\cdot
\capa\big(\B(1)\cup A\big) =\frac{2}{\pi}\cdot
  \frac{\ln^2\|y\| +O(\|y\|^{-1} r\ln\|y\|\ln r)}
{2\ln\|y\|-\frac{\pi}{2}\capa( {\mathcal T}_{-y} A)+O(\|y\|^{-1}r)}.
\end{equation}
with $ {\mathcal T}_{y} $ the translation by vector $y$.
 \item[(iii)]  
Moreover, we have the following refinement of~\eqref{eq_cap_distdisk}
\begin{equation} \label{eq_cap_closedisk}
 \capa\big(\B(1)\cup \B(y,r)\big)  = \frac{2}{\pi}\cdot\frac{(\ln\|y\|+O(\Psi_0))(\ln\|y\|+O(\Psi_5+\Psi_7))}
{\ln r^{-1}+\ell_y+\ln\|y\|+O(\Psi_6)}
\end{equation}
with $\Psi_0, \Psi_5, \Psi_6, \Psi_7$ as in Lemma~\ref{l_escape_from_disk}. 
%
%%% see p.79 du cahier
\end{itemize}
\end{lem}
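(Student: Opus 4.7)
All three parts follow from the same recipe, based on the identity
\begin{equation*}
\capa(A') = \lim_{\|x\|\to\infty} \tfrac{2}{\pi}\ln\|x\|\cdot \IP_x[\htau(A')<\infty]
\end{equation*}
recorded in \eqref{expr_Cap}, applied to $A' = \B(1)\cup\B(y,r)$ in parts (i) and (iii), and to $A' = \B(1)\cup A$ in (ii). Since $\hW$ a.s.\ avoids the forbidden disk $\B(1)$, and since the outer component is disjoint from $\B(1)$ under the hypothesis $\|y\|>r+1$, the hitting time of $A'$ coincides with the hitting time of the outer component alone. Thus in each case it suffices to multiply the corresponding hitting-probability estimate from Lemma \ref{l_escape_from_disk} by $\ln\|x\|$ and let $\|x\|\to\infty$.

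For (i), I plug in \eqref{eq_escape_from_disk}. The only $x$-dependence on its right-hand side, beyond the $\ln\|x\|$ factor in the denominator that cancels, comes through $\ln\|x-y\|$. Using $\ln\|x\|-\ln\|x-y\|=O(\|y\|/\|x\|)\to 0$, the bracket $\ln\|x\|+\ln\|y\|-\ln\|x-y\|+O(\Psi_1)$ tends to $\ln\|y\|+O(\Psi_1)$. Expanding the resulting product of the two brackets and absorbing $O(\Psi_0)$ into $O(\Psi_1)=O(\|y\|^{-1}(r+1))$ yields \eqref{eq_cap_distdisk}. Part (ii) proceeds identically from \eqref{eq_escape_from_anyset}, upon observing that $\Psi_3=r\ln r\,(\|x-y\|^{-1}+\|y\|^{-1})\to\Psi_2$ as $\|x\|\to\infty$.

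For (iii), I apply \eqref{eq_escape_verycloseto1}. Here $\Psi_7$ contains a factor $1/\|x\|$ and hence vanishes in the limit, so the numerator carries only $O(\Psi_5)$ after taking $\|x\|\to\infty$; this implies a fortiori the weaker bound $O(\Psi_5+\Psi_7)$ stated in \eqref{eq_cap_closedisk}, while the denominator $\ln r^{-1}+\ln\|y\|+\ell_y+O(\Psi_6)$ is already independent of $x$. The substantive work has already been carried out in Lemma \ref{l_escape_from_disk}, so the plan reduces to careful bookkeeping of error terms across three regimes. The only step that merits some vigilance is checking in (iii) that the $x$-dependent error $\Psi_7$ is genuinely negligible in the limit rather than somehow entering the final bound nontrivially; this is the main (and still modest) obstacle I foresee.
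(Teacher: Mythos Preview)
Your proposal is correct and follows exactly the paper's approach: the paper's own proof is the single sentence ``All these expressions follow from~\eqref{expr_Cap} and Lemma~\ref{l_escape_from_disk},'' and you have simply spelled out the bookkeeping behind that sentence. Your observation that $\Psi_7\to 0$ as $\|x\|\to\infty$ (so that the stated bound $O(\Psi_5+\Psi_7)$ is slightly weaker than what the argument actually gives) is accurate and not addressed in the paper.
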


\begin{proof}
All these expressions follow from~\eqref{expr_Cap} 
and Lemma~\ref{l_escape_from_disk}.
\end{proof}

We also need to estimates the capacity of a 
union of two overlapping disks.
We first consider a (typically small) disk with center on 
the boundary of~$\B(1)$.
\begin{lem}
\label{l_cap_blister}
 For any $x\in\partial\B(1)$ and $r\in (0,2)$ it holds that
\begin{align}
\label{eq_cap_blister} 
\capa\big(\B(1)\cup \B(x,r)\big) 
&=\frac{2}{\pi} 
\ln\frac{2\sin(\pi\phi)}{(1+\phi)\sin(\pi\frac{1-\phi}{1+\phi})}\\ \nn
&= \frac{r^2}{\pi} (1+O(r))
\quad \text{ as $r\to 0$,}
\end{align}
where $\phi = \frac{2}{\pi} \arcsin \frac{r}{2}$.
\end{lem}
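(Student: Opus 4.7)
By rotational symmetry we may assume $x=1$, so that the boundary circles of $K:=\B(1)\cup\B(1,r)$ intersect at the conjugate pair $p_\pm=e^{\pm i\pi\phi}$ with $r=2\sin(\pi\phi/2)$. A brief computation with outward normals shows that at each point $p_\pm$ the complementary domain $D:=(\R^2\cup\{\infty\})\setminus K$ has interior angle $\pi(1+\phi)/2$. By~\eqref{expr_Cap} and standard potential theory, if $f\colon D\to\{|w|>1\}$ is the unique conformal map with $f(\infty)=\infty$ and $f(z)=z/c+O(1)$ for some $c>0$, then $\capa(K)=\tfrac{2}{\pi}\ln c$; so it is enough to compute $c$.

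The plan is to construct $f$ as an explicit four-fold composition. First, the Möbius transformation $T(z)=(z-p_+)/(z-p_-)$ sends $p_+\mapsto 0$, $p_-\mapsto\infty$ and $\infty\mapsto 1$; both bounding circles pass through $p_\pm$ and so are carried to straight lines through the origin of the $T$-plane. Checking the images of one further point on each circle shows that these lines have arguments $\pi\phi$ and $\pi\phi/2-\pi/2$, and since $T(\infty)=1$ sits at argument $0$, $T(D)$ is the sector of opening $\pi(1+\phi)/2$ containing the positive real axis. Second, a rotation by $e^{i(\pi/2-\pi\phi/2)}$ brings the sector to canonical form $\{0<\arg w<\pi(1+\phi)/2\}$ and sends $T(\infty)$ to $e^{i\pi(1-\phi)/2}$. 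Third, the power map $w\mapsto w^{2/(1+\phi)}$ opens the sector into the upper half-plane and carries $T(\infty)$ to $a:=e^{i\gamma}$ with $\gamma=\pi(1-\phi)/(1+\phi)\in(0,\pi)$. Finally, the Möbius map $\zeta\mapsto e^{i\theta}(\zeta-\bar a)/(\zeta-a)$ takes the upper half-plane onto $\{|w|>1\}$ and sends $a\mapsto\infty$; the phase $\theta$ is fixed by the requirement that the leading coefficient at $z=\infty$ be real and positive.

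To extract $c$, I expand $T(z)=1-2i\sin(\pi\phi)/z+O(z^{-2})$ at infinity and propagate this expansion through the three subsequent maps. A short calculation forces $\theta=\gamma+\pi$ and yields
\[
\frac{1}{c}=\frac{(1+\phi)\sin\bigl(\pi(1-\phi)/(1+\phi)\bigr)}{2\sin(\pi\phi)},
\]
which is the claimed closed-form. For the small-$r$ asymptotic I use the identity $\sin(\pi(1-\phi)/(1+\phi))=\sin(2\pi\phi/(1+\phi))$ to rewrite $c$ as $\frac{2\sin u}{(1+u/\pi)\sin(2u/(1+u/\pi))}$ with $u=\pi\phi=2\arcsin(r/2)=r+O(r^3)$; a direct Taylor expansion (in which the $1/\pi^2$ corrections of numerator and denominator cancel) gives $c=1+u^2/2+O(u^3)=1+r^2/2+O(r^3)$, whence $\capa(K)=\tfrac{2}{\pi}\ln c=(r^2/\pi)\bigl(1+O(r)\bigr)$.

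The main delicacy is the angular bookkeeping in the first step: the two image lines cut the plane into four sectors of angles $\pi(1\pm\phi)/2$ in alternation, and one must correctly single out $T(D)$ as the sector of opening $\pi(1+\phi)/2$ containing $T(\infty)$. This is cross-checked by the conformally preserved interior angle of $D$ computed at the outset; once this sector is fixed, the exponent $2/(1+\phi)$ in the power map, the location of $a$ in the upper half-plane, and the resulting formula for $c$ are all determined mechanically.
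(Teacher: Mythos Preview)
Your proof is correct and follows essentially the same route as the paper: reduce to $x=1$, send the two corner points $e^{\pm i\pi\phi}$ to $0$ and $\infty$ by a M\"obius map, rotate and straighten the resulting sector of angle $\tfrac{\pi}{2}(1+\phi)$ to the upper half-plane by a power map with exponent $2/(1+\phi)$, and finish with a M\"obius map to the disk exterior sending the image of~$\infty$ back to~$\infty$. Your leading coefficient $1/c$ coincides with the paper's $|c|$, and your slightly more detailed treatment of the small-$r$ asymptotic (via the identity $\sin\bigl(\pi(1-\phi)/(1+\phi)\bigr)=\sin\bigl(2\pi\phi/(1+\phi)\bigr)$) is a welcome addition where the paper simply says ``straightforward calculation.''
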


\begin{proof}
Clearly, without loss of generality one may assume that $x=1$;
let us abbreviate $A_r=\B(1)\cup \B(1,r)$.
 We intend to use Theorem~5.2.3 of~\cite{Ransford95};
for that, we need to find a conformal mapping~$f$
of $\C_\infty\setminus A_r$ onto $\C_\infty\setminus \B(s)$,
which sends~$\infty$ to~$\infty$ and such that 
$f(z)=z+O(1)$ as $z\to\infty$, where $s>1$
(and then Theorem~5.2.3 of~\cite{Ransford95} will 
imply that the capacities of~$A_r$ and~$\B(s)$ are equal).
Clearly, for this it is enough to map $\C_\infty\setminus A_r$ 
onto $\C_\infty\setminus \B(1)$ with~$f$ such that
$f(z)=cz+O(1)$, where $|c|\in(0,1)$, and then normalize.
That is, we then obtain that $c^{-1}f$ sends 
$\C_\infty\setminus A_r$ onto $\C_\infty\setminus \B(|c|^{-1})$,
which would give that $\capa(A_r)=\frac{2}{\pi}\ln |c|^{-1}$.

 First, it is elementary to obtain that $\partial\B(1)$
and~$\partial\B(1,r)$ intersect in the points $e^{i\pi\phi}$
and~$e^{-i\pi\phi}$. We then apply the map 
\begin{equation} \label{eq:f_1}
 f_1(z) = \frac{z-e^{i\pi\phi}}{z-e^{-i\pi\phi}},
\end{equation}
which sends the first of these points to~$0$ and the second
to~$\infty$. Observe also that $f_1(\infty)=1$.
Since~$f_1$ is a M\"obius transformation, it sends 
the two arcs that form $\partial A_r$ into two rays,
and therefore $\C_\infty\setminus A_r$ gets mapped
to a sector. The tangent vectors $v_{1,2}$ to the two arcs
at~$e^{i\pi\phi}$ have directions 
$\frac{\pi}{2}+\pi\phi$ and~$\frac{\pi\phi}{2}$ 
(see Figure~\ref{f_disk_bump}),
so the angle of that sector is 
$\frac{\pi}{2}+\pi\phi-\frac{\pi\phi}{2}=\frac{\pi}{2}(\phi+1)$.
\begin{figure}
\begin{center}
\includegraphics{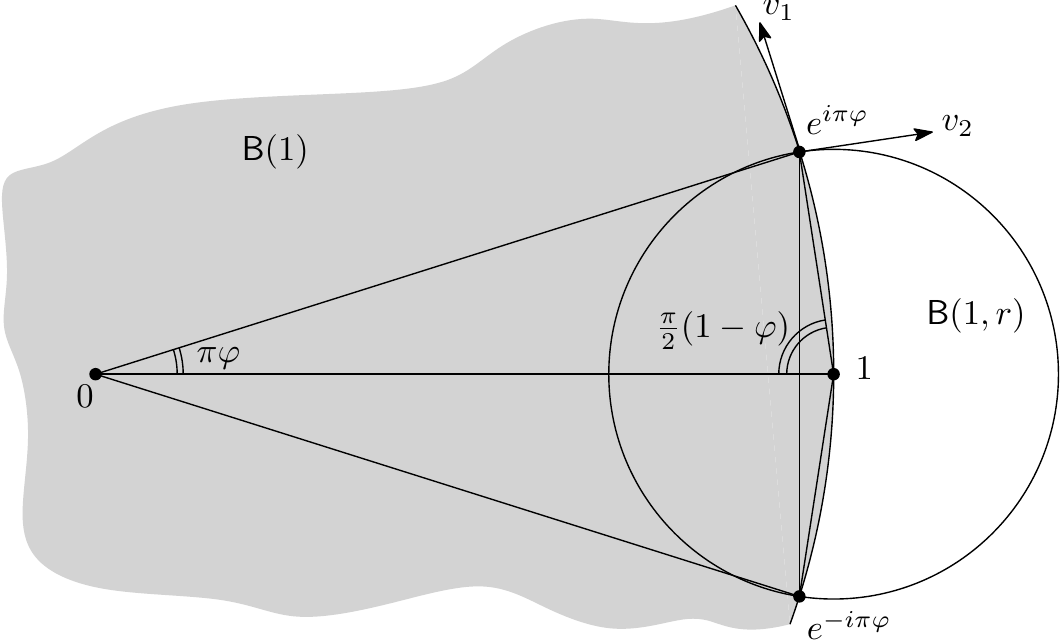}
\caption{The direction of vector~$v_1$ is $\frac{\pi}{2}+\pi\phi$,
and the direction of vector~$v_2$ is $\frac{\pi\phi}{2}$.}
\label{f_disk_bump}
\end{center}
\end{figure}

To see what are the directions of these two rays, 
observe that 
\[
 f'_1(z) = \frac{e^{i\pi\phi}-e^{-i\pi\phi}}{(z-e^{-i\pi\phi})^2},
\]
so $f'_1(e^{i\pi\phi})= (e^{i\pi\phi}-e^{-i\pi\phi})^{-1}
= \frac{-i}{2\sin \pi\phi}$.
This means that, at point~$e^{i\pi\phi}$, the map~$f_1$
rotates~$\frac{\pi}{2}$ clockwise, and so the direction 
of the first ray is $\pi\phi$ and the direction
of the second one is $(-i\frac{\pi}{2}(1-\phi))$.

Next, we apply the function 
\[
z\mapsto \big(e^{i\frac{\pi}{2}(1-\phi)} z\big)^{\frac{2}{1+\phi}}
% = e^{i\pi\frac{1-\phi}{1+\phi}}z^{\frac{2}{1+\phi}}
\]
 to send this sector to the upper half-plane
(we first rotate the sector so that its right boundary goes
to the positive part of the horizontal axis, and then
``open'' it so that its angle changes from~$\frac{\pi}{2}(\phi+1)$
to~$\pi$).
The point~$1$ (the image of~$\infty$ under~$f_1$)
is then sent to $e^{i\pi\frac{1-\phi}{1+\phi}}$.
Then, we need to map the upper half plane 
to~$\C_\infty\setminus \B(1)$ in such a way that 
$e^{i\pi\frac{1-\phi}{1+\phi}}$ is sent back to~$\infty$.
 This is achieved by the function 
\[
 z\mapsto \frac{z-e^{-i\pi\frac{1-\phi}{1+\phi}}}
{z-e^{i\pi\frac{1-\phi}{1+\phi}}}.
%  = \frac{e^{-i\pi\frac{1-\phi}{1+\phi}}z
%  -e^{i\pi\frac{4\phi}{1+\phi}}}
% {e^{-i\pi\frac{1-\phi}{1+\phi}}z-1}.
\]

Gathering the pieces, we then obtain that the map 
\begin{equation}
\label{final_map}
 f(z) =  \frac{\big(e^{i\frac{\pi}{2}(1-\phi)}\frac{z-e^{i\pi\phi}}
{z-e^{-i\pi\phi}}\big)^{\frac{2}{1+\phi}}
-e^{-i\pi\frac{1-\phi}{1+\phi}}}
{\big(e^{i\frac{\pi}{2}(1-\phi)}\frac{z-e^{i\pi\phi}}
{z-e^{-i\pi\phi}}\big)^{\frac{2}{1+\phi}}
-e^{i\pi\frac{1-\phi}{1+\phi}}}
\end{equation}
sends $\C_\infty\setminus A_r$ to $\C_\infty\setminus \B(1)$.

Next, we need to obtain the asymptotic behavior of the
above function as $z\to \infty$. 
First, it clearly holds that 
\begin{equation}
\label{modnum}
 \Big|\Big(e^{i\frac{\pi}{2}(1-\phi)}\frac{z-e^{i\pi\phi}}
{z-e^{-i\pi\phi}}\Big)^{\frac{2}{1+\phi}}
-e^{-i\pi\frac{1-\phi}{1+\phi}}\Big| 
\to  \Big|e^{i\pi\frac{1-\phi}{1+\phi}}
-e^{-i\pi\frac{1-\phi}{1+\phi}}\Big| 
 = 2\sin\big(\pi \textstyle\frac{1-\phi}{1+\phi}\big)
\quad \text{ as }z\to\infty.
\end{equation}
We have 
\[
 \frac{z-e^{i\pi\phi}}{z-e^{-i\pi\phi}}
= \frac{1-e^{i\pi\phi}z^{-1}}{1-e^{-i\pi\phi}z^{-1}}
 = 1-(e^{i\pi\phi}-e^{-i\pi\phi})z^{-1}+O(z^{-2})
=1-2i\sin(\pi\phi) z^{-1}+O(z^{-2}),
\]
so, we can write
\begin{align}
\lefteqn{
 \lim_{z\to\infty} |z|\cdot
\Big|\Big(e^{i\frac{\pi}{2}(1-\phi)}\frac{z-e^{i\pi\phi}}
{z-e^{-i\pi\phi}}\Big)^{\frac{2}{1+\phi}}
-e^{i\pi\frac{1-\phi}{1+\phi}} \Big|
}
\nonumber\\
 &=\lim_{z\to\infty} |z|\cdot 
 \Big|\big(1-2i\sin(\pi\phi) z^{-1}+O(z^{-2})\big)^{\frac{2}{1+\phi}}
-1\Big|\nonumber\\
&= \frac{4\sin(\pi\phi)}{1+\phi}.
\label{moddenom}
\end{align}
% 
% 
% so
% \[
%  \Big(\frac{z-e^{i\pi\phi}}
% {z-e^{-i\pi\phi}}\Big)^{\frac{2}{1+\phi}}
% = 1-\frac{4i\sin(\pi\phi)}{1+\phi} z^{-1}+O(z^{-2})
% \]
% as $z\to\infty$. We then have
% \begin{align}
%  f(z) &= \frac{1-\frac{4i\sin(\pi\phi)}{1+\phi} z^{-1}+O(z^{-2})
% -1-i\pi\frac{4\phi}{1+\phi}
% + \pi^2 \frac{16\phi^2}{(1+\phi)^2} +O(\phi^3)}
% {-\frac{4i\sin(\pi\phi)}{1+\phi} z^{-1}+O(z^{-2})} \nonumber\\
% &= \frac{\pi\phi+i\frac{4\pi^2\phi^2}{1+\phi}+O(\phi^3)}
% {\sin(\pi\phi)}z + O(1).
% \label{final_asymp}
% \end{align}
% \begin{align}
%  f(z) &= \frac{1-\frac{4i\sin(\pi\phi)}{1+\phi} z^{-1}+O(z^{-2})
% -e^{i\pi\frac{4\phi}{1+\phi}}}
% {-\frac{4i\sin(\pi\phi)}{1+\phi} z^{-1}+O(z^{-2})} \nonumber\\
% &= \frac{i(1+\phi)(1-e^{i\pi\frac{4\phi}{1+\phi}})}
% {4\sin(\pi\phi)}z + O(1).
% \label{almost_final_asymp}
% \end{align}
% Now, it is easy to obtain that 
% \begin{equation}
% \label{final_asymp}
%   \Big|\frac{i(1+\phi)(1-e^{i\pi\frac{4\phi}{1+\phi}})}
% {4\sin(\pi\phi)}\Big|
% = \frac{1+\phi}{4 \sin(\pi\phi)}
% \big|1-e^{i\pi\frac{4\phi}{1+\phi}}\big|
%  = \frac{(1+\phi)\sin\frac{2\pi\phi}{1+\phi}}{2 \sin(\pi\phi)}.
% %\sqrt{2\Big(1-\cos\frac{4\pi\phi}{1+\phi}\Big)}.
% \end{equation}
The relations~\eqref{modnum}--\eqref{moddenom} imply that 
the function in~\eqref{final_map} is $f(z)=cz+O(1)$ with
\[
 |c| =  2\sin\big(\pi {\textstyle\frac{1-\phi}{1+\phi}}\big)
  \Big( \frac{4\sin(\pi\phi)}{1+\phi}\Big)^{-1} = 
\frac{(1+\phi)\sin(\pi \frac{1-\phi}{1+\phi})}{2\sin(\pi\phi)}.
\]
This gives the exact formula in~\eqref{eq_cap_blister},
and then one obtains the asymptotic expression as
$r\to 0$ with a straightforward calculation.
% A direct calculation then implies that 
% the right-hand side of~\eqref{final_asymp} is
% \[
%  1 - \frac{\pi^2\phi^2}{2} + O(\phi^3)
% = 1- \frac{r^2}{2} + O(r^3).
% \]
\end{proof}

%\break
%
%{\bf DEBUT}

%
%\begin{rem} \label{rem:cap_blister} 
%By invariance by orthogonal transformations and scaling property of $\capa(\cdot)$, $\B(1) \cup \B (x, r)$ has same capacity as $r \times [\B(1) \cup \B (x, 1/r)]$.
%Hence, for $r< 1=\|x\|$, we derive from Lemma \ref{l_cap_blister} 
%\begin{equation} \nn
%\capa\big(\B(1)\cup \B(x,1/r)\big) 
%= \ln (1/r) +
%\frac{2}{\pi} 
%\ln\frac{2\sin(\pi\phi)}{(1+\phi)\sin\frac{2\pi\phi}{1+\phi}}
%\;,
%\end{equation}
%with $\phi = \frac{2}{\pi} \arcsin \frac{r}{2}$.
%\end{rem}

Also, we treat the case when the center of the second disk 
is in the interior of the first one.
\begin{lem}
\label{l_cap_interior}
 For  $\|x\| \in (0,1)$ and $r \in (1-\|x\|,1+\|x\|)$ it holds that
\begin{align}
\label{eq_cap_interior} 
\capa\big(\B(1)\cup \B(x,r)\big) 
&=\frac{2}{\pi} 
\ln\frac{\sin(\pi\phi)}{(1 \! + \! \phi \! - \! \psi)\sin\frac{\pi \phi}{1 \! + \! \phi \! - \! \psi}}\\ \nn
%%&=& \frac{2}{\pi x (1 \! - \! x)} (r  -  1  +  \|x\|) (1+O(r \! - \! 1 \! + \! \|x\|))
%&=& \frac{2}{\pi x (1 \! - \! x)} h (1+O(h))
%%\quad \text{ as $r\searrow 1-\|x\|$,}
&= \frac{4 \sqrt 2}{3\pi^2} \sqrt{\frac{ 1 \! - \! \|x\|}{\|x\|}} h^{3/2} +O(h^2)
\end{align}
as $h:=r -(1-\|x\|)  \searrow 0$, where 
\[
\phi = \frac{2}{\pi} \arcsin \Bigg(\frac{1}{2} 
\sqrt{\frac{r^2-(1-\|x\|)^2}{\|x\|}}\Bigg)\;,
\qquad \psi=\frac{2}{\pi} \arcsin \Bigg(  \frac{1}{2} 
\sqrt{\frac{(r+\|x\|)^2-1}{r\|x\|}}  \Bigg)\;. 
\]
\end{lem}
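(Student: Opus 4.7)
The plan is to adapt the conformal-mapping strategy of Lemma~\ref{l_cap_blister} to the present geometry. By rotational invariance we may assume $x=\|x\|\in(0,1)$ lies on the positive real axis, and I construct an explicit conformal map $f:\C_\infty\setminus(\B(1)\cup\B(x,r))\to\C_\infty\setminus\B(|c|^{-1})$ with $f(\infty)=\infty$ and $f(z)=z+O(1)$ as $z\to\infty$; Theorem~5.2.3 of~\cite{Ransford95} then gives $\capa(\B(1)\cup\B(x,r))=(2/\pi)\ln|c|^{-1}$.

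Solving the two circle equations together with $\sin^2(\alpha/2)=(1-\cos\alpha)/2$ identifies $\partial\B(1)\cap\partial\B(x,r)=\{p_+,p_-\}$ with $p_\pm=e^{\pm i\pi\phi}$ and $\phi$ exactly as in the statement. The outward normal to $\partial\B(1)$ at $p_+$ has argument $\pi\phi$; for the outward normal to $\partial\B(x,r)$ at $p_+$ I apply the law of cosines in the triangle with vertices $0, x, p_+$ (sides $\|x\|, r, 1$) and use $\cos^2(\gamma/2)=(1+\cos\gamma)/2$ to recognize the angle at $x$ as $\gamma=\pi(1-\psi)$, so this outward normal has argument $\pi\psi$.

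The map $f$ is the composition of four standard conformal pieces. The M\"obius $f_1(z)=(z-p_+)/(z-p_-)$ sends the two boundary arcs to rays through $0$, with $f_1(\infty)=1$ and $f_1'(p_+)=-i/(2\sin\pi\phi)$, which rotates tangent directions at $p_+$ by $-\pi/2$. Tracking the counterclockwise traversal of $\partial(\B(1)\cup\B(x,r))$ at $p_+$ (arriving along $\partial\B(x,r)$ and leaving along $\partial\B(1)$), I find that the resulting sector, which contains $1$, has opening angle $\pi(1+\phi-\psi)$. I then rotate by $\pi(1-\psi)$, apply the power $w\mapsto w^{1/(1+\phi-\psi)}$ to open the sector to the upper half-plane, and finally apply $w\mapsto(w-\bar q)/(w-q)$ with $q=\exp(i\pi(1-\psi)/(1+\phi-\psi))$, which sends the upper half-plane onto $\C_\infty\setminus\B(1)$ with $q\mapsto\infty$. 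Expanding $f_1(z)=1-2i\sin(\pi\phi)/z+O(z^{-2})$ and propagating through the three subsequent maps yields $|c|^{-1}=\sin(\pi\phi)/[(1+\phi-\psi)\sin(\pi(1-\psi)/(1+\phi-\psi))]$, and the identity $\sin(\pi-\theta)=\sin\theta$ rewrites this as the stated exact formula.

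For the $h\to 0^+$ asymptotic I substitute $r=1-\|x\|+h$ to obtain $\pi\phi=K\sqrt h+O(h^{3/2})$ and $\pi\psi=L\sqrt h+O(h^{3/2})$ with $K=\sqrt{2(1-\|x\|)/\|x\|}$ and $L=\sqrt{2/(\|x\|(1-\|x\|))}$. Writing $a=\pi\phi$ and $b=1+\phi-\psi$, the expansion
\[
\frac{\sin a}{b\sin(a/b)}=1+\frac{a^2(1-b^2)}{6b^2}+O(a^4),
\]
together with $1-b^2\sim-2(\phi-\psi)$, gives $\capa=(2\pi/3)\phi^2(\psi-\phi)(1+o(1))$, and substituting the leading expansions of $\phi,\psi$ produces the claimed $\tfrac{4\sqrt 2}{3\pi^2}\sqrt{(1-\|x\|)/\|x\|}\,h^{3/2}$. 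The main technical subtlety lies precisely here: every factor in the exact formula is $1+O(\sqrt h)$, so in order to extract an $h^{3/2}$ term one must verify that the $O(\sqrt h)$ contributions from numerator and denominator cancel exactly. A lesser but still delicate step is the orientation bookkeeping in the construction of $f$, since the interior-disk configuration differs from the blister of Lemma~\ref{l_cap_blister}, and one must carefully identify which side of each arc belongs to the unbounded component before applying the M\"obius and power maps.
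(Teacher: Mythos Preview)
Your proof is correct and follows essentially the same route as the paper's: the identical M\"obius map $f_1$, the same sector of opening $\pi(1+\phi-\psi)$, the same power map and final M\"obius to $\C_\infty\setminus\B(1)$, and the same expansion of $f_1$ at infinity to read off~$|c|$. Your identification of the angles via outward normals is equivalent to the paper's tangent-vector computation, and your asymptotic argument via the expansion of $\sin a/(b\sin(a/b))$ is in fact more explicit than the paper's, which dismisses this step as ``a tedious expansion.''
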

%%%
One can check that, at $\|x\|=1$, \eqref{eq_cap_interior} matches \eqref{eq_cap_blister}.

\begin{proof} We proceed exactly as in the proof of Lemma~\ref{l_cap_blister}. Without loss of generality 
we assume that~$x$ is on the positive semi-axis. 
We parametrize the intersection points of~$\partial\B(1)$ and~$\partial\B(x,r)$ by their angles $\pm \pi \hat \phi$ 
as seen from the origin and $\pm \pi \hat \psi$ 
as  seen from the point $x$.  
Writing the identity $e^{ i\pi \hat \phi}= x + r e^{  i\pi \hat \psi}$
 in terms of $\sin ( \pi \hat \phi/2), \sin ( \pi \hat \psi/2)$, 
we obtain
that $\hat \phi=\phi$ and  $\hat \psi=\psi$ with $\phi, \psi$ 
defined in the Lemma.
%we observe that~$\partial\B(1)$ and~$\partial\B(x,r)$
%intersect at points $e^{\pm i\pi\phi}$, and
% that $e^{ i\pi\phi}= x + r e^{ i\pi\psi}$ -- i.e., 
%$\pi \psi$ is the angle of the of the intersection 
%points seen from $x$.  
The tangent vectors $v_{1,2}$ at point $e^{ i\pi\phi}$ 
 now have directions $\pi( \phi + 1/2)$ and $\pi(\psi-1/2)$, 
so the sector has angle  $\pi (1+\phi-\psi)$. 
Applying the map $f_1$ from~\eqref{eq:f_1}, the domain
 $\C_\infty\setminus A_r$ gets mapped
to the sector determined the rays $e^{ i\pi\phi}$ 
and $e^{ -i\pi(1-\psi)}$ containing the point 1. 
Next we apply the function 
\[
z \mapsto e^{ i\pi\frac{1-\psi}{1+\phi-\psi}} z^{\frac{1}{1+\phi-\psi}}
\]
which maps this sector to the upper half-plane. 
Finally we compose with the function
 $z \mapsto \frac{z-\bar a}{z-a}$ with
 $a=e^{ i\pi\frac{1-\psi}{1+\phi-\psi}}$, i.e., the image of 1 
by the previous function. 

Collecting the pieces, we then obtain that the map 
\begin{equation}
\label{final_map2}
 f(z) =  \frac{\big(\frac{z-e^{i\pi\phi}}
{z-e^{-i\pi\phi}}\big)^{\frac{1}{1+\phi-\psi}}
-e^{-2i\pi\frac{1-\psi}{1+\phi-\psi}}}
{\big(\frac{z-e^{i\pi\phi}}
{z-e^{-i\pi\phi}}\big)^{\frac{1}{1+\phi-\psi}}-1}
\end{equation}
sends $\C_\infty\setminus A_r$ to $\C_\infty\setminus \B(1)$.

Next, we expand the
above function at $\infty$:
%We have 
%\[
% \frac{z-e^{i\pi\phi}}{z-e^{-i\pi\phi}}
%= \frac{1-e^{i\pi\phi}z^{-1}}{1-e^{-i\pi\phi}z^{-1}}
% = 1-(e^{i\pi\phi}-e^{-i\pi\phi})z^{-1}+O(z^{-2})
%=1-2i\sin(\pi\phi) z^{-1}+O(z^{-2}),
%\]
\[
 \Big(\frac{z-e^{i\pi\phi}}
{z-e^{-i\pi\phi}}\Big)^{\frac{1}{1+\phi-\psi}}
= 1-\frac{2i\sin(\pi\phi)}{1+\phi-\psi} z^{-1}+O(z^{-2}),
\]
yielding, as $z\to\infty$,
% $ f(z)= cz + O(1),$ with
\begin{equation}\nn
f(z)= cz + O(1)\;, 
\qquad c= \frac{i (1+\phi-\psi)}{2 \sin (\pi \phi)} \big( 1- e^{-2i\pi\frac{1-\psi}{1+\phi-\psi}}\big)\;,
\end{equation}
and so
\begin{align*}
|c|&= \frac{(1+\phi-\psi)}{ \sin (\pi \phi)} \sin \big(  \pi \frac{1-\psi}{1+\phi-\psi}\big)\\
&= \frac{(1+\phi-\psi)}{ \sin (\pi \phi)} \sin \big(  \pi \frac{\phi}{1+\phi-\psi}\big).
\end{align*}
This gives the exact formula in~\eqref{eq_cap_interior}.
Then, the asymptotic expression as $r \to 1-\|x\|$ follows from a tedious expansion.
\end{proof}
Next, we need a formula for the capacity of a union
of three unit disks; note that, unlike the discrete 
case (see Lemma 3.8 in \cite{CPV16}),
in the continuous setting there is no closed-form
exact expression for this capacity (at least the authors
are unaware of such).

%\textbf{(Question: shall we   rewrite all this for disks of radii $s$?)}

\begin{lem}
\label{l_cap_3disks}
 For $y,z$ such that the disks $\B(1), \B(y,1), \B(z,1)$ are disjoint,
abbreviate $A=\B(1)\cup \B(y,1)\cup \B(z,1)$ and
 $a=\ln\|y\|$, $b=\ln\|z\|$, $c=\ln\|y-z\|$.
Assume that, for some fixed~$\eps_0>0$, it holds that 
$\min\{a,b,c\}\geq \eps_0 \max\{a,b,c\}$.
 Then, we have
\begin{equation}
\label{eq_cap_3disks}
\capa(A) = \frac{2}{\pi} \cdot
\frac{2abc+O\big(\frac{\ln^2(a\vee b\vee c)}{a\wedge b\wedge c}
\big)}{2(ab +ac+bc)-(a^2+b^2+c^2)+O\big(\frac{\ln(a\vee b\vee c)}
{a\wedge b\wedge c}\big)}.
\end{equation}
\end{lem}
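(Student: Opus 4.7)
I use the definition~\eqref{df_Br_cap} together with an approximate decomposition of the harmonic measure $\hm_A$ from infinity as a weighted sum of the harmonic measures of each component disk. Set $D_0=\B(1)$, $D_1=\B(y,1)$, $D_2=\B(z,1)$, and write $y_0=0$, $y_1=y$, $y_2=z$. By Lemma~\ref{l_entr_harmonic} (applied to the Brownian motion entering each $D_i$ from a far-away point),
\[
\hm_A \;=\; m_0\hm_{D_0}+m_1\hm_{D_1}+m_2\hm_{D_2}+(\text{error}),
\]
with Frostman weights $m_i:=\hm_A(D_i)$ satisfying $m_0+m_1+m_2=1$. Proposition~\ref{p_prop_gx2} (and its translates) gives $\int_{\partial D_i}\ln\|w\|\,d\hm_{D_i}(w)=\ln\|y_i\|$ for $i=1,2$ and $0$ for $i=0$, so
\[
\capa(A)\;=\;\tfrac{2}{\pi}\int\ln\|y\|\,d\hm_A(y) \;=\; \tfrac{2}{\pi}(m_1a+m_2b)+(\text{error}),
\]
with $a=\ln\|y\|$, $b=\ln\|z\|$, $c=\ln\|y-z\|$.

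To identify the weights I invoke the translation invariance of $\capa$ (established in the paragraph following~\eqref{avoid_A_forever}): repeating the above computation after translating by $-y$ (resp.\ $-z$), so that $D_1$ (resp.\ $D_2$) plays the role of the central ball $\B(1)$, yields the further identities $\capa(A)=\tfrac{2}{\pi}(m_0a+m_2c)=\tfrac{2}{\pi}(m_0b+m_1c)$, since by translation invariance of the Brownian motion the Frostman weights do not change. Setting $V:=\tfrac{\pi}{2}\capa(A)$, the $m_i$ thus solve the linear system
\[
m_1a+m_2b=V,\quad m_0a+m_2c=V,\quad m_0b+m_1c=V,\quad m_0+m_1+m_2=1,
\]
whose coefficient matrix (read in the unknowns $(m_0,m_1,m_2,-V)$) has determinant $2(ab+bc+ca)-(a^2+b^2+c^2)$ after expansion along the last row. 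Cramer's rule then gives $V=\frac{2abc}{2(ab+bc+ca)-(a^2+b^2+c^2)}$, which is exactly the leading term of~\eqref{eq_cap_3disks}.

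\textbf{Main obstacle.} The real work is in controlling error terms. The decomposition $\hm_A\approx\sum m_i\hm_{D_i}$ carries an error coming from Lemma~\ref{l_entr_harmonic} together with an ``interaction'' contribution from BM trajectories that graze one disk before hitting another; this second source is what produces the extra factor $\ln M$ (with $M=\max\{a,b,c\}$) in the stated error. The replacement $\ln\|w\|\mapsto\ln\|y_i\|$ for $w\in\partial D_i$ is only $O(1/\|y_i\|)$ and is negligible. Under the hypothesis $\min\{a,b,c\}\geq\eps_0\max\{a,b,c\}$, the determinant stays of order $M^2$, so the matrix-entry perturbations propagate through Cramer's rule into the advertised $O(\ln M/\min\{a,b,c\})$ corrections in both numerator and denominator of~\eqref{eq_cap_3disks}. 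An alternative (more consistent with the probabilistic style of Lemma~\ref{l_escape_from_disk}) would be to use~\eqref{expr_Cap} directly: compute $\IP_x[\htau(A)<\infty]$ for large $\|x\|$ by strong Markov at the first entrance to $D_1\cup D_2$, obtain the leading expression $(a\bar p_1+b\bar p_2)/\ln\|x\|$ where $\bar p_i=\IP_x[\tau(D_i)<\tau(D_j)\wedge\tau(D_k)]$, and identify $m_i=\lim_{\|x\|\to\infty}\bar p_i(x)$ via the same translation-invariance argument.
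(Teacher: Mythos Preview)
Your approach is correct and genuinely different from the paper's. The paper does not work with the harmonic measure weights~$m_i$ at all: it fixes $r=\lceil \|y\|\vee\|z\|\rceil+1$, $x=(r^3,0)$, $R=r^5$, and computes $\IP_x[\tau(A)<\tau(R)]$ by solving the $3\times 3$ linear system $h_i=p_i+\sum_{j\neq i}q_{ji}p_j$ for the first-hitting probabilities $p_i=\IP_x[\tau(D_i)<\tau(R)\wedge\min_{j\neq i}\tau(D_j)]$, with $h_i=\IP_x[\tau(D_i)<\tau(R)]$ and $q_{ij}=\IP_{\nu_i}[\tau(D_j)<\tau(R)]$; the capacity is then extracted from~\eqref{uncond_avoid_A}. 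Your route---plug the definition~\eqref{df_Br_cap} in three translated frames and use translation invariance of $\capa$---is more direct and exploits the symmetry cleanly; the resulting $3\times 3$ system (with matrix $\big(\begin{smallmatrix}0&a&b\\a&0&c\\b&c&0\end{smallmatrix}\big)$ acting on the translation-invariant weights $m_i=\hm_A(\partial D_i)$) is exactly the ``dual'' of the paper's, and both collapse to the same rational function $2abc/[2(ab+bc+ca)-(a^2+b^2+c^2)]$.

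Two remarks on your error discussion. First, you do not actually need Lemma~\ref{l_entr_harmonic} or Proposition~\ref{p_prop_gx2}: since $\ln\|w\|=\ln\|y_i\|+O(\|y_i\|^{-1})$ \emph{pointwise} on $\partial D_i$, the identity $\int_{\partial D_i}\ln\|w\|\,d\hm_A=m_i\ln\|y_i\|+O(m_i\|y_i\|^{-1})$ holds regardless of the fine structure of $\hm_A|_{\partial D_i}$; no ``interaction contribution from trajectories that graze one disk before hitting another'' enters. Second, as a consequence, the errors in your three equations are $O(e^{-(a\wedge b\wedge c)})$, i.e.\ exponentially small in~$M$ under the hypothesis $\min\geq\eps_0\max$, which is much stronger than the $O(\ln M/m)$ stated in~\eqref{eq_cap_3disks}. (The paper's stated bound is presumably not tight; the authors omit the detailed error propagation.) So your ``main obstacle'' is essentially absent: once you note that $\ln\|\cdot\|$ is nearly constant on each unit circle, the linear algebra finishes the job.
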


\begin{proof}
 Let $r=\lceil \|y\|\vee \|z\|\rceil + 1$, and observe that $A\subset \B(r)$.
 The idea is to use~\eqref{uncond_avoid_A}
with~$x=(r^3,0)$ (so that $\ln\|x\|=3\ln r$) and $R=r^5$.
In this situation, \eqref{uncond_avoid_A} implies that 
\begin{equation}
\label{eq_3disk_R}
 \frac{\pi}{2}\capa(A) = \Big(1-\frac{2
+ O\big(\frac{1}{r^2}\big)}{5\IP_x[\tau(A)<\tau(R)]}\Big)\ln R 
 +  O\big(\textstyle\frac{\ln r}{r^2}\big).
\end{equation}
Then, we proceed similarly to the proof of Lemma~\ref{l_escape_from_disk}.
Let 
\begin{align*}
 p_1 &= \IP_x\big[\tau(A)<\tau(R),\tau(A)=\tau(\B(1))\big],\\
 p_2 &= \IP_x\big[\tau(A)<\tau(R),\tau(A)=\tau(\B(y,1))\big],\\
 p_3 &= \IP_x\big[\tau(A)<\tau(R),\tau(A)=\tau(\B(z,1))\big],
\end{align*}
so $\IP_x[\tau(A)<\tau(R)]=p_1+p_2+p_3$.
Next, denote (recall~\eqref{nothit_r})
\begin{align*}
 h_1 &= \IP_x\big[\tau(\B(1))<\tau(R)\big] = \frac{2}{5},\\
 h_2 &= \IP_x\big[\tau(\B(y,1))<\tau(R)\big] = \frac{2}{5}
  \big(1+ O\big(\textstyle\frac{1}{r^2\ln r}\big)\big),\\
 h_3 &= \IP_x\big[\tau(\B(z,1))<\tau(R)\big]= \frac{2}{5}
  \big(1+ O\big(\textstyle\frac{1}{r^2\ln r}\big)\big),
\end{align*}
(observe that $\frac{\ln\|x\|}{\ln R} = \frac{3}{5}$).
Let $\ha=a/\ln R$, $\hb=b/\ln R$,
$\hc=c/\ln R$. Then, let us denote
\begin{align*}
 q_{12} &= \IP_{\nu_1}\big[\tau(\B(y,1))<\tau(R)\big]
             = 1-\ha 
      \big(1+ O\big(\textstyle\frac{1}{\|y\|\ln \|y\|}\big)\big),\\
 q_{13} &= \IP_{\nu_1}\big[\tau(\B(z,1))<\tau(R)\big]
             = 1-\hb 
      \big(1+ O\big(\textstyle\frac{1}{\|z\|\ln \|z\|}\big)\big),\\
 q_{21} &= \IP_{\nu_2}\big[\tau(\B(1))<\tau(R)\big]
             = 1-\ha 
      \big(1+ O\big(\textstyle\frac{1}{\|y\|\ln \|y\|}\big)\big),\\
 q_{23} &= \IP_{\nu_2}\big[\tau(\B(z,1))<\tau(R)\big]
             = 1-\hc
      \big(1+ O\big(\textstyle\frac{1}{\|y-z\|\ln \|y-z\|}\big)\big),\\
 q_{31} &= \IP_{\nu_3}\big[\tau(\B(1))<\tau(R)\big]
             = 1-\hb
      \big(1+ O\big(\textstyle\frac{1}{\|z\|\ln \|z\|}\big)\big),\\
 q_{32} &= \IP_{\nu_1}\big[\tau(\B(y,1))<\tau(R)\big]
             = 1-\hc
      \big(1+ O\big(\textstyle\frac{1}{\|y-z\|\ln \|y-z\|}\big)\big),
\end{align*}
where~$\nu_1$ (respectively, $\nu_2$ and $\nu_3$) is the entrance
measure to $\B(1)$ (respectively, to $\B(y,1)$ and $\B(z,1)$)
conditioned on $\{\tau(1)<\tau(\B(y,1))\wedge
\tau(\B(z,1))\wedge \tau(R)\}$ (respectively,
on $\{\tau(\B(y,1))<\tau(1)\wedge \tau(\B(z,1))\wedge \tau(R)\}$ 
and $\{\tau(\B(z,1))<\tau(1)\wedge \tau(\B(y,1))\wedge \tau(R)\}$).

It is elementary to see that, as a general fact,
\begin{align*}
h_1 &= p_1 + p_2 q_{21} + p_3q_{31},\\
h_2 & = p_1 q_{12} + p_2 + p_3q_{32},\\
h_3 & = p_1 q_{13} + p_2q_{23} + p_3.
\end{align*}
Next, we solve the above system of linear equations with respect 
to $p_{1,2,3}$ and then use~\eqref{eq_3disk_R}
to obtain~\eqref{eq_cap_3disks}.
 We omit the precise calculations which are elementary but long; however, 
to convince the reader that the answer is correct, 
let us forget for a moment about the $O$'s in the above
expressions for $h$'s and $q$'s and see where 
it will lead us. 
Denote by~$\textbf{1}$ the column vector 
with all the coordinates being equal to~$1$, and define
the matrix
\[
 L= %\left(
      \begin{pmatrix}
     %\begin{array}{ccc}
      1 & 1-\ha &1-\hb\\
      1-\ha & 1 & 1-\hc\\
      1-\hb & 1-\hc& 1     
     %\end{array}
     \end{pmatrix}.
 %\right).
\]
Let $p'_{1,2,3}$ be the solutions of the above equations
``without the $O$'s'', i.e., with $\frac{2}{5}$ on the place
of $h_{1,2,3}$, and $1-\ha$ (respectively, $1-\hb$ and $1-\hc$)
on the place of~$q_{12}$ (respectively, $q_{13}$ and~$q_{23}$).
Clearly, we have that 
$p'_1+p'_2+p'_3 = \mathbf{1}^{\mathsf T} L^{-1}\mathbf{1}$.
Then, the right-hand side of~\eqref{eq_3disk_R}
would become 
\begin{align*}
 \Big(1-\frac{1}{\mathbf{1}^{\mathsf T} L^{-1}\mathbf{1}}\Big)\ln R
  &= \Big(1- \frac{2(\ha\hb+\ha\hc+\hb\hc)-(\ha^2+\hb^2+\hc^2)
  -2\ha\hb\hc}{2(\ha\hb+\ha\hc+\hb\hc)-(\ha^2+\hb^2+\hc^2)}\Big)\ln R\\
 &=\frac{2\ha\hb\hc}{2(\ha\hb+\ha\hc+\hb\hc)-(\ha^2+\hb^2+\hc^2)}\ln R\\
&= \frac{2abc}{2(ab+ac+bc)-(a^2+b^2+c^2)},
\end{align*}
which indeed agrees with~\eqref{eq_cap_3disks}.
As a final remark,
% Then,
let us also observe that inserting the $O$'s back is not 
problematic, because an elementary calculation shows that 
$|\det L|=2(\ha\hb+\ha\hc+\hb\hc)-(\ha^2+\hb^2+\hc^2)-2\ha\hb\hc$
is bounded away from~$0$ (note that, due to the triangle
inequality, at least \emph{two} of the quantities $a,b,c$
should be approximately equal to~$\ln r$,
and so at least two of $\ha,\hb,\hc$
are approximately~$\frac{1}{5}$; also, we assumed 
that the smallest of them must be bounded away from~$0$).
%%% voir les pages 84-85 de mon cahier 
\end{proof}

% \textbf{(rewrite it all) ; (do we need it really?)}

We also need to compare the harmonic measure on a set 
(typically distant from the origin) to
the entrance measure of~$\hW$ started far away from that set.
\begin{lem}
\label{l_entrance_hat_s}
Assume that the compact subset $A$ of $\R^2$ and
%$0\notin A$, 
$x\notin \B(1)$ are such  that
$2 \, \diam(A)+1<\min\!\big(\!\dist(x,A), 
\,\frac{1}{4} \dist(\B(1),A)\big)$.
%, $\|x\|-\dist(x,A)\geq \frac{1}{2}\dist(0,A)$. 
Abbreviate $u=\diam(A)$, $s=\dist(x,A)$. 
Assume also that $A'\subset \R^2$ is
compact or empty,
and such that $\dist(A,A')\geq s+1$ 
(for definiteness, we adopt the convention $\dist(A,\emptyset)=\infty$ for any~$A$).
Then, for all $M \subset \partial A$, it holds that
\begin{equation}
\label{eq_entrance_hat_s}
\IP_x\big[\hW_{\htau(A)}\in M\mid \htau(A)<\infty,
   \htau(A)<\htau(A')\big] 
= \hm_A(M)\Big(1+O\Big(\frac{u}{s}\Big)\Big).
\end{equation}
\end{lem}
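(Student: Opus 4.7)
The idea is to reduce from the conditioned diffusion $\hW$ to the standard Brownian motion $W$ via Lemma~\ref{l_conn_W_hW}, and then apply (a translated version of) Lemma~\ref{l_entr_harmonic}.

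\emph{Step~1 (reduction to $W$).} Fix $R$ much larger than $\|x\|$ and than all radii in sight, and apply Lemma~\ref{l_conn_W_hW} to the event $\{W_{\tau(A)}\in M,\,\tau(A)<\tau(R)\wedge\tau(A')\}$. On $\{\tau(R)<\tau(1)\}$ this forces $\tau(A)<\tau(R)<\tau(1)$, so the strong Markov property at $\tau(A)$ together with \eqref{hitting_BM} gives $\IP_y[\tau(R)<\tau(1)]=\ln\|y\|/\ln R$ for $y=W_{\tau(A)}\in A$; dividing by $\IP_x[\tau(R)<\tau(1)]=\ln\|x\|/\ln R$ and letting $R\to\infty$ (monotone convergence) yields
\[
\IP_x\big[\hW_{\htau(A)}\in M,\,\htau(A)<\infty,\,\htau(A)<\htau(A')\big] = \frac{1}{\ln\|x\|}\,\IE_x\big[\ln\|W_{\tau(A)}\|\,\mathbf{1}_{W_{\tau(A)}\in M}\,\mathbf{1}_E\big],
\]
where $E:=\{\tau(A)<\tau(A')\wedge\tau(1)\}$. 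Taking the ratio with the $M=\partial A$ case, the conditional entry of $\hW$ equals the $\ln\|y\|$-reweighting of the conditional entry $\nu^E_{A,x}(\cdot):=\IP_x[W_{\tau(A)}\in\cdot\mid E]$ of standard~$W$.

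\emph{Step~2 (conditional entry of $W$).} I show $\nu^E_{A,x}=\hm_A(1+O(u/s))$. The unconditional entry $\nu_{A,x}$ satisfies this by the translation of Lemma~\ref{l_entr_harmonic} to the setup $A-y_A\subset\B(u)$ with the source at $x-y_A$ of norm $\geq s$, where $y_A\in A$ is any reference point. To upgrade to the conditional version, I use a strong Markov/Harnack-type comparison: the conditional probability $\IP_x[E\mid W_{\tau(A)}=y]$ is nearly independent of $y\in\partial A$ up to a multiplicative factor $1+O(u/s)$, because $E$ depends on the path's behavior before reaching $A$, and both $A'$ and $\B(1)$ sit at distance $\gtrsim s$ from $\partial A$, so they ``see'' $\partial A$ essentially as a single point on this scale. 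This is implemented by inserting a sphere $\partial\B(y_A,2u)$ and comparing hitting probabilities from two nearby boundary points of $A$ via a last-exit decomposition, with the distant sets' influence controlled by \eqref{hitting_BM}.

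\emph{Step~3 ($\ln\|y\|$ weighting).} Since $\dist(\B(1),A)\geq 8u+4$, each $y\in\partial A$ satisfies $\|y\|\geq 8u+5$ and $|\|y\|-\|y_A\||\leq u$, so $\ln\|y\|/\ln\|y_A\|=1+O(u/(\|y_A\|\ln\|y_A\|))$. Substituting into the ratio from Step~1 shows that the $\ln\|y\|$-reweighting perturbs $\nu^E_{A,x}$ by a factor $1+O(u/(\|y_A\|\ln\|y_A\|))$, which is absorbed into $O(u/s)$ under the geometric hypotheses. Combining Steps 1--3 yields the claim.

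\emph{Main obstacle.} The delicate step is Step~2: proving that conditioning on $E$ barely biases the entry point on $\partial A$, i.e., that $y\mapsto\IP_x[E\mid W_{\tau(A)}=y]$ varies by only a factor $1+O(u/s)$ over $\partial A$. A priori the conditioning might favor boundary points facing away from $A'$; the required Harnack-type bound at scale~$u$ rests on the crucial fact that both $A'$ and $\B(1)$ lie at distance $\gtrsim s\gg u$, so their hitting probabilities are essentially uniform across $\partial A$, a fact best formalized via a time-reversal or last-exit comparison argument combined with \eqref{H_RN}.
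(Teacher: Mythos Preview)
Your overall architecture matches the paper's: reduce $\hW$ to $W$ via the $h$-transform (your Step~1 is correct and coincides with the paper's use of Lemma~\ref{l_conn_W_hW}), then control the conditional entrance law of $W$ into $A$, then handle the $\ln\|\cdot\|$ bias. The gap is in Step~2. Framing it as ``$y\mapsto\IP_x[E\mid W_{\tau(A)}=y]$ is nearly constant'' is awkward, since this conditions on the terminal point of a path segment and would need a bridge or time-reversal representation you do not supply. More concretely, your proposed last-exit at the sphere $\partial\B(y_A,2u)$ is at the wrong scale: from any point of $\partial\B(y_A,2u)$ the (conditional) entrance law into $A$ given by Lemma~\ref{l_entr_harmonic} is only $\hm_A\big(1+O(1)\big)$, which cannot deliver the required $O(u/s)$.

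The paper bypasses the Harnack/bridge issue entirely by doing the last-exit decomposition at the \emph{large} sphere $\partial\B(z_0,s/2)$, where $z_0\in A$ realizes $\|z_0-x\|=s$. On the event $G=\{\tau(A)<\tau(A')\wedge\tau(R),\ \tau(R)<\tau(1)\}$ it defines the (non-stopping) time $\sigma$ at which the final excursion from $\partial\B(z_0,s/2)$ to $A$ begins, this excursion being contained in $\B(z_0,s)$. The geometric hypotheses force $\B(1)\cup A'$ to lie \emph{outside} $\B(z_0,s)$, so that final excursion automatically avoids them; the entire conditioning on $E$ is thus absorbed into the law of $W_\sigma$ and does not bias the excursion's entrance point on $\partial A$. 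Conditionally on $W_\sigma=y\in\partial\B(z_0,s/2)$, the entrance into $A$ is exactly the conditional measure $\nu^{0,s}_{A,y}$ of~\eqref{RN_entrance} (after translating $z_0$ to the origin, with $r=u$), which is $\hm_A(1+O(u/s))$ uniformly in $y$. Integrating over the law of $W_\sigma$ gives $\nu^E_{A,x}=\hm_A(1+O(u/s))$ in one stroke. As a side remark on Step~3: your claim that $O\big(u/(\|y_A\|\ln\|y_A\|)\big)$ is absorbed into $O(u/s)$ amounts to $\|y_A\|\ln\|y_A\|\gtrsim s$, which is not obviously forced by the stated hypotheses (nothing prevents $x$ from being much farther from $A$ than $A$ is from $\B(1)$); you should check this separately.
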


\begin{proof}
This result is analogous to Lemma~3.10 of~\cite{CPV16},
but, since the proof of the latter contains an inaccuracy,
we give the proof here, also noting that the same argument
works in the discrete setting as well. 
% Say what we use: \eqref{def_hat_hm}, 
% Lemma~\ref{l_hm_distant}, 

%Observe that the assumptions imply that $\B(1)\cap A =\emptyset$.
Let $z_0\in A$ be such that $\|z_0-x\|=s$, and observe that
the assumptions imply that 
$(\B(1)\cup A')\cap \B(z_0,s) = \emptyset$.
Let us fix some~$R$ such that $A\cup A' \cup\{x\} \subset \B(R)$,
and abbreviate
\[
 G = \big\{\tau(A)<\tau(A')\wedge \tau(R), \tau(R)<\tau(1)\big\}.
\]
Define the (possibly infinite) random variable
\[
 \sigma = 
  \begin{cases}
   \inf\big\{t\geq 0: W_t\in\B(z_0,{\textstyle\frac{s}{2}}),
  % \displaystyle
   W_{[t,\tau(A)]}\cap \partial\B(z_0,s) = \emptyset\big\} &
    \text{ on } G,\\
    \infty & \text{ on } G^\complement
  \end{cases}
\]
to be the moment when the \emph{last} excursion between
$\B(z_0,\frac{s}{2})$ and~$\partial\B(z_0,s)$ starts;
formally, we also set $W_\sigma=\infty$ on $\{\sigma=\infty\}$.
Let us stress that~$\sigma$ is \emph{not} a stopping time;
it was defined in such a way that the law of the excursion
that begins at time~$\sigma$ is the law of a Brownian excursion
\emph{conditioned} to reach~$A$ (and then~$\partial B(R)$)
before going back to~$\B(1)$.
Let~$\nu$ be the joint law of 
the pair $(\sigma,W_\sigma)$ under~$\IP_x$.
Abbreviate also $\mathcal{H}=\R_+\times\partial\B(z_0,\frac{s}{2})$
and observe that $\int_{\mathcal{H}}\,d\nu(t,y) = \IP_x[G]$.

Now, using Lemma~\ref{l_conn_W_hW}, \eqref{hitting_BM},
 and~\eqref{RN_entrance},
we write
\begin{align*}
\lefteqn{
 \IP_x\big[\hW_{\htau(A)}\in M, \htau(A)<\htau(R)\wedge\htau(A')\big]
}\\ 
&= \IP_x \big[W_{\tau(A)}\in M, \tau(A)<\tau(R)\wedge\tau(A')
  \mid  \tau(R)<\tau(1)\big]\\
&= \frac{\ln R}{\ln\|x\|} \IP_x \big[W_{\tau(A)}\in M,
  \tau(A)<\tau(A')\wedge \tau(R), \tau(R)<\tau(1) \big]\\
&=\frac{\ln R}{\ln\|x\|} 
\int_{\mathcal{H}}
\IP_y \big[W_{\tau(A)}\in M\mid \tau(A)
  <\tau\big(\partial\B(z_0,{\textstyle\frac{s}{2}})\big) \big]
\,d\nu(t,y)\\
&=\frac{\ln R}{\ln\|x\|} \hm_A(M)\Big(1+O\Big(\frac{u}{s}\Big)\Big)
  \IP_x\big[\tau(A)<\tau(A')\wedge \tau(R), \tau(R)<\tau(1)\big]\\
&=\hm_A(M)\Big(1+O\Big(\frac{u}{s}\Big)\Big)
  \IP_x\big[\tau(A)<\tau(A')\wedge \tau(R)\mid \tau(R)<\tau(1)\big]\\
&=\hm_A(M)\Big(1+O\Big(\frac{u}{s}\Big)\Big)
  \IP_x\big[\htau(A)<\htau(A')\wedge \htau(R)\big],
\end{align*}
% \textbf{(is it correct now?)}
and we conclude the proof of Lemma~\ref{l_entrance_hat_s}
by sending~$R$ to infinity.
\end{proof}

\subsection{Excursions}
\label{s_excursions}
In this section we define \emph{excursions} of the Brownian
random interlacements and the Brownian motion on the torus.
The corresponding definitions for the discrete random interlacements
and the simple random walk on the torus are contained
in Section~3.4 of~\cite{CPV16}; since the definitions
are completely analogous in the continuous case, we make this
section rather sketchy.

Consider two closed sets $A$ and~$A'$
such that $A\subset (A')^o \subset \R^2_n$
(usually, they will be disks),
and let~$T_n(M)$ be the hitting time of a set~$M\subset \R^2_n$ 
for the process~$X$.
 By definition, an excursion~$\vr$ is a continuous
(in fact, Brownian) path that starts at~$\partial A$ and ends
on its first visit to~$\partial A'$, i.e., $\vr=(\vr_t, t\in [0,v])$,
where $\vr_0\in\partial A$, $\vr_v\in\partial A'$,
$\vr_s\notin\partial A'$ for all $s\in[0,v)$. 
To define these excursions, consider 
the following sequence of stopping times: 
\begin{align*}
 D_0 &= T_n(\partial A'),\\
 J_1 &= \inf\{t> D_0 : X_t \in \partial A\},\\
 D_1 &= \inf\{t> J_1 : X_t \in \partial A'\},
\end{align*}
and
\begin{align*}
 J_k &= \inf\{t> D_{k-1} : X_t \in \partial A\},\\
 D_k &= \inf\{t> J_k : X_t \in \partial A'\},
\end{align*}
for $k\geq 2$. Then,
denote by $Z^{(i)}=X_{|[J_i, D_i]}$ the $i$th excursion
of~$X$ between~$\partial A$ and~$\partial A'$, for $i\geq 1$.
Also, let $Z^{(0)}=X_{|[0,D_0]}$ be the ``initial''
excursion (it is possible, in fact, that it does not intersect
the set~$A$ at all). 
Recall that $t_\alpha:=\frac{2\alpha}{\pi}n^2\ln^2 n$
and define
\begin{align}
 N_\alpha &= \max\{k : J_k\leq t_\alpha\},\label{df_Na}
\\
 N'_\alpha &= \max\{k : D_k\leq t_\alpha\}\label{df_N'a}
\end{align}
to be the number of incomplete (respectively, complete)
excursions up to time~$t_\alpha$.

Observe that, quite analogously to the above, we can define
the excursions of the conditioned diffusion~$\hW$ 
between~$\partial A$ and~$\partial A'$
(this time, $A$ and~$A'$ are subsets of~$\R^2$);
 since~$\hW$ is transient,
the number of those will be a.s.\ finite. 
Next, we also define the excursions of (Brownian) random
interlacements. Suppose that the trajectories 
of the $\hW$-diffusions that intersect~$A$ are enumerated according
to the points of the generating one-dimensional Poisson process
(cf.\ Definition~\ref{df_BRI}).
For each trajectory from that list (say, the $j$th one,
denoted~$\hW^{(j)}$ and time-shifted in such a way 
that $\hW^{(j)}_s\notin A$ for all $s<0$ and $\hW^{(j)}_0 \in A$)
%and such that its label is less than~$\alpha$, 
define the stopping times 
\begin{align*}
 {\hat J}_1 &= 0,\\
 {\hat D}_1 &= \inf\big\{t> {\hat J}_1 : \hW^{(j)}_t \in \partial A'\big\},
\end{align*}
and
\begin{align*}
 {\hat J}_k &= \inf\big\{t> {\hat D}_{k-1} : \hW^{(j)}_t \in \partial A\big\},\\
 {\hat D}_k &= \inf\big\{t> {\hat J}_k : \hW^{(j)}_t \in \partial A'\big\},
\end{align*}
for $k\geq 2$. Let~$\ell_j=\inf\{k:{\hat J}_k=\infty\}-1$ be the
number of excursions corresponding to the $j$th trajectory.
The excursions of $\BRI(\alpha)$ between~$\partial A$ 
and~$\partial A'$ are defined by
\[
 \hZ ^{(i)}=\hW^{(j)}_{[{\hat J}_m, {\hat D}_m]},
\]
where 
$ i = m+\sum_{k=1}^{j-1} \ell_k$,
and $m=1,2,\ldots, \ell_j$. Let $R_\alpha$ be the number of
trajectories intersecting~$A$ on level~$\alpha$,
and denote ${\hat N}_\alpha = \sum_{k=1}^{R_\alpha} \ell_k$
to be the total number of excursions of~$\BRI(\alpha)$ 
between~$\partial A$ and~$\partial A'$.

Observe also that the above construction makes sense with $\alpha=\infty$
as well; we then obtain an infinite sequence of excursions
of $\BRI$ (=$\BRI(\infty)$) between~$\partial A$
and~$\partial A'$.

Next, we need to control the number of excursions between
the boundaries of two concentric disks on the torus:
\begin{lem}
\label{l_excursions_torus}
Consider the random variables $(J_k,D_k)$ defined in this section with 
$A=B(r)$ and $A'=B(R)$. Assume that $1<r<R<\frac{n}{2}$,
$m\geq 2$, and $\delta\in(0,c_0)$ for some $c_0>0$.
Then, there exist positive constants $c_1,c_2$ such that 
\begin{equation}
\label{eq_excursions_torus_g}
\IP\Big[J_m\notin\Big(\frac{(1 \!-\!\delta)m}{\pi}n^2 \ln \frac{R}{r},
\frac{(1\!+\!\delta)m}{\pi}n^2 \ln \frac{R}{r}\Big)\Big] 
    \leq   c_1\exp\Big(-c_2\delta^2 m
    \frac{R\big(1 \!- \!\frac{r}{R}\big)^6}{n\ln^2 r^{-1}}\Big),
\end{equation}
and the same result holds with~$D_m$ on the place
of~$J_m$.
\end{lem}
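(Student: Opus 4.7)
The plan is to decompose $J_m$ into a sum of nearly i.i.d.\ cycle durations and apply a Chernoff-type inequality to the resulting sum.

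I would write $J_m=\sum_{k=1}^{m}\xi_k$ with $\xi_k:=J_k-J_{k-1}$ and the convention $J_0:=0$. By the strong Markov property at each $J_k$, the sequence $(\xi_k)_{k\ge 1}$ is a Markov chain on the entry-point space $\partial\B(r)$; since the entrance distribution on $\partial\B(r)$ after one full cycle is close to the equilibrium distribution (uniform by rotational symmetry up to boundary effects from the torus), I would treat the $\xi_k$ as i.i.d.\ modulo a negligible error from the first cycle which is absorbed in $c_1$. Each cycle splits as $\xi_k=T_k+S_k$, where $T_k=D_{k-1}-J_{k-1}$ is the outward planar hitting time of $\partial\B(R)$ from $\partial\B(r)$ (mean $(R^2-r^2)/2$ and sub-exponential scale $\asymp R^2\ll\mu$, controlled by the Dirichlet spectrum on $\B(R)\setminus\B(r)$), and $S_k=J_k-D_{k-1}$ is the return hitting time of $\partial\B(r)$ on the torus starting from $\partial\B(R)$. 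The mean $E[S_k]$ is read off from the Poisson equation $\tfrac12\Delta u=-1$ on $\R^2_n\setminus\B(r)$ with $u|_{\partial\B(r)}=0$; using a theta-function/image representation of the torus Green function, one gets $E[S_k]=\frac{n^2}{\pi}\ln(R/r)(1+o(1))$, so $\mu:=E[\xi_k]$ matches the center of the stated interval.

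The core of the argument is a Laplace-transform estimate $E\bigl[\exp(\lambda(\xi_k-\mu))\bigr]\le\exp(c\lambda^2 V)$, valid for $|\lambda|\le K^{-1}$, with a variance proxy $V$ tailored to reproduce the claimed exponent. To obtain this I would introduce a second-level decomposition of each return leg $S_k$ into sub-excursions from $\partial\B(R)$ back to $\partial\B(R)$: by~\eqref{hitting_BM} a single such sub-excursion first hits $\partial\B(r)$ with probability of order $(1-r/R)/\ln(R/r)$, so $S_k$ is a geometric sum (with this small success parameter) of sub-excursion durations, each controlled by planar Brownian estimates inside $\B(R)\setminus\B(r)$ together with a torus-diffusion correction of order $n^2/R$ for the portion outside $\B(R)$. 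Composing the geometric tail with the exponential moments of the individual sub-excursions, and adding the negligible outward contribution, yields the desired Laplace bound.

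Once the Laplace estimate is established, the stated tail inequality follows by the standard optimization
\[
 \IP\bigl[|J_m-m\mu|>\delta m\mu\bigr]\le 2\inf_{\lambda}\exp\bigl(-\lambda\delta m\mu+m\log E[\exp(\lambda(\xi_1-\mu))]\bigr),
\]
at $\lambda\sim\delta m\mu/V$, giving a bound of the form $2\exp(-c\,\delta^2 m\mu^2/V)$ in the regime $\delta\in(0,c_0)$; both upper and lower deviations are handled by the same argument. The main obstacle I foresee is exactly the quantitative bookkeeping in the second-level decomposition: one has to track simultaneously the success probability $(1-r/R)/\ln(R/r)$, the size of failed sub-excursions, and the torus return time inside each failed sub-excursion with sharp dependence on $r$, $R$ and $n$, and only this careful accounting produces the precise factor $R(1-r/R)^6/(n\ln^2 r^{-1})$ in the exponent. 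The Markov-chain rather than literal-independence structure of the $\xi_k$ contributes at most a geometric correction which is harmless.
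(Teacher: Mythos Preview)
The paper does not prove this lemma at all: its entire proof is a one-line citation, ``This is Proposition~8.10 of~\cite{BK14}, with small adaptations, since we are working with torus of size~$n$ in the continuous setting as well.'' So you are attempting to reconstruct from scratch a result that the authors simply import.

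Your outline is essentially the strategy that underlies the cited reference: decompose $J_m$ into successive cycle lengths, identify the mean cycle length $\mu=\frac{n^2}{\pi}\ln(R/r)$ via the Poisson equation on the torus, control the Laplace transform of a single cycle by a second-level decomposition of the return leg into geometrically many sub-excursions, and finish with a Chernoff bound. This is the right architecture and there is no conceptual error. Two remarks, though. First, your appeal to ``i.i.d.\ modulo a negligible error'' is slightly imprecise: the torus breaks rotational symmetry, so the entry points on $\partial\B(r)$ are not uniform and the $\xi_k$ are genuinely only a Markov chain; the correct device is to bound the one-cycle Laplace transform \emph{uniformly} over starting points in $\partial\B(r)$ (or $\partial\B(R)$), which is how~\cite{BK14} proceeds, and this uniformity is what allows the product bound. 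Second, as you yourself note, the entire difficulty is the quantitative bookkeeping that produces the specific factor $R(1-r/R)^6/(n\ln^2 r^{-1})$ in the exponent; your sketch does not attempt this, and that computation is several pages in~\cite{BK14}. So your proposal is a correct plan, aligned with the literature, but it is a plan rather than a proof --- whereas the paper's ``proof'' is simply a pointer to where that plan has already been executed.
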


\begin{proof}
 This is Proposition~8.10 of~\cite{BK14}, with small adaptations,
 since we are working with torus of size~$n$ 
in the continuous setting as well.
\end{proof}

\section{Proofs of the main results}
\label{s_proofs}

\subsection{Proof of Theorems~\ref{p_indeed_BRI}, 
\ref{t_basic_prop} and \ref{t_basic_size}}

\begin{proof}[Proof of Proposition~\ref{p_indeed_BRI}]
 We start with a preliminary observation. 
Consider the process~$\hW$ started at some~$x\in\R^2$
with $\|x\|=r>1$, and consider the random variable
$H=\inf_{t>0}\|\hW_t\|$ to be the minimal distance
of the trajectory to the origin; note that~$H>1$ a.s..
% Proposition~\ref{p_basic_W}.
By~\eqref{escape_condBM}, it holds that $\IP_x[H\leq s] 
= \frac{\ln s}{\ln r}$, so~$H$ has density 
$f(s)=\frac{1}{s\ln r}\1{s\in[1,r]}$. But then, 
using Lemma~\ref{l_W_go_in}, %it is straightforward to 
we
see that the trace of $\BRI(\alpha)$ on~$\B(r)$
can be obtained in the following way: take 
$N\sim \text{Poisson}(2\alpha\ln r)$
 particles and place them on~$\partial \B(r)$ uniformly
and independently; then let these particles perform independent
$\hW$-diffusions. Indeed, the trace left by these diffusions 
on~$\B(r)$ has the same law as the trace of $\BRI(\alpha)$
defined as in Definition~\ref{df_BRI}.

Now, we are ready for the proof of part (i). 
Using~\eqref{avoid_A_forever}
and recalling that $\IE s^{N} = e^{2\alpha(s-1)\ln r}$, we write
% for $N\sim \text{Poisson}(2\alpha\ln r)$
\begin{align*}
 \IP\big[A\cap\BRI(\alpha)=\emptyset\big] &=
 \IE\big(\IP\big[A\cap\BRI(\alpha)=\emptyset\mid N\big]\big)\\
 &= \IE\Big( 1-\frac{\pi}{2}\big(1+O(r^{-1})\big)
 \frac{\capa(A)}{\ln r}\Big)^N\\
 &= \exp\big(-\pi\alpha\capa(A)
   (1+O(r^{-1}))\big),
\end{align*}
and we obtain~\eqref{eq_vacant_Bro} by sending~$r$ to infinity.

Observe also, since $\capa(\B(r))=\frac{2}{\pi}\ln r$
by~\eqref{capa_disk}, the above 
construction of $\BRI(\alpha)$ on~$\B(r)$
agrees with the ``constructive description''
in the part~(ii) of Proposition~\ref{p_indeed_BRI}
(note that $2\ln r = \pi \capa(\B(r))$). In fact, a calculation
completely analogous to the above 
(i.e., fix~$A$, start with independent
particles on~$\partial\B(r)$, and then send~$r$ to infinity)
provides the proof of the part~(ii).
\end{proof}

As we mentioned in Section~\ref{s_results}, 
Theorems~\ref{t_basic_prop}--\ref{t_torus} are quite analogous
to the corresponding results of~\cite{CP16,CPV16} for the 
discrete two-dimensional random interlacements,
and their proofs are quite analogous as well.
Therefore, we give only a sketch of the proofs, 
since the adaptations 
to the continuous setting
%of the discrete case arguments
are usually quite straightforward.

\begin{proof}[Proof of Theorem~\ref{t_basic_prop}]
The proof of part~(i) follows from the invariance of the capacity
with respect to isometries of~$\R^2$.
Using~\eqref{eq_cap_distdisk}, we obtain that
\[
\capa(\B(1)\cup\B(x,1)) = \frac{1}{\pi}
\big(1+O\big((\|x\|\ln \|x\|)^{-1}\big)\big)\ln \|x\|,
\]
and, together with~\eqref{eq_vacant_Bro}, this implies the part~(ii)
(the more general formula~\eqref{properties_BRI_ii'} 
follows from~\eqref{eq_cap_closedisk}).
Next,
observe that, by symmetry, Theorem~\ref{t_basic_prop}~(ii),
 and~\eqref{eq_cap_distantset} imply that
\begin{align*}
\IP[A\subset\V^\alpha \mid x\in\D_1(\V^\alpha)] 
&= \exp\Big(-\pi\alpha\big(\capa(A\cup\B(x,1))
-\capa(\B(1)\cup\B(x,1))\big)\Big)\\
&=\exp\Bigg(\!-\pi\alpha
\Big(
% \frac{\big(a(x)+O\big(\frac{r\ln r\ln\|x\|}{\|x\|}\big)\big)^2}
% {2a(x)-\capa(A)+O\big(\frac{r\ln r\ln\|x\|}{\|x\|}\big)}
\frac{2}{\pi}\cdot
  \frac{\ln^2\|x\| +O(\|x\|^{-1}(r+1)\ln\|x\|\ln(r+1))}
{2\ln\|x\|-\frac{\pi}{2}\capa(A)+O(\|x\|^{-1}(r+1))}\\
& \qquad\qquad\qquad\qquad\qquad -\frac{1}{\pi}
\big(1+O\big((\|x\|\ln \|x\|)^{-1}\big)\big)\ln \|x\|\Big)\!\Bigg)\\
&=\exp\Bigg(-\frac{\pi\alpha}{4}\capa(A)
\frac{1+O\big(\frac{r\ln r }{\|x\|}\big)}
{1-\frac{\pi\capa(A)}{4\ln\|x\|}
+O\big(\frac{r\ln r}{\|x\|\ln \|x\|}\big)}
% \Big(1+O\Big(\frac{\ln \diam(A)}{\ln \|x\|}\Big)\Big)
\Bigg),
\end{align*}
thus proving the part~(iii). Finally, the part~(iv)
follows from Lemma~\ref{l_cap_3disks} and~\eqref{properties_BRI_ii}.
\end{proof}

\begin{proof}[Proof of Theorem~\ref{t_basic_size}]

 The part~(i) follows directly from~\eqref{properties_BRI_ii}.

Let us deal with the part~(ii). 
First, we explain how the fact that $\D_s(\V^\alpha)$ is 
a.s.\ bounded for $\alpha>1$ implies the second part of the statement.
For a fixed~$\alpha$, we first choose $\eps>0$ such that
$\alpha-\eps>1$, and then use the superposition property~\eqref{superp_BRI}: 
$\D_s(\V^{\alpha-\eps})$
is a.s.\ compact, and with positive probability 
the ``$\BRI$-sausages'' $\BRI(\eps)+\B(s)$
will cover $\D_s(\V^{\alpha-\eps})\setminus \B(1-s+\delta)$.
The same kind of argument works for proving that this 
probability tends to~$1$ as $\alpha\to\infty$: for any~$\eps'>0$
there is~$R$ such that 
$\IP[\D_s(\V^{\alpha})\setminus \B(R)=\emptyset]>1-\eps'$;
then, we have many tries to cover~$\B(R)\setminus\B(1-s+\delta)$ by independent
copies of~$\BRI(1)$. 

Note that in~\cite{CPV16} one does not need the FKG
inequality in the proof of the corresponding statement,
due to the same kind of argument.

 From this point to the end of the proof, 
we consider the case $s=1$ to simplify the notations. 
The general case is similar.
To complete the proof of part~(ii), 
it remains to show that~$\D_1(\V^\alpha)$
is a.s.\ bounded for any~$\alpha>1$.
Let us abbreviate $a_0:=(1+\sqrt{2})^{-1}$, and consider
the square grid $2a_0\Z^2 \subset \R^2$. It is elementary 
to obtain that for any $x\in\R^2$ there exists $y\in 2a_0\Z^2$
such that $\B(y,a_0)\subset\B(x,1)$. This means that
\begin{equation}
\label{discretization}
 \{y\in 2a_0\Z^2 : \B(y,a_0)\subset\V^\alpha\} \text{ is finite }
\; \Rightarrow \;
\D_1(\V^\alpha)\text{ is bounded}.
\end{equation}

Let~$r>8$ (so that $\ln r > 2$).
 We use Lemmas~\ref{l_conn_W_hW} and~\ref{l_exit_disks} 
to obtain that, for any $x\in\partial\B(2r)$ and $y \in \B(r) \ \B(r/2)$,
\begin{align}
\lefteqn{ 
\IP_x\big[\htau(\B(y,a_0))<\htau(\partial\B(r\ln r))\big]
} \nonumber\\
 &=
  \frac{\ln(r\ln r)}{\ln (2r)}
\IP_x\big[\tau(\B(y,a_0))<\tau(\partial\B(r\ln r))<\tau(\B(1))\big]
\nonumber\\
 &= (1+o(1))\big(\IP_x\big[\tau(\B(y,a_0))<\tau(\partial\B(r\ln r))\big]
\nonumber\\
& \qquad\qquad - \IP_x\big[\tau(\B(y,a_0))
<\tau(\partial\B(r\ln r)),
\tau(\B(1))<\tau(\partial\B(r\ln r))\big]\big)
\label{f63interm}\\
 &= \frac{\ln\ln r}{\ln r}(1+o(1))
\label{f63}
\end{align}
(note that the first probability in~\eqref{f63interm}
is $\frac{\ln\ln r}{\ln r}(1+o(1))$ by Lemma~\ref{l_exit_disks}
and it is straightforward to obtain that the second
one is $O(\frac{(\ln\ln r)^2}{\ln^2 r})$).

Let~$N_{\alpha,r}$ be the number of $\hW$-excursions of $\BRI(\alpha)$
between~$\partial B(2r)$ and $\partial B(r\ln r)$. 
By~\eqref{escape_condBM} and~\eqref{capa_disk},
$N_{\alpha,r}$ is a compound Poisson random variable with
rate $2\alpha\ln(2r)$ and with 
Geometric($1-\frac{\ln 2r}{\ln(r\ln r)}$) terms.
Analogously to~(66) of~\cite{CPV16}, we can show that
\begin{equation}
\label{f66}
\IP\Big[N_{\alpha,r} \leq b\frac{2\alpha\ln^2 r}{\ln\ln r}\Big] 
\leq r^{-2\alpha(1-\sqrt{b})^2 (1+o(1))}
\end{equation}
for $b<1$.
Now, \eqref{f63} implies that for $y\in \B(r)\setminus \B(r/2)$
\begin{align*}
 \IP\Big[\B(y,a_0)\text{ is untouched by first }
 b\frac{2\alpha\ln^2 r}{\ln\ln r}\text{ excursions}\Big] 
& \leq \Big(1-\frac{\ln\ln r}{\ln r}
(1+o(1))\Big)^{b\frac{2\alpha\ln^2 r}{\ln\ln r}}
% \nonumber
\\
& = r^{-2b\alpha(1+o(1))},
% \label{hit_y_in_the_ring}
\end{align*}
so, by the union bound,
\begin{align}
\lefteqn{
 \IP\Big[\text{there exists }y\in 2a_0\Z^2\cap(\B(r) \!\setminus  \! \B(r/2))
\text{ such that }
\B(y,a_0)\in\V^\alpha , N_{\alpha,r} >
 b\frac{2\alpha\ln^2 r}{\ln\ln r}
\Big] 
} ~~~~~~~~~~~~~~~~~~~~~~~~~~~~~~~~~~~~~~~~~~~~~~~~~~~~~~~~~~~~~~~~~~~
~~~~~~~~~~~~~~~~~
\nonumber\\
&\leq r^{-2(b\alpha-1)(1+o(1))}.\phantom{\sum^A}
\label{cover_ring_b}
\end{align}
Using~\eqref{f66} and~\eqref{cover_ring_b} with 
$b=\frac{1}{4}\big(1+\frac{1}{\alpha}\big)^2$, we
obtain that
\begin{equation}
\label{covered_ring}
 \IP\big[\text{there exists }y\in 2a_0\Z^2\cap(\B(r) \!\setminus  \! \B(r/2))
\text{ such that }\B(y,a_0)\in\V^\alpha\big] 
\leq r^{-\frac{\alpha}{2}(1-\frac{1}{\alpha})^2(1+o(1))}.
\end{equation}
This implies that the set~$\D_1(\V^\alpha)$ is a.s.\
bounded, since
\begin{equation*}
% \label{BorCan}
 \{\D_1(\V^\alpha)\text{  unbounded}\}
= \big\{\D_1(\V^\alpha)\cap\big(\B(2^n) \! \setminus  \! \B(2^{n-1})\big)
\neq \emptyset
\text{ for infinitely many }n\big\},
\end{equation*}
and the Borel-Cantelli lemma together with~\eqref{covered_ring}
imply that the probability of the latter event equals~$0$.
This concludes the proof of part~(ii) of Theorem~\ref{t_basic_size}.
\medskip

Let us now prove the {\it part~(iii)}. First,
we deal with the critical case $\alpha=1$. 
Again, the proof is essentially
the same as in~\cite{CP16}, so we present only a sketch.
For $k\geq 1$ we denote $b_k= \exp\big(\exp(3^k)\big)$, 
and let $v_k = b_ke_1 \in \R^2$.
Fix some $\gamma\in (1,\sqrt{\pi/2})$, and consider the disks
 $B_k=\B(v_k,b_k^{1/2})$ and $B'_k=\B(v_k,\gamma b_k^{1/2})$.
 Let~$N_k$ be the number of excursions
between $\partial B_k$ and~$\partial B'_k$ in $\RI(1)$.
The main idea is that, although ``in average'' the number
of those excursions will be enough to cover~$B_k$
(this is due to the fact that the expected cover time
has a \emph{negative} second-order correction, see~\cite{BK14}), 
 the fluctuations of~$N_k$ are of much bigger order than those
of the excursion counts on the torus. Therefore, $N_k$'s
will be atypically low for \emph{some} $k$'s, thus leading
to non-covering of corresponding $B_k$'s.  

Let us now present some details. 
Lemma~\ref{l_cap_distantdisk}~(i) together with
Lemma~\ref{l_escape_from_disk}~(i) imply
that~$N_k$ is (approximately) compound Poisson
with rate $\frac{4}{3\pi}\big(1+O((\ln^{-1}b_k)\big)\ln b_k$ and
Geometric$\big(\frac{2\ln\gamma}{3\ln b_k}(1+O((\ln^{-1}b_k))\big)$
terms.
Then, standard arguments (see~(72) of~\cite{CP16}) imply that
\begin{equation}
\label{Nk_Normal}
\frac{\ln\gamma}{\sqrt{6}\ln^{3/2}b_k}
\Big(N_k - \frac{2}{\ln\gamma}\ln^2 b_k\Big)
 \convlaw \text{Normal(0,1).}
\end{equation}
Observe that $\frac{\pi}{4\gamma^2}>\frac{1}{2}$ by our 
choice of~$\gamma$. Choose some~$\beta\in (0,\frac{1}{2})$
in such a way that $\beta + \frac{\pi}{4\gamma^2}> 1$, 
and define~$q_\beta>0$ to be such that 
\[
 \int_{-\infty}^{-q_\beta}\frac{1}{\sqrt{2\pi}}e^{-x^2/2}\, dx = \beta. 
\]
Consider the sequence of events
\begin{equation}
\label{df_Phi_k}
 \Phi_k = \Big\{N_k\leq \frac{2}{\ln\gamma}\ln^2 b_k - 
    q_\beta\frac{\sqrt{6}\ln^{3/2}b_k}{\ln\gamma}\Big\}.
\end{equation}
Observe that~\eqref{Nk_Normal} clearly implies that
$\IP[\Phi_k]\to\beta$ as $k\to\infty$.
Analogously to~\cite{CP16} (see the proof of~(76) there)
it is possible to obtain that
\begin{equation}
\label{cond_Ak}
 \lim_{k\to\infty}\IP[\Phi_k\mid \D_{k-1}] = \beta \qquad \text{a.s.},
\end{equation}
where~$\D_j$ is the partition generated by the 
events $\Phi_1,\ldots, \Phi_j$. 
Roughly speaking, the idea is that the sequence~$(b_k)$ grows
so rapidly, that what happens on $B'_1,\ldots,B'_{k-1}$
has almost no influence on what is seen on~$B_k$.
Using~\eqref{cond_Ak}, we then obtain
\begin{equation}
\label{mnogo_Phi_k}
 \liminf_{n\to\infty} \frac{1}{n}\sum_{j=1}^n \1{\Phi_j}
   \geq \beta \qquad \text{a.s.}
\end{equation}

Now, let~$(\hZ^{(j),k},j\geq 1)$ be 
the $\RI$'s excursions between~$\partial B_k$ 
and~$\partial B'_k$, $k\geq 1$,
constructed as in Section~\ref{s_excursions}.
% Also, for $k\in [\Delta_1,\Delta_2]$ (to be specified later)
Let~$(\tZ^{(j),k},j\geq 1)$ be sequences 
of i.i.d.\ excursions, with starting points chosen 
% accordingly to~$\hhm_{B_k}^{B'_k}$.
uniformly on~$\partial B'_k$.
Next, let us define the sequence of \emph{independent} events
% (recall the definition of~$\Phi_k$ from~\eqref{df_Phi_k})
\begin{equation}
\label{df_I_k}
 \JJ_k= \Big\{ \exists x\in B_k :
x\notin \tZ^{(j),k}\;, \quad  \forall  j\leq
\frac{2}{\ln\gamma}\ln^2 b_k - \ln^{11/9}b_k\Big\},
\end{equation}
that is, $\JJ_k$ is the event that the set~$B_k$
is not completely covered by the first 
$\frac{2}{\ln\gamma}\ln^2 b_k - \ln^{11/9}b_k$
independent excursions.

Next, fix~$\delta_0>0$ such that 
$\beta+\frac{\pi}{4\gamma^2}>1+\delta_0$.
Quite analogously to Lemma~3.2 of~\cite{CP16},
one can prove the following fact:
 For all large enough~$k$ it holds that
\begin{equation}
\label{eq_compare_RW_RI}
  \IP[\JJ_k]\geq \frac{\pi}{4\gamma^2}-\delta_0.
\end{equation}
We only outline the proof of~\eqref{eq_compare_RW_RI}: 
\begin{itemize}
 \item consider a Brownian motion on a torus
of slightly bigger size (specifically, $(\gamma+\eps_1)b_k^{1/2}$),
so that the set~$B'_k$ would ``completely fit'' there;
 \item we recall a known result (of~\cite{BK14}) that, up to time
\[
\hspace{-5ex}
 t_k = \frac{2}{\pi} 
 (\gamma \! + \! \eps_1)^2b_k\ln^2 \big((\gamma \! + \! \eps_1)b_k^{1/2}\big)
   - {\hat c} (\gamma \! + \! \eps_1)^2b_k
\ln \big((\gamma \! + \! \eps_1)b_k^{1/2}\big) 
\ln\ln \big((\gamma \! + \! \eps_1)b_k^{1/2}\big)
\]
the torus is not completely covered
with high probability;
 \item using soft local times, we \emph{couple} 
the i.i.d.\ excursions between~$\partial B_k$ 
and~$\partial B'_k$ with the 
Brownian motion's excursions between the corresponding
sets on the torus;
 \item using Lemma~2.9 of~\cite{CP16} adapted to the 
present setting (see also Section~6 of~\cite{BGP16}), we 
conclude that the set 
of Brownian motion's excursions un the torus up to time~$t_k$
is likely to contain
the set of i.i.d.\ excursions;
 \item finally, we note that the Brownian motion's excursions
will not completely cover the smaller disk with at least constant
probability, and this implies~\eqref{eq_compare_RW_RI}.
\end{itemize}

Then, analogously to (88)--(91) of~\cite{CP16}, 
we can prove that, for all but a finite number of~$k$'s,
 the set of $\frac{2}{\ln\gamma}\ln^2 b_k 
- q_\beta\frac{\sqrt{6}\ln^{3/2}b_k}{\ln\gamma}$
$\BRI$'s excursions between~$\partial B_k$ and~$\partial B'_k$ 
(recall~\eqref{df_Phi_k}) is contained in the set of 
$\frac{2}{\ln\gamma}\ln^2 b_k - \ln^{11/9}b_k$
independent excursions. 
Since (recall~\eqref{mnogo_Phi_k} and~\eqref{eq_compare_RW_RI})
$\beta+\frac{\pi}{4\gamma^2}-\delta_0>1$, for at least 
a positive proportion of $k$'s the events~$\Phi_k\cap\JJ_k$
occur. This implies that $\D_1(\V^1)\cap B_k \neq \emptyset$
for infinitely many $k$'s, thus proving 
that~$\D_1(\V^1)$ is a.s.\ unbounded.
\medskip

Now, it remains only to prove that~\eqref{eq_emptydisk}
holds for $\alpha<1$. 
Fix some $\gamma\in (1,\sqrt{\pi/2})$ and $\beta\in (0,1)$, 
which will be later taken close  to~$1$, and fix some set of
 non-intersecting 
disks~${\tilde B}'_1=\B(x_1,\gamma r^\beta),
\ldots,{\tilde B}'_{k_r}=\B(x_{k_r},\gamma r^\beta)\subset
\B(r)\setminus \B(r/2)$, with cardinality $k_r = \frac{1}{8}r^{2(1-\beta)}$.
Denote also ${\tilde B}_j:= \B(x_j, r^\beta)$,
$j=1,\ldots,k_r$. 

By Lemma~\ref{l_cap_distantdisk}~(i), 
 the number of $\hW$-diffusions in $\BRI(\alpha)$ 
intersecting a given disk~${\tilde B}_{j}$
has Poisson law with parameter 
$\lambda=(1+o(1))\frac{2\alpha}{2-\beta}\ln r$. 
By Lemma~\ref{l_escape_from_disk}~(i), the probability that
a $\hW$-diffusion started from any $y\in\partial {\tilde B}'_{j}$
does not hit~${\tilde B}_{j}$ is 
$(1+o(1))\frac{\ln\gamma}{(2-\beta)\ln r}$. 
Let ${\hat N}_\alpha^{(j)}$ be the total number of excursions 
between~$\partial {\tilde B}_{j}$ and~$\partial {\tilde B}'_{j}$ 
in $\BRI(\alpha)$.
Quite analogously to~(57) of~\cite{CPV16}, we obtain
\begin{align}
\IP\Big[{\hat N}_\alpha^{(j)}\geq b\frac{2\alpha\ln^2 r}
{\ln\gamma}\Big] &\leq
\exp\Big(-(1+o(1))\big(\sqrt{b}-1\big)^2 \frac{2\alpha}{2-\beta}
\ln r\Big) \nonumber\\
&=  r^{-(1+o(1))(\sqrt{b}-1)^2 \frac{2\alpha}{2-\beta}}.
\label{LD_numb_exc}
\end{align}
Let~$U_b$ be the set 
\[
 U_b = \Big\{j\leq k_r: {\hat N}_\alpha^{(j)} 
< b\frac{2\alpha\ln^2 r}{\ln\gamma}\Big\}.
\]
Then, just as in~(58) of~\cite{CPV16}, we obtain that 
\begin{equation}
\label{manydisks}
\IP\big[|U_b|\geq k_r/2\big] \geq 1-2r^{-(1+o(1))(\sqrt{b}-1)^2
\frac{2\alpha}{2-\beta}}.
\end{equation}

We then again use the idea of 
 comparing the (almost) Brownian excursions between
$\partial {\tilde B}_{j}$ and~$\partial {\tilde B}'_{j}$
 with the Brownian 
excursions on a (slightly larger) torus containing
a copy of~${\tilde B}'_{j}$.
In this way, we see that the ``critical''
number of excursions there is $\frac{2\beta^2 \ln^r}{\ln \gamma}$,
up to terms of smaller order. 
So, let us assume that $\beta<1$ 
is such that $\beta^2<\alpha$.

We then  repeat the arguments we used in the case $\alpha=1$
(that is, use soft local times for constructing 
the independent excursions together with the Brownian
motion's excursions etc.) to prove that the probability
that all the disks $({\tilde B}_{j}, j=1,\ldots,k_r)$
are completely covered is small (in fact, of a subpolynomial order
in~$r$), to show that, for any fixed $h>0$
\[
  \IP\big[\D_1(\V^\alpha)\cap \big(B(r)\setminus B(r/2)\big)
=\emptyset\big]
   \leq 2r^{-(1+o(1))(\sqrt{b}-1)^2
\frac{2\alpha}{2-\beta}} + o(r^{-h})
\]
as $r\to \infty$.
% Then independent excursions, soft local times, just as before 
% prove that in a fixed ball the independent excursions will
% not cover all of it, then argue there are many of them, etc.
Since $b\in (1,\alpha^{-1})$ can be arbitrarily close
to~$\alpha^{-1}$ and $\beta\in (0,1)$ 
can be arbitrarily close to~$1$,
this concludes the proof of (\ref{eq_emptydisk}).
\end{proof}

\subsection{%Proof of Theorem~\ref{t_torus} 
Proofs for the cover process on the torus}

\begin{proof}[Proof of Theorem~\ref{t_torus}]
The proof of this fact parallels that of 
Theorem~2.6 of~\cite{CPV16}, with some evident
adaptations. Therefore, in the following
we only recall the main steps of the argument.
With $T_n(M)$ the hitting time of a set~$M\subset \R^2_n$ 
by the process~$X$,  
 denote for~$x\in\R^2_n$, 
\[
 h(t,x)=\IP_x\big[T_n(\B(1))>t\big]
\]
 (for simplicity in the proof we will omit the 
notation~$\Upsilon_n$ for the projection from~$\R^2$ to~$\R^2_n$,
 starting with 
writing~$\B(1)$ instead of $\Upsilon_n \B(1)$ in this display).
The Brownian motion~$\tX$ on the torus conditioned on not hitting
 the unit ball by time $t_\alpha$ can be defined in an
elementary manner by conditioning by a non-negligible event.
%, but it is better to mimic definition~\eqref{df_hat_p}.
  Consider the time-inhomogeneous diffusion~$\tX$ 
with the transition densities from time~$s$ to time $t>s$:
\begin{equation}
\label{df_tilde_X}
\tilde{p}(s,t,x,y) = \tilde{p}_0(t-s,x,y)
\frac{h(t_\alpha-t,y)}{h(t_\alpha-s,x)},
\end{equation}
where~$\tilde{p}_0$ is the transition density of~$X$
killed on hitting~$\B(1)$. This formula is similar to \eqref{df_hat_p}. %(Actually, $\tilde{p}_0(x,\cdot)={p}_0(x,\cdot)$ for $x \in \B(1)$.)
Denote $\widetilde{T}^{(s)}_n(A)=\inf\{t\geq s: {\widetilde X}_t\in A\}$.
Analogously to \eqref{eq:densityShanghai}, %the last display in the proof of Lemma~\ref{l_relation_W_hatW},
we can compute the Radon-Nikodym derivative of the law of $\tX$ on the time-interval $[s, \widetilde{T}^{(s)}_n(\partial \B(n/3))]$ given $\tX_s=x$ with respect to that of $\hW$ 
on $[0, \htau(n/3)]$ started at $x$,
\begin{equation}
\label{RN_torus}
 \frac{d \IP\big[\tX_{[s,\widetilde{T}^{(s)}_n(\partial \B(\frac n3))]}
 \in \cdot \vert \tX_s=x \,\big]}
{d \IP_x\big[\hW_{[0,\htau(\frac n3)]}\in \cdot\,\big]} 
=  \frac{\ln\big(\frac{n}{3\ln n}\big)}{\ln (\frac{n}{3})}
\times \frac{h\big(t_\alpha-\widetilde{T}^{(s)}_n(\partial \B(\frac n3)),
\tX_{\widetilde{T}^{(s)}_n (\partial \B(\frac n3))}\big)}{h(t_\alpha-s,x)},
\end{equation}
for any $x\in \partial \B(\frac{n}{3\ln n})$
(see also~(92) of~\cite{CPV16}).
%\textbf{(explain better why this is a conditional BM?)}

Next, for a large~$C$
abbreviate $\delta_{n,\alpha} = C\alpha\sqrt{\frac{\ln\ln n}{\ln n}}$ 
and 
\[
 I_{\delta_{n,\alpha}} = \Big[(1-\delta_{n,\alpha})
\frac{2\alpha\ln^2 n}{\ln\ln n},
    (1+\delta_{n,\alpha})\frac{2\alpha\ln^2 n}{\ln\ln n}\Big].
\]
Let~$N_\alpha$ be the number of Brownian motion's excursions 
between $\partial \B\big(\frac{n}{3\ln n}\big)$ 
and $\partial \B(n/3)$ on the torus,
up to time~$t_\alpha$.
It is well known that
 $\IP\big[\B(1)\cap \X_{t_\alpha}^n =\emptyset \big]
 =n^{-2\alpha+o(1)}$
(see e.g.~\cite{BK14}).
%see e.g.\ (1.6)--(1.7) in \cite{CGPV}. 
Then,
observe that~\eqref{eq_excursions_torus_g} implies that
\begin{align*}
 \IP\big[N_\alpha\notin I_{\delta_{n,\alpha}} \; \big|\;
 \B(1)\cap \X_{t_\alpha}^{(n)}=\emptyset\big] 
\leq \frac{\IP[N_\alpha\notin I_{\delta_{n,\alpha}}]}
{\IP[\B(1)\cap \X_{t_\alpha}^{(n)}=\emptyset]} 
\leq n^{2\alpha+o(1)}
 \times n^{-C'\alpha^2},
\end{align*}
where~$C'$ is a constant that can be made arbitrarily
large by making the constant~$C$ in the definition of~$\delta_{n,\alpha}$
 large enough.
So, if~$C$ is large enough, for some $c''>0$ it holds that
\begin{equation}
\label{cond_numb_exc}
\IP\big[N_\alpha\in I_{\delta_{n,\alpha}} \; 
\big|\; \B(1)\cap \X_{t_\alpha}^{(n)}=\emptyset\big] 
  \geq 1 - n^{-c''\alpha}.
\end{equation}

Now, we estimate the (conditional) probability 
that an excursion hits the set~$A$. For this, observe 
that~\eqref{cond_avoid_A} 
% Lemmas~\ref{l_exit_disks}, 
% \ref{l_??} and~\ref{l_relation_W_hatW} 
implies that,
for any $x\in \partial \B\big(\frac{n}{3\ln n}\big)$
\begin{equation}
\label{eq:decadix}
 \IP_x\big[\htau_1(A)>\htau_1(\partial \B(n/3))\big]
  = 1 - \frac{\pi}{2}\capa(A)\frac{\ln\ln n}{\ln^2 n}
 \big(1+o(1)\big),
\end{equation}
see also~(84) of~\cite{CPV16}.
This is for $\hW$-excursions, but we also need a corresponding fact 
for $\tX$-excursions.
More precisely, we need to show that
\begin{equation}
\label{tX_hW_excursions}
% \lefteqn{
\IP\big[\widetilde{T}^{(s)}_n(A)
 <\widetilde{T}^{(s)}_n(\partial \B(\frac n3))
 \mid \tX_s=x\big]
=\IP_x\big[\htau(A)<\htau(\partial \B(\frac n3))\big]
\Big(1+O\Big(\frac{1}{\sqrt{\ln n}}\Big)\Big).
\end{equation}

In order to prove the above fact,
we first need the following estimate, which 
(in the discrete setting) was 
proved in~\cite{CPV16} as Lemma~4.2.
For all $\lambda \in (0,1/5)$, there exist $c_1>0, n_1 \geq 2$,
$\sigma_1>0$ (depending on~$\lambda$) such that
for all~$n \geq n_1$, $1\leq \beta\leq \sigma_1\ln n$,  
 $\|x\|,\|y\|\geq \lambda n$, 
$|r|\leq \beta n^2$ and  all~$s\geq 0$, 

\begin{equation}
\label{regularity_h}
\Big|\frac{h(s,x)}{h(s+r,y)}-1\Big| \leq \frac{c_1\beta}{\ln n} \;.
\end{equation}
The proof of the above in the continuous setting is completely analogous.
Then, the idea is to write, similarly to~(86) of~\cite{CPV16}
that
\begin{equation}
 \label{transfer_h}
h(s,x)
= \frac{\ln\big(\frac{n}{3\ln n}\big)}{\ln (\frac{n}{3})} 
\int_{\partial \B(n/3)\times\R_+} h(s-t,y)
\,d\nu(y,t)
 + \psi_{x,s,n} \ ,
\end{equation}
where 
\[
 \nu(M,T)=\IP_x\big[X_{T_n(\partial \B(\frac n3))}\in M,
T_n(\partial \B(\frac n3))\in T  \mid T_n(0)>T_n(\partial \B(\frac n3))\big]
\]
and $\psi_{x,s,n}= P_x[T_n(\partial \B(n/3))\geq T_n(0)>s]$.
Then, one uses~\eqref{transfer_h} together with~\eqref{RN_torus}
to obtain~\eqref{tX_hW_excursions} in a rather standard
way; the only obstacle in adapting the discrete
argument to the continuous setting is that~(87) of~\cite{CPV16}
does not hold for \emph{all} $x\in \R^2_n\setminus\B(1)$
(this is because~$x$ can be very close to~$\B(1)$; that fact 
would be true e.g.\ for all $x\in \R^2_n\setminus\B(2)$,
which would be, unfortunately, not enough to obtain the 
analogue of~(88) of~\cite{CPV16}). 
Note, however, that by a repeated application of~\eqref{regularity_h}
one readily obtains that
\begin{equation}
\label{improvement_CPV}
 \frac{h(s,x)}{h(s+r,y)} \leq \exp\Big(\frac{Cr}{n^2\ln n}\Big)
\end{equation}
(one could use this observation in~\cite{CPV16} as well),
which is even stronger than~(88) of~\cite{CPV16}.

Once we have~\eqref{tX_hW_excursions},
the idea is roughly that 
% heuristically,
\begin{align*}
\IP\big[\Upsilon_n A \cap 
   \X_{t_\alpha}^{(n)}=\emptyset \mid \B(1)\cap 
   \X_{t_\alpha}^{(n)}=\emptyset\big]
&{\approx} \E[ (\text{right-hand side of }\eqref{eq:decadix})^{N_\alpha}]\\
&\!\! \stackrel{\eqref{cond_numb_exc}}{\approx} \Big(1 - 
\frac{\pi\ln\ln n}{2\ln^2 n}\capa(A)\Big)^{\frac{2\alpha\ln^2 n}
{\ln\ln n}}
\\
&= \exp\big(-\pi\alpha\capa(A)(1+o(1))\big),
\end{align*}
which would show Theorem~\ref{t_torus}.
Also, one needs to take care of some extra technicalities
(in particular, excursions starting at times close to~$t_\alpha$
need to be treated separately), but the arguments of~\cite[section 4.2]{CPV16}
are quite standard
and  adapt to the continuous case \emph{mutatis mutandis}.
\end{proof}

\subsection{Proof of Theorems~\ref{t_BRI_image},~\ref{t_Phi_process},~\ref{t_Y_process},~\ref{t_Y_^process} and~\ref{t_freedisk}.}

\begin{proof}[Proof of Theorem~\ref{t_BRI_image}]
First, we need the following elementary consequence of
 the Mapping Theorem for Poisson processes 
(e.g.\ Section~2.3 of~\cite{K93}):
if $\mathcal{P}$ is a Poisson process on $\R_+$ with rate
 $r(\rho)=\frac{a}{\rho}$, then the image of~$\mathcal{P}$
under the map $g(\rho) = c \rho^h$ is a Poisson
process~$\mathcal{P}'$
with rate $r'(\rho)=\frac{ah^{-1}}{\rho}$, where~$c$ and~$h$
are positive constants.
Theorem~\ref{t_BRI_image} now follows from
%  Lemma~\ref{l_image_Poisson}
% and 
the fact that a conformal image of a Brownian trajectory
is a Brownian trajectory (the fact that we are dealing 
with a conditioned Brownian motion does not change
anything due to Lemma~\ref{l_conn_W_hW}).
\end{proof}

\begin{proof}[Proof of Theorem~\ref{t_freedisk}]
We write
\begin{align*}
 \IP\Big[\frac{2\alpha\ln^2\|x\|}{\ln \Phi_x(\alpha)^{-1}}>s\Big] 
&=
\IP\big[\Phi_x(\alpha) 
> r_s
%\exp\Big(-\frac{2\alpha\ln^2\|x\|}{s}\Big)
\big]\\
 &= \IP\big[\B(x,r_s) \subset \V^\alpha \big]\\
 &= \exp\big(-\pi\alpha\capa\big(\B(x,r_s)\cup 
 \B(1)\big)\big),
\end{align*}
 and an application of
Lemma~\ref{l_cap_distantdisk}~(iii)
 concludes the proof of the first part.
For the boundary case $x\in\partial\B(1)$, 
using Lemma~\ref{l_cap_blister} we obtain
\begin{align*}
 \IP\big[\alpha \Phi_x(\alpha)^2>s\big] 
&=  \IP\Big[\Phi_x(\alpha)>\sqrt{\frac{s}{\alpha}}\Big]\\ 
&=  \IP\Big[\B\Big(x,\sqrt{\frac{s}{\alpha}}\Big)\subset\V^\alpha\Big]\\
&=\exp \Big(-\alpha\times \frac{s}{\alpha}
+O\big((s/\alpha)^{3/2}\big)\Big)\\
&= e^{-s}\big(1+O\big((s/\alpha)^{3/2}\big)\big),
\end{align*}
which concludes the proof.
\end{proof}
%%%

\begin{proof}[Proof of Theorem~\ref{t_Phi_process}]
Note that the proof of~(ii) follows directly from
the construction, since $\Phi_0(\alpha)=\rho_1^\alpha$.
%We prove only~(ii), the proof of (i) is analogous.
The fact that the process $(\Phi_x (\alpha) , \alpha>0)$ is Markovian
immediately follows from~\eqref{superp_BRI}.
Let~$G_{\alpha,\delta}$ be the event that there is a jump
in the interval $[\alpha,\alpha+\delta]$.
Then, to compute the jump rate of~$\Phi_x$
given that $\Phi_x(\alpha)=r$, observe that,
by~\eqref{superp_BRI}, \eqref{capa_disk}, and~\eqref{eq_vacant_Bro}
\begin{align*}
 \lim_{\delta\to 0}\frac{\IP\big[
 %\text{there is a jump in } [\alpha,\alpha+\delta]
 G_{\alpha,\delta}\big]}{\delta} &=
  \lim_{\delta\to 0}\frac{\IP\big[\Phi_x(\delta) < r\big]}{\delta}\\
  &=  \lim_{\delta\to 0}\frac{1-
  \IP\big[\B(x,r)\subset\V^\delta\big]}{\delta}\\
  &= \lim_{\delta\to 0}\frac{1-\exp\big(-\pi\delta
   \capa \big(\B(1)\cup B(x,r)\big)\big)}{\delta}\\
 % &= \lim_{\delta\to 0}\frac{1-\exp(-2\delta\ln r)}{\delta}\\
  &= \pi\capa \big(\B(1)\cup B(x,r)\big).
\end{align*}
Moreover, conditioned on~$G_{\alpha,\delta}$,
we have for %$\gamma\geq 1$
$s<r$
\[
\IP[V^{(x,r)}<s \mid G_{\alpha,\delta}]
=\frac{1-\IP[\B(s)\subset\V^\delta]}{\IP[G_{\alpha,\delta}]}
=\frac{\pi\delta\capa \big(\B(1)\cup B(x,s)\big)(1+o(1))}
{\pi\delta\capa \big(\B(1)\cup B(x,r)\big)(1+o(1)) },
\]
and we conclude the proof by sending~$\delta$ to~$0$.
% The proof of part~(ii) is completely analogous.
\end{proof}
%%%%%%

\begin{proof}[Proof of Theorem~\ref{t_Y_^process}] 
For a fixed $x$ denote by  $k_x: \R^+ \to \R^+$  the non-decreasing function  $k_x(r)= \capa\big(\B(1)\cup\B(x,r)\big)$. 
%The function $k_x$ is non-decreasing.
% -- and in fact, it is smooth, e.g. in the boundary case $\|x\|=1$, 
%see    Lemma \ref{l_cap_blister}.  % and Remark   \ref{rem:cap_blister}. 
 From Theorem \ref{t_Phi_process} we obtain that
the process $\Phi_x(\alpha)$ is a pure jump Markov process with generator given by
\[
{\cal L}f(r)= \pi \int_0^r [f(s)-f(r)] d k_x(s)\;,
\] 
for $f: [\max\{ 1-\|x\|, 0\}, +\8) \to \R$. 
We define the mappings ${\mathfrak R}_{x,\beta}: \R \to \R$ by
\[
{\mathfrak R}_{x,\beta} (y) =
\begin{cases}
 \exp\big( - e^{-\{y-\beta-\ln(2 \ln^2 \|x\|)\}}\big) ,
& \text{for } \|x\|>1,  \\
 \exp\big(\frac12 (y-\beta) \big) ,
&   \text{for } \|x\|=1     \\
1-\|x\| + \big( \frac{4\sqrt 2}{3 \pi^2} (\frac{1-\|x\|}{\|x\|})^{1/2} e^{y-\beta}\big)^{2/3},
%
%
%
%\frac{\|x\|( 1-\|x\|)}2 e^{y-\beta} 
 &  \text{for } \|x\| \in (0,1).  
\end{cases}
\]
Note that ${\mathfrak R}_{x,\beta}$ are
 increasing and one-to-one with that 
${\mathfrak R}_{x,\beta}^{-1}(\Phi_x(e^\beta))= Y_x^{out}(\beta)$, 
$ Y_x^{\6}(\beta)$ or $ Y_x^{in}(\beta)$ 
defined in~\eqref{def:Y_^process},
 according to $x \notin \B(1), x \in \6 \B(1)$ or $0<\|x\|<1$.
Thus, all these three processes are (time inhomogeneous) 
Markov processes, with generators given by 
\[
{\cal L}_{x, \beta}f(y)= f'(y) +\pi \int_{-\8}^y [f(z)-f(y)] e^\beta dk_x( {\mathfrak R}_{x,\beta}(z))
\]
 for smooth $f: \R \to \R$. 
Now, the above particular choices 
of $ {\mathfrak R}_{x,\beta}(\cdot)$ are to ensure that, 
in each of the three different cases,
% \begin{eqnarray} \nn
\[
\lim_{\beta \to \8} \pi e^\beta k_x( {\mathfrak R}_{x,\beta}(y))  = e^y
\]
% \end{eqnarray}
uniformly on compacts. The convergence follows from 
Lemmas~\ref{l_cap_distantdisk}~(iii), \ref{l_cap_blister},
 and~\ref{l_cap_interior}.
Thus, for the generators themselves we have for fixed~$f$
\begin{equation} \nn
\lim_{\beta \to \8}
{\cal L}_{x, \beta}f(y)={\cal L}f(y)
\end{equation}
uniformly on compacts. With this to hand, we follow the standard 
scheme of compactness and identification of the limit for 
convergence: (i) the family of processes indexed by 
$\beta_w$ is tight in the Skorohod space 
${\mathbb D}(\R_+, \R)$; 
(ii) the limit is solution of the martingale problem associated 
to the generator~$\mathcal L$, which is uniquely determined.
It is not difficult to check that
\begin{equation}\nn
\sup_{\beta \geq 0} e^\beta    k_x( {\mathfrak R}_{x,\beta}(y))  \longrightarrow 0\qquad {\rm as} \; y \to -\8 \;.
\end{equation}
Then we can apply Theorem 3.39 of Chapter IX in~\cite{JS87} 
in order to obtain tightness (the assumptions can be checked 
as in the proof of Theorem 4.8 of Chapter IX in~\cite{JS87} 
which deals with time-homogeneous processes whereas we have here 
a weak inhomogeneity; tightness of the 1-dimensional marginal 
follows from Theorem~\ref{t_freedisk}, that we prove independently, 
for the cases $\|x\|>1$ and $\|x\|=1$; the last case is similar).  
This concludes the proof of 
 convergence of $Y^{out}(\beta_w + \cdot)$, $Y^{\6}(\beta_w + \cdot)$
  and $Y^{in}(\beta_w + \cdot)$ to $Y$ as $\beta_w \to \8$.
\end{proof}

%
%{\bf ICI}
%\fc
\begin{proof}[Proof of Theorem  \ref{t_Y_process}] 
The Markov process $\Phi_0(\alpha)$ described by (ii) in Theorem~\ref{t_Phi_process} is transformed by the change of variables 
$\alpha=e^\beta, y= \ln \ln r + \beta + \ln 2$, or equivalently,
$r=\exp(e^{y-\beta}/2)$, into a Markov process $Y(\beta)$. Indeed, for the new process the jump rate becomes
\[
2 \times \frac{e^{y-\beta}}{2} 
\times \frac{d  \alpha}{d  \beta} = e^y\;,
\]
 and the evolution is given by 
\begin{equation} 
\nn
Y(\beta+ h) = 
\begin{cases}
 Y(\beta) + h,  & \text{with probability }  1-e^yh+o(h),  \\
 Y(\beta) + \ln U +h, &   \text{with probability }  e^yh+o(h) ,
\end{cases}
\end{equation}
where $U$ is an independent Uniform$[0,1]$ random variable. 
Since $-\ln U$ is an Exp(1) variable,
the generator of $Y$ is given by $\cal L$. 

The adjoint generator ${\cal L}^*$ is given by
\[
{\cal L}^* g (y) = - g'(y) + e^y \int_y^{+\8} g(z) dz - e^y g(y) \;.
\]
The negative of a Gumbel variable has density% $g(y)= \exp( y-e^{-y})$,
 $g(y)= \exp( y-e^{y})$,
we easily check that ${\cal L}^* g (y)=0$. 
Hence this law is invariant. 
\end{proof}

\section*{Appendix}

\begin{proof}[Proof of Proposition~\ref{p_prop_gx2}.]
Let $a=\|x\|$. Note that,   
\[
 \frac{1}{2\pi}\int_0^\pi \ln (a^2+1-2a\cos\theta)\,d\theta -\ln a =
   \frac{1}{2\pi}\int_0^\pi \ln (1+a^{-2}-2a^{-1}\cos\theta)\,d\theta,
\]
and that the claim is obviously valid for~$a=0$,
so it remains to prove that
\begin{equation}
\nn
% \label{I(a)}
 I(a):= \int_0^\pi \ln (a^2+1-2a\cos\theta)\,d\theta = 0
 \qquad \text{ for all } a\in(0,1].
\end{equation}
By the change of variable $\theta\to \pi-\theta$ we 
find that $I(a)= \int_0^\pi \ln (a^2+1+2a\cos\theta)\,d\theta$,
and so
\begin{align} \label{eq:I(a)}
  I(a) &= \frac{1}{2} \int_0^\pi 
 \ln \big((a^2+1)^2-4a^2\cos^2\theta\big)\,d\theta\\
 &= \int_0^{\pi/2} 
 \ln \big((a^2+1)^2-4a^2\cos^2\theta\big)\,d\theta.
 \nn
\end{align}
Then, using the same trick as above (change the variable
$\theta\to \frac{\pi}{2}-\theta$ so that the cosine becomes sine), 
we find
\begin{align*}
 I(a) &=  \frac{1}{2} \int_0^{\pi/2}
\ln \big[\big((a^2+1)^2-4a^2\cos^2\theta\big)
  \big((a^2+1)^2-4a^2\sin^2\theta\big) \big]\,d\theta\\
 &=  \frac{1}{2} \int_0^{\pi/2} 
\ln \big[(a^2+1)^4 - 4a^2(a^2+1)^2 + 16a^4\cos^2\theta\sin^2\theta \big]
  \,d\theta\\
  &=  \frac{1}{2} \int_0^{\pi/2}
\ln  \big[(a^2+1)^4 - 4a^2(a^2+1)^2 + 4a^4\sin^2 2\theta \big]
  \,d\theta\\
    &=  \frac{1}{2} \int_0^{\pi/2}
\ln  \big[(a^2+1)^4 - 4a^2(a^2+1)^2 + 4a^4 - 
   4a^4\cos^2 2\theta \big]
  \,d\theta\\
      &=  \frac{1}{2} \int_0^{\pi/2}
\ln  \big[\big((a^2+1)^2 - 2a^2\big)^2 - 
   4a^4\cos^2 2\theta \big]
  \,d\theta\\
        &=  \frac{1}{2} \times \frac{1}{2}\int_0^\pi
\ln  \big[(a^4+1)^2  - 
   4a^4\cos^2 \theta \big]
  \,d\theta
\end{align*}
using \eqref{eq:I(a)}, and we finally arrive to the following identity:
\begin{equation} 
\label{magic_identity}
I(a) = \frac{I(a^2)}{2}.
\end{equation}
This implies directly that $I(1)=0$; for $a<1$ 
just iterate~\eqref{magic_identity} and use the obvious fact
that $I(\cdot)$ is continuous at~$0$.
\end{proof}

We have to mention that
other proofs are available as well; see~\cite[Ch.~20]{Chen10}.

\section*{Acknowledgements}
The authors thank Christophe Sabot for helping with the
rigorous definition of the process~$\RR$ starting from~$\RR_0=1$,
and Alexandre Eremenko for helping with 
the proof of Lemma~\ref{l_cap_blister}.
The work of S.P.\ was partially supported by
CNPq (grant 300886/2008--0) and FAPESP (grant 2017/02022--2). 
The work of F.C.\ was partially supported by CNRS (LPSM, UMR 8001). 
Both of us have beneficiated from support of Math Amsud programs 15MATH01-LSBS and 19MATH05-RSPSM.

\small{

}


\begin{thebibliography}{16}

\bibitem{Abe17}
 \textsc{Y.~Abe} (2017)
Second order term of cover time for planar simple random walk
arXiv 1709.08151


\bibitem{baccelli07}
\textsc{F.~Baccelli, K.B.~Kim, D.~McDonald} (2007)
Equilibria of a class of transport equations arising in congestion control.
\textit{Queueing Systems}
\textbf{55} (1), 1--8.

\bibitem{baccelli09}
\textsc{F.~Baccelli, G.~Carofiglio, M.~Piancino} (2009)
Stochastic Analysis of Scalable TCP.
\textit{Proceedings IEEE INFOCOM 2009}, 19--27.



\bibitem{BK14}
\textsc{D.~Belius, N.~Kistler} (2017)
The subleading order of two dimensional cover times.
\textit{Probab.\ Theory Relat.\ Fields} \textbf{167} (1), 
461--552.

 \bibitem{BRZ17} 
 \textsc{D.~Belius, J.~Rosen, O.~Zeitouni} (2017)
 Tightness for the cover time of compact two dimensional manifolds.
arXiv 1711.02845

\bibitem{BGP16}
\textsc{D.F.~de Bernardini, C.F.~Gallesco, S.~Popov} (2018)
On uniform closeness of local times of Markov chains 
and i.i.d.\ sequences.
\textit{Stochastic Process.\ Appl.} \textbf{128} 3221--3252.
% arXiv:1610.02532;


\bibitem{CDarcyP16} \textsc{D.~Camargo, S.~Popov} (2018)
One-dimensional random interlacements.
% arXiv:1608.01016;
 \textit{Stochastic Process.\ Appl.} \textbf{128}, 2750--2778. 

% \bibitem{CR86}
% \textsc{J.~Chen, H.~Rubin} (1986) 
% Bounds for the difference between median and mean of 
% Gamma and Poisson distributions.
% \textit{Statist.\ Probab.\ Let.} \textbf{4} (6), 281-283.

% \bibitem{CT12} \textsc{J.~\v{C}ern\'y, A.~Teixeira} (2012)
%   From random walk trajectories to random interlacements. 
% Ensaios Matem\'aticos [Mathematical Surveys] \textbf{23}. 
% Sociedade Brasileira de Matem\'atica, Rio de Janeiro.

\bibitem{Chen10} \textsc{H.~Chen} (2010)
\textit{Excursions in Classical Analysis.}
Math.\ Association of America.

\bibitem{ChetriteTouchette}
\textsc{R.~Chetrite, H.~Touchette} (2015) 
Nonequilibrium Markov processes conditioned on large deviations. 
\textit{Ann.\ Inst.\ Henri Poincar\'e\ B, Probab.\ \& Stat.}
 \textbf{16},  2005--2057.

\bibitem{CT12} \textsc{J.~\v{C}ern\'y, A.~Teixeira} (2012)
  From random walk trajectories to random interlacements. 
Ensaios Matem\'aticos [Mathematical Surveys] \textbf{23}. 
Sociedade Brasileira de Matem\'atica, Rio de Janeiro.

\bibitem{CGPV13} \textsc{F.~Comets, C.~Gallesco, S.~Popov,
M.~Vachkovskaia} (2013)
On large deviations for the cover time of two-dimensional torus. 
\textit{Electr.\ J.\ Probab.}, \textbf{18}, article 96. 

\bibitem{CP16} 
\textsc{F.~Comets, S.~Popov} (2017)
The vacant set of two-dimensional critical
 random interlacement is infinite.
 \textit{Ann.\ Probab.} \textbf{45}, 4752--4785. 
 
\bibitem{CPV16} \textsc{F.~Comets, S.~Popov, M.~Vachkovskaia} (2016)
Two-dimensional random interlacements and late points 
for random walks.
\textit{Commun.\ Math.\ Phys.} \textbf{343}, 129--164.

\bibitem{DPRZ04} 
\textsc{A.~Dembo, Y.~Peres, J.~Rosen, O.~Zeitouni} (2004)
Cover times for Brownian motion and random walks in two dimensions. 
\textit{Ann.\ Math.\ (2)} \textbf{160} (2), 433--464.

 \bibitem{DPRZ06} 
 \textsc{A.~Dembo, Y.~Peres, J.~Rosen, O.~Zeitouni} (2006)
 Late points for random walks in two dimensions. 
 \textit{Ann.\ Probab.} \textbf{34} (1), 219--263.

% \bibitem{D12} \textsc{J.~Ding} (2012)
% On cover times for 2D lattices. \textit{Electr.\ J.\ Probab.} \textbf{17}
% (45), 1--18.

\bibitem{DD15} 
 \textsc{S.~Dereich, L.~D\"oring} (2015)
 Random interlacements via Kuznetsov measures.
arXiv:1501.00649

\bibitem{Doob84}
\textsc{J.~L.~Doob} (1984)
{Classical Potential Theory and Its Probabilistic Counterpart}.
{Springer}.
% Chapter X of DoobÕs book [Doob84] about conditional Brownian motions

\bibitem{Doob57}
\textsc{J.~L.~Doob} (1957)
Conditional Brownian motion and the boundary limits of harmonic functions. 
\textit{Bull. Soc. Math. France} \textbf{85}, 431--458.

\bibitem{DRS14} \textsc{A.~Drewitz, B.~R\'ath, A.~Sapozhnikov} (2014)
\textit{An introduction to random interlacements}.
Springer.
 
 \bibitem{GdH14} \textsc{J.~Goodman, F.~den~Hollander} (2014)
 Extremal geometry of a Brownian porous medium. 
 \textit{Probab.\ Theory\ Relat.\ Fields.} \textbf{160}
 (1-2), 127--174.

% \bibitem{HNPS16}
% \textsc{N.~Holden, S.~Nacu, Y.~Peres, T.~Salisbury} (2016)
% How round are the complementary components of planar Brownian motion?
% arXiv:1609.06627, to appear  \textit{ Ann. Inst. H. Poincar\'e}

\bibitem{JS87}
\textsc{J.~Jacod, A.N.~Shiryaev } (2003)
\textit{Limit theorems for stochatic processes.} Grundlehren der mathematischen Wissenschaften, Springer-Verlag, Berlin, Heidelberg.


\bibitem{K93}
\textsc{J.F.C.~Kingman} (1993)
\textit{Poisson processes.} 
Oxford University Press, New York.
 
\bibitem{LL10} 
\textsc{G.~Lawler, V.~Limic} (2010)
\textit{Random walk: a modern introduction.} 
Cambridge Studies in Advanced Mathematics, \textbf{123}. 
Cambridge University Press, Cambridge.

 \bibitem{LSW03} \textsc{G.F.~Lawler, O.~Schramm, W.~Werner} (2003)
 Conformal restriction: the chordal case.
 \textit{J.\ Amer.\ Math.\ Soc.} \textbf{16} (4), 917--955.

\bibitem{LW04} \textsc{G.F.~Lawler, W.~Werner} (2004)
The Brownian loop soup.
\textit{Probab.\ Theory Relat.\ Fields} \textbf{128} (4), 565--588.

\bibitem{LeGall} \textsc{J.F.~Le Gall} (1992).
Some properties of planar Brownian motion. Ecole d'\'Et\'e de Probabilit\'es de Saint-Flour X--1990. 
Lecture Notes in Math., \textbf{1527}, pp. 111--235, 
Springer, Berlin.

\bibitem{LeJanSF}
\textsc{Y.~Le Jan} (2011)
\textit{Markov paths, loops and fields}
(Saint-Flour Probability Summer School 2008).
Lect.\ Notes  Math.\ \textbf{2026}, 
 Springer.

\bibitem{Li16} \textsc{X.~Li} (2016)
Percolative properties of Brownian interlacements and its vacant set.
arXiv:1610.08204

\bibitem{LS15} \textsc{X.~Li, A.-S.~Sznitman} (2015)
Large deviations for occupation time profiles of random interlacements.
\textit{Probab.\ Theory Relat.\ Fields} \textbf{161} (1--2), 309--350.

% \bibitem{LL10} 
% \textsc{G.~Lawler, V.~Limic} (2010)
% \textit{Random walk: a modern introduction.} 
% Cambridge Studies in Advanced Mathematics, \textbf{123}. 
% Cambridge University Press, Cambridge.

% \bibitem{LPW}
% \textsc{D.~Levin, Y.~Peres, E.~Wilmer} (2009)
% \textit{Markov chains and mixing times.} 
% American Mathematical Society.

\bibitem{MP10}
\textsc{P.~M\"orters, Y.~Peres} (2010)
\textit{Brownian Motion.}
Cambridge University Press, Cambridge.

\bibitem{PY96}
\textsc{J.~Pitman, M.~Yor} (1996)
Decomposition at the maximum for excursions
and bridges of one-dimensional diffusions.
In: \textit{It\^o's Stochastic Calculus and Probability Theory}
(N.~Ikeda, S.~Watanabe, M.~Fukushima, H.~Kunita, eds.).
Springer.

\bibitem{SLT} \textsc{S.~Popov, A.~Teixeira} (2015)
Soft local times and decoupling of random interlacements. 
\textit{J.\ European Math. Soc.} \textbf{17} (10), 2545--2593.

\bibitem{Ransford95} \textsc{T.~Ransford} (1995) 
\textit{Potential theory in the complex plane}.
Cambridge University Press, Cambridge.
%
%  log potential theory in plane, Frostman Th. 3.3.4 
%
%

\bibitem{Rodr17} \textsc{P.-F.~Rodriguez} (2017) 
 On pinned fields, interlacements, and random walk on $(\Z/N\Z)^2$.
    arXiv:1705.01934 
To appear in: \textit{Probab.\ Theory Relat.\ Fields}

\bibitem{R14} \textsc{J.~Rosen} (2014)
Intersection local times for interlacements. 
\textit{Stochastic Process.\ Appl.} \textbf{124} (5), 1849--1880. 

\bibitem{RoynetteYor} 
\textsc{B.~Roynette, M.~Yor} (2009)
\textit{Penalising brownian paths}. Lect. Notes Math. 1969, 
Springer Science \& Business Media.


\bibitem{Szn10} \textsc{A.-S.~Sznitman} (2010)
Vacant set of random interlacements and percolation.
\textit{Ann.\ Math.\ (2)}, \textbf{171} (3), 2039--2087.

\bibitem{Szn12}  \textsc{A.-S.~Sznitman} (2012)
\textit{Topics in occupation times and Gaussian free fields.} 
Zurich Lect. Adv. Math., European Mathematical Society, Z\"urich.

% \bibitem{Szn14}  \textsc{A.-S.~Sznitman}
% Disconnection, random walks, and random interlacements.
% arXiv:1412.3960; \textit{Probab.\ Theory\ Relat.\ Fields.},
% to appear.

\bibitem{Szn13} \textsc{A.S.~Sznitman} (2013)
On scaling limits and Brownian interlacements.
\textit{Bull.\ Braz.\ Math.\ Soc.\ (N.S.)} \textbf{44} (4), 555--592.

\bibitem{T09} \textsc{A.~Teixeira} (2009)
Interlacement percolation on transient weighted graphs.
\textit{Electr.\ J.\ Probab.} \textbf{14}, 1604--1627. 


\bibitem{Wer94} \textsc{W.~Werner} (1994)
Sur la forme des composantes connexes du comp\-l\'e\-men\-taire 
de la courbe brownienne plane.
\textit{Probab.\ Theory Relat.\ Fields} \textbf{98} (3), 307--337.

\bibitem{Will74} \textsc{D.~Williams} (1974)
 Path decomposition and continuity of local time for one-dimensional diffusions. I.
 \textit{Proc.\ London Math.\ Soc.} \textbf{28} (3), 738--768.

\bibitem{W12}
\textsc{H.~Wu} (2012)
On the occupation times of Brownian excursions and Brownian loops.
\textit{S\'eminaire de Probabilit\'es} \textbf{  XLIV} (Lect. Notes Math. 2046) 149--166.


\end{thebibliography}
\end{document}